\DeclareRobustCommand*{\mfaktor}[3][]
{
   { \mathpalette{\mfaktor@impl@}{{#1}{#2}{#3}} }
}
\newcommand*{\mfaktor@impl@}[2]{\mfaktor@impl#1#2}
\newcommand*{\mfaktor@impl}[4]{
   \settoheight{\faktor@zaehlerhoehe}{\ensuremath{#1#2{#3}}}%
   \settoheight{\faktor@nennerhoehe}{\ensuremath{#1#2{#4}}}%
      \raisebox{-0.5\faktor@zaehlerhoehe}{\ensuremath{#1#2{#3}}}%
      \mkern-4mu\diagdown\mkern-5mu%
      \raisebox{0.5\faktor@nennerhoehe}{\ensuremath{#1#2{#4}}}%
}
\newcounter{notes}%
\newtheorem{cor}{Corollary}[section]
\newtheorem{prop}[cor]{Proposition}
\newtheorem{Theorem}[cor]{Theorem}
\newtheorem{Proposition}[cor]{Proposition}
\newtheorem{Corollary}[cor]{Corollary}
\newtheorem{introthm}{Theorem}
\newtheorem*{rep@theorem}{\rep@title}
\newcommand{\newreptheorem}[2]{%
\newenvironment{rep#1}[1]{%
 \def\rep@title{#2 \ref{##1}}%
 \begin{rep@theorem}}%
 {\end{rep@theorem}}}
\theoremstyle{definition}
 \newtheorem{Definition}[cor]{Definition}
 \newtheorem{Example}[cor]{Example}
\newtheorem{Remark}[cor]{Remark}
\newtheorem{remark}[cor]{Remark}
\theoremstyle{plain}
\def\co{\colon\thinspace}
\newcommand{\bR}{{\mathbf R}}
\newcommand{\bC}{{\mathbf C}}
\newcommand{\bQ}{{\mathbf Q}}
\newcommand{\bH}{{\mathbf H}}
\newcommand{\bZ}{{\mathbf Z}}
\newcommand{\bN}{{\mathbf N}}
\newcommand{\HH}{{\mathbb H}}
\newcommand{\N}{{\mathbb N}}
\newcommand{\Z}{{\mathbb Z}}
\newcommand{\SU}{\mathsf{SU}}
\newcommand{\Sp}{\mathsf{Sp}}
\newcommand{\PSL}{\mathsf{PSL}}
\newcommand{\U}{\mathsf{U}}
\newcommand{\SO}{\mathsf{SO}}
\newcommand{\GL}{\mathsf{GL}}
\newcommand{\bpm}{\begin{pmatrix}}
\newcommand{\epm}{\end{pmatrix}}
\newcommand{\XG}{\mathcal X_{\sf G}} 
\newcommand{\Heis}{{\rm Heis}}
\newcommand{\ax}{{\rm Axis}}
\let\oldtocsection=\tocsection
\let\oldtocsubsection=\tocsubsection
\let\oldtocsubsubsection=\tocsubsubsection
\renewcommand{\tocsection}[2]{\hspace{0em}\oldtocsection{#1}{#2}}
\renewcommand{\tocsubsection}[2]{\hspace{1em}\oldtocsubsection{#1}{#2}}
\renewcommand{\tocsubsubsection}[2]{\hspace{2em}\oldtocsubsubsection{#1}{#2}}
   \def\MR#1{}
\DeclarePairedDelimiter\floor{\lfloor}{\rfloor}
\newcommand{\para}[1]{%
  \par
  \addvspace{\medskipamount}
  \textbf{#1\@addpunct{.}}\enspace\ignorespaces
}
\begin{document}

\title[Geometric Limits]{Geometric limits of cyclic subgroups of $\SO_0(1, k+1)$ and $\SU(1, k+1)$}

\author[S. Maloni]{Sara Maloni}
\address{SM: Department of Mathematics, University of Virginia}
\email{sm4cw@virginia.edu}
\urladdr{https://sites.google.com/view/sara-maloni}

\author[M.B. Pozzetti]{Maria Beatrice Pozzetti}
\address{MBP: Department of mathematics, University of Heidelberg}
\email{pozzetti@mathi.uni-heidelberg.de}
\urladdr{www.mathi.uni-heidelberg.de/$\sim$pozzetti}

\thanks{Maloni was partially supported by grant DMS-1506920, DMS-1650811, DMS-1839968 and DMS-1848346 from the National Science Foundation. Pozzetti acknowledges funding by the Deutsche Forschungsgemeinschaft (DFG, German Research Foundation)--427903332.
The authors also acknowledge support from U.S. National Science Foundation grants DMS-1107452, 1107263, 1107367 ``RNMS: GEometric structures And Representation varieties'' (the GEAR Network).}

\date{\today}

\begin{abstract}
We study geometric limits of convex-cocompact cyclic subgroups of the rank 1 groups $\SO_0(1, k+1)$ and $\SU(1, k+1)$. We construct examples of sequences of subgroups of such groups $\sf G$ that converge algebraically and whose geometric limit strictly contains the algebraic limit, thus generalizing the example first described by J{\o}rgensen for subgroups of $\SO_0(1,3)$. We also give necessary and sufficient conditions for a subgroup of $\SO_0(1, k+1)$ to arise as geometric limit of a sequence of cyclic subgroups. We then discuss generalizations of such examples to sequence of representations of free groups, and applications of our constructions in that setting.
\end{abstract}

\maketitle

\tableofcontents

\section{Introduction}\label{intro}
\addtocontents{toc}{\protect\setcounter{tocdepth}{1}}
Given a finitely generated torsion free group $\Gamma$, and a  Lie group $\sf G$, we can consider the representation variety $\mathcal{R} = \mathcal{R}(\Gamma, \sf G) := \mathrm{Hom}(\Gamma, \sf G)$, which is the set of homomorphisms from $\Gamma$ into $\sf G$. In this variety, an important subset is $\mathcal{DF} = \mathcal{DF}(\Gamma, \sf G)$, the subset of faithful representations with discrete image. The set $\mathcal{DF}$, as a subset of $\mathcal{R}$, inherits its \textit{algebraic topology}, that is the topology of pointwise convergence. On the other hand, $\mathcal{DF}$ can also be considered as a subset of the set of closed subgroups of $\sf G$ and so it can be endowed with the Chabauty (or compact-open) topology. This induces the \textit{geometric topology} on $\mathcal{DF}$.  The relationship between these two topologies has been investigated for many years, above all in the case of ${\sf G} = \PSL(2, \bC)$. In this article we extend some of the results known for representations into $\PSL(2, \bC)$ to representations into a more general Lie group $G$ of rank one, such $\SO_0(1, k+1)$ and $\SU(1, k+1)$.
 
 In the following discussion, we will assume the sequence $\left(\rho_n\right)_{n \in \bN}$ in $\mathcal{DF}$ converges algebraically to $\rho_\infty$ and $\left(\rho_n(\Gamma)\right)_{n \in \bN}$ converges geometrically to $\Gamma_G$. In this case one easily sees that $\rho_\infty(\Gamma)$ is a subgroup of $\Gamma_G$. If $\rho_\infty(\Gamma) = \Gamma_G$, we say that the convergence is strong. J{\o}rgensen \cite{jor_onc} was the first to show examples of sequences of representations from $\Gamma = \bZ$ into ${\sf G} = \SO_0(1, 3)$ such that $\rho_\infty(\Gamma)$ is a proper subgroup of $\Gamma_G$. 
We aim at investigating to what extent this theory survives for Lie groups different than $\SO_0(1,3)$. In this article we will concentrate on the two Lie groups of rank one that have a more concrete and workable model: $\SO_0(1, k+1)$ and $\SU(1, k+1)$. We plan to discuss analogue results for general Lie groups, including those of higher rank, in an upcoming paper \cite{MP2}, which will use much more heavily a Lie theoretic setup. In \cite{MP2} we will be specifically interested in understanding limits of Anosov representations, which can be considered as generalization of convex-cocompact representations when the target group has higher real rank. Convex-cocompact cyclic representations $\rho\co\bZ\to \sf G$ for rank one groups $\sf G$ like $\SO_0(1, k+1)$ and $\SU(1, k+1)$ corresponds to representations such that $\rho(1)$ is loxodromic, and for Anosov representations each element is, in particular, loxodromic. This is why for many of the results here we restrict to loxodromic elements, an assumption that considerably simplifies the discussion. On the other hand all of the results presented here can be discussed for the more general setting of discrete and faithful representations admitting parabolics. We will point out through the paper what are the correct generalizations to that more general setting. See, for example, Remark \ref{r.parabolic1}.

The first result of this paper is a generalization of J\o rgensen's work.
\begin{introthm}\label{t.INTRO1}
	Let ${\sf G_1} = \SO_0(1, k+1)$ and ${\sf G_2} = \SU(1, k+1)$. For any $l_1, l_2 \in \bN$ such that $1 \leq l_1 \leq \floor*{\frac{k}{2}}$ and $1\leq l_2\leq {k}$, there exist sequences $\left(\rho_n\co\bZ\to{ \sf G}_i\right)_{n \in \bN}$ of convex-cocompact representations such that the algebraic limit is a discrete and faithful representation $\rho_{\infty}\co\bZ\to { \sf G}_i$ and such that the geometric limit $\Gamma_G$ of the subgroups $\rho_n(\bZ)$ is isomorphic to $\bZ^{l_i+1}$ for $i = 1, 2$. 
\end{introthm}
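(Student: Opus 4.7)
The plan is to construct, inside a fixed parabolic subgroup $P = MAN \subset {\sf G}_i$ stabilizing a chosen point $p_\infty$ on the boundary of the associated symmetric space, explicit loxodromic elements $g_n\in MAN$ whose algebraic limit $g_\infty$ is a non-trivial parabolic element of $N$ and whose cyclic subgroups $\langle g_n\rangle$ accumulate, in the Chabauty topology, on a discrete abelian subgroup of $N$ of rank $l+1$. For ${\sf G}_1=\SO_0(1,k+1)$ one has $N\cong\bR^k$ and the Levi $M=\SO(k)$ has maximal torus of rank $\lfloor k/2\rfloor$; for ${\sf G}_2=\SU(1,k+1)$ one has $N$ the $(2k{+}1)$-dimensional Heisenberg group and $M=\U(k)$ of rank $k$. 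Since the geometric limit $\Gamma_G$ of a sequence of cyclic groups is automatically abelian and contains $g_\infty$, it lies in the centralizer of $g_\infty$ inside $P$, and the natural rank of a torsion-free abelian discrete subgroup there is bounded by $1+\mathrm{rk}(M)$ (one direction for $g_\infty$ itself, plus one per independent rotation plane of $M$), matching the bound $l\leq\mathrm{rk}(M)$ stated in the theorem.

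For the construction in ${\sf G}_1$, working in the upper half-space model with $p_\infty=\infty$, I would take $g_n$ to be the similarity $x\mapsto\lambda_n r_n x+v_n$ of $\bR^k$, with $r_n$ in a fixed maximal torus $T_M\subset\SO(k)$ decomposing into commuting rotations of angle $\theta_n^{(i)}$ on orthogonal $2$-planes $V_1,\ldots,V_m$, where $m=\lfloor k/2\rfloor$. I would pick $v_n\to v$ with prescribed nonzero components in the first $l$ planes, and angles $\theta_n^{(i)}\to 0$ having $l$ distinct asymptotic rates --- concretely $\theta_n^{(i)}=\sum_{j=1}^l a_{ij}\cdot 2\pi/n^j$ with $(a_{ij})$ an $l\times l$ integer matrix of maximal rank, and $\log\lambda_n$ much smaller than each $\theta_n^{(i)}$ but nonzero. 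Using
\[g_n^k(x)=(\lambda_n r_n)^k x+\sum_{j=0}^{k-1}(\lambda_n r_n)^j v_n,\]
one verifies that $g_n\to g_\infty=(I,v)$ algebraically, so $\rho_\infty$ is discrete and faithful with image $\langle v\rangle\cong\bZ$; and for each $p=1,\ldots,l$ the sequence $k_n^{(p)}:=n^p$ makes $(\lambda_n r_n)^{n^p}\to I$ while the translation part $\sum_{j}(\lambda_n r_n)^j v_n$ converges to an explicit vector $w_p\in\bR^k$ whose components in each plane $V_i$ are linear combinations of the $a_{ij}$ and the $v_n^{(i)}$. A Vandermonde-type argument relying on the full rank of $(a_{ij})$ shows that $v,w_1,\ldots,w_l$ are linearly independent in $\bR^k$, so $\langle g_\infty,h_1,\ldots,h_l\rangle\cong\bZ^{l+1}$, where $h_p:=(I,w_p)$.

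To show that $\Gamma_G$ is exactly this $\bZ^{l+1}$, I would take an arbitrary subsequence $g_n^{k_n}$ converging to some $h\in{\sf G}$: the abelian property of $\Gamma_G$ together with the parabolic character of $g_\infty$ forces $h\in N$, hence $(\lambda_n r_n)^{k_n}\to I$, and a case analysis on the dominant scale of $k_n$ among $n,n^2,\ldots,n^l$ shows that, up to bounded additive perturbation, $k_n$ is an integer combination of these scales; consequently $h$ is an integer combination of $g_\infty,h_1,\ldots,h_l$. The construction for ${\sf G}_2=\SU(1,k+1)$ proceeds by the same strategy using the Heisenberg structure of $N$: rotations lie in the rank-$k$ maximal torus of $\U(k)$, translations $v_n$ are chosen in the Heisenberg group with a prescribed $\bC^k$-component, and iterates are computed taking into account the commutator $\mathrm{Im}\langle\cdot,\cdot\rangle$. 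The centre of $N$ automatically supplies the extra translation direction needed for the rank-$(l{+}1)$ lattices reaching $l=k$. Since $\lambda_n\neq 1$ in both cases, every $g_n$ is loxodromic, so each $\rho_n$ is convex-cocompact.

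The principal difficulty is the multi-scale bookkeeping in the second step: one has to simultaneously arrange that each iterate $g_n^{n^p}$ has bounded translation part (which requires upper bounds on $\log\lambda_n$ and precise matching among the $\theta_n^{(i)}$'s) and that the resulting limits $w_1,\ldots,w_l$ span an $l$-dimensional subspace complementary to $\bZ v$. The cap at $l=\mathrm{rk}(M)$ reflects a structural obstruction: each rotation plane of $T_M$ contributes exactly one independent ``phase'' along which the exponent $k_n$ can be tuned to extract a new translation limit, and once all rank-many such phases are used the Cartan subalgebra of $MA$ is exhausted. A secondary subtlety, handled by the full-rank choice of $(a_{ij})$, is ruling out mixed sequences $k_n=\sum_p c_p n^p+\mathrm{(lower)}$ from producing limits outside the expected lattice.
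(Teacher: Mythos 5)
Your construction of the sequences $g_n$ is essentially the one in the paper (Propositions \ref{hyp_Jor} and \ref{hyp_Jor2}): well positioned loxodromic elements in the stabilizer of $\infty$ with rotation angles at the scales $2\pi/n^j$, translation length of lower order, and repelling fixed point at distance $\sim n$ (your prescription of the translation part $v_n\to v$ is equivalent to the paper's prescription of the fixed point $\mathbf{x}_n$, via $\mathbf{v}=(\lambda^{-1}I-A)\mathbf{x}$), and your extraction of the extra generators as limits of $g_n^{n^p}$ matches Tables \ref{t.1}--\ref{t.4}. Where you diverge from the paper is the proof that $\Gamma_G$ is discrete and of rank exactly $l+1$: the paper extends each $\rho_n$ to a Schottky representation of $F_2$ (Theorems \ref{t.Schotky_real} and \ref{t.Schotky_cpx}), deduces discreteness and torsion-freeness of the limit from Proposition \ref{prop.2.14}, and caps the rank with the separate classification result Proposition \ref{p.thatsallR}; you instead propose a direct analysis of all convergent subsequences $g_n^{k_n}$, in the spirit of what the paper does inside Theorem \ref{p.allarisesR}. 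That route is viable, but two of your supporting claims are wrong as stated.

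First, it is not true that commuting with the parabolic $g_\infty=(I,v)$ forces a limit $h$ into $N$: the centralizer of a nontrivial unipotent inside $G_\infty$ contains a positive-dimensional subgroup of the Levi factor (all rotations fixing $v$), so a priori $h$ could carry a nontrivial elliptic part --- and this is exactly the failure mode exhibited in Section \ref{s.counter}, where a slightly different choice of scales produces a whole one-parameter rotation group in the geometric limit. What actually rules this out is the quantitative content of Proposition \ref{p.converge}: convergence of $g_n^{k_n}$ forces each product (rotation angle)$\,\times\,$(fixed-point coordinate) to converge, and since the coordinates diverge like $n$ this forces $r_n^{k_n}\to I$ and pins down $k_n$ modulo each scale $n^i$; your ``case analysis on the dominant scale'' must be run through this estimate, not through the centralizer. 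Second, your structural justification of the bound $l\le \mathrm{rk}(M)$ --- that torsion-free discrete abelian subgroups of the centralizer of $g_\infty$ have rank at most $1+\mathrm{rk}(M)$ --- is false for $\SO_0(1,k+1)$: $N\cong\bR^k$ contains lattices of rank $k>\lfloor k/2\rfloor+1$ as soon as $k\ge 3$ (the paper points this out right after Theorem \ref{t.INTRO1}). The bound is a statement about geometric limits of cyclic groups (Theorem \ref{it.2a}), not about abelian subgroups of a parabolic, and it requires the full argument of Proposition \ref{p.thatsallR}. Neither error touches the construction itself, but both occur at the one step --- identifying $\Gamma_G$ exactly and proving it discrete --- that your proposal leaves as a sketch.
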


Observe that the largest rank of a discrete abelian subgroup of $\SU(1, k+1)$ is $k+1$, and, since the geometric limit of a sequence of abelian groups is necessarily abelian, one cannot expect better results than the ones obtained in Theorem \ref{t.INTRO1}. On the other hand, $\SO_0(1, k+1)$ admits discrete subgroups of rank $k$, which is bigger than  $\floor*{\frac{k}{2}}+1$ as soon as $k\geq 3$. It is thus natural to wonder if the bound of Theorem \ref{t.INTRO1} is sharp. In the second result of the paper we prove that this is the case.
\begin{introthm}\label{it.2a}
Let $(\rho_n\co\bZ\to \SO_0(1,k+1))_{n\in\bN}$ be a sequence of convex-cocompact representations converging geometrically to a discrete group $\Gamma_G$. Then the rank of $\Gamma_G$ is at most $\floor*{\frac{k}{2}}+1$.
\end{introthm}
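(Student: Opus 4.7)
The plan is to show that $\Gamma_G$ embeds into a connected abelian Lie subgroup of $\SO_0(1,k+1)$ of dimension $\floor*{k/2}+1$, from which the rank bound follows by a standard lattice argument. Concretely, for each $n$, since $\rho_n(1)$ is loxodromic, it is a screw motion with axis $\alpha_n$, translation length $\ell_n>0$, and rotational part $r_n\in\SO(k)$ acting on the orthogonal complement of $\alpha_n$. Choose a maximal torus $T_n\cong T^{\floor*{k/2}}$ of $\SO(k)$ containing $r_n$; then
\[
A_n \ := \ \{\text{translations along }\alpha_n\}\cdot T_n \ \cong \ \bR\times T^{\floor*{k/2}}
\]
is a closed connected abelian Lie subgroup of $G=\SO_0(1,k+1)$ of dimension $\floor*{k/2}+1$, and by construction $\rho_n(\bZ)\subset A_n$.

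Next, by compactness of the Chabauty topology on closed subgroups of $G$, I pass to a subsequence along which $A_n\to A_\infty$. Since $\rho_n(\bZ)\subset A_n$, standard stability of containment under Chabauty convergence gives $\Gamma_G\subset A_\infty$, and $A_\infty$ is abelian (commutativity is preserved in the limit). The key technical step is to argue $\dim A_\infty\le \floor*{k/2}+1$. For this, I pass to a further subsequence so that the Lie algebras $\mathfrak a_n:=\mathrm{Lie}(A_n)$ converge in the Grassmannian of $(\floor*{k/2}+1)$-dimensional subspaces of $\so(1,k+1)$ to an abelian subalgebra $\mathfrak a_\infty$ of the same dimension. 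The standard comparison between Chabauty limits of connected Lie subgroups of bounded dimension and Kuratowski limits of their Lie algebras then identifies $\mathrm{Lie}(A_\infty^\circ)$ with $\mathfrak a_\infty$, giving $\dim A_\infty^\circ=\floor*{k/2}+1$. Since each $A_n$ is connected, one should verify that no additional discrete components appear in the limit, so that $\Gamma_G\subset A_\infty^\circ\cong\bR^a\times T^b$ with $a+b=\floor*{k/2}+1$. A standard lift-to-universal-cover argument (the preimage of $\Gamma_G$ in $\bR^{a+b}$ is discrete of rank at most $a+b$ and contains the kernel $\{0\}\times \bZ^b$ of the cover) then shows any discrete subgroup of $\bR^a\times T^b$ has torsion-free rank at most $a$, giving $\mathrm{rank}\,\Gamma_G\le a\le \floor*{k/2}+1$.

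The main obstacle is establishing that the Chabauty limit $A_\infty$ has dimension exactly $\floor*{k/2}+1$: in general, Chabauty limits of Lie subgroups can exhibit dimension jumps, as the elementary example $(1/n)\bZ\subset\bR$ (dimension $0$, Chabauty limit $\bR$ of dimension $1$) already shows. In our setting, however, the $A_n$ all have the same positive dimension, which heuristically prevents such a jump; making this rigorous --- perhaps by exploiting that each $\mathfrak a_n$ is a Cartan-type subalgebra of $\so(1,k+1)$ and using the fact that all Cartan subalgebras containing a loxodromic direction are conjugate of the fixed dimension $\floor*{k/2}+1$ --- is the heart of the argument. An alternative route, which sidesteps the abstract Chabauty/Lie-algebra comparison, is to analyze limits of $\rho_n(m_n)$ hands-on: writing $\rho_n(1)=\exp(\ell_n Y_n+Z_n)$ with $Y_n$ in the split Cartan and $Z_n$ in the compact centralizer, parametrize elements of $\Gamma_G$ by limits of $(m_n\ell_n, m_n Z_n)$ and check directly that the result lies in a $(\floor*{k/2}+1)$-dimensional abelian Lie subgroup.
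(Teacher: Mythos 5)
Your reduction to a containing abelian subgroup is the right idea, and your final lattice argument (a discrete subgroup of $\bR^a\times T^b$ has rank at most $a\leq a+b$) is correct. But the step you yourself flag as the ``main obstacle'' is a genuine gap, and the tool you propose to fill it --- a ``standard comparison between Chabauty limits of connected Lie subgroups of bounded dimension and Kuratowski limits of their Lie algebras'' --- is false in general. Dimension jumps occur even for \emph{connected} closed subgroups of a fixed positive dimension: the circles of slope $1/n$ in $T^2$ are one-dimensional closed connected subgroups whose Lie algebras converge in the Grassmannian to a line, yet their Chabauty limit is all of $T^2$. So the heuristic ``the $A_n$ all have the same positive dimension, which prevents a jump'' does not hold, and neither does the identification of $\mathrm{Lie}(A_\infty^{\circ})$ with $\lim_n\mathfrak a_n$; in general one only gets the containment $\mathrm{Lie}(A_\infty^{\circ})\supseteq\lim_n\mathfrak a_n$. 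The same applies to your parenthetical ``one should verify that no additional discrete components appear'': that too requires an argument, since $\Gamma_G$ could a priori pick up rank from the component group of $A_\infty$. The fact that all the $A_n$ are conjugate Cartan subgroups does not help directly, because the Chabauty limit of a divergent sequence of conjugates need not be (and here is not) a Cartan subgroup.

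What actually closes the gap --- and what the paper does in Proposition \ref{p.thatsallR} --- is the concrete version of your last sentence: after conjugating by bounded elements of $\sf K$ (Proposition \ref{conjug}) one may assume each $\rho_n(1)$ is well positioned, record its geometric data $(\mathbf{x}_n,y_n,\theta_{1,n},\ldots,\theta_{l,n})$, and then use the explicit convergence criterion of Proposition \ref{p.converge} to determine exactly which limits $\lim_n\rho_n(m_n)$ can occur. The outcome is that $\Gamma_G$ lands inside an explicit abelian subgroup of the form $T^{i}\times\bR^{l-i+1}$ (a torus in the stabilizer of a geodesic times an $(l-i+1)$--dimensional subspace of the unipotent radical $N\cong\bR^{k}$, spanned by certain odd coordinate directions together with $\lim_n\mathbf x_n/\|\mathbf x_n\|$), where $i$ is governed by which coordinates of $\mathbf x_n$ stay bounded; discreteness then gives rank at most $l-i+1\leq\floor*{\frac{k}{2}}+1$. (The case where the two fixed points of $\rho_n(1)$ stay apart is handled separately by Proposition \ref{p.strongconvergence}, which forces strong convergence.) So your proposal is not wrong in spirit, but as written it rests on a general principle that fails, and the work needed to repair it is precisely the hands-on computation you relegate to a one-sentence alternative.
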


 As a third result we prove that the bound on $l$ provided by Theorem \ref{t.INTRO1} is also a sufficient condition on a subgroup of $\SO_0(1, k+1)$ which guarantees that it arises as geometric limit of cyclic subgroups.
\begin{introthm}\label{it.2}
Let $l=\floor*{\frac{k}{2}}$ and let $\Delta<\SO_0(1, k+1)$ be a discrete, torsion free, abelian subgroup  of rank at most $l+1$. Then there is a sequence of convex cocompact representations $(\rho_n:\bZ\to \SO_0(1,k+1))_{n\in\bN}$ converging algebraically and geometrically and such that the geometric limit $\Gamma_G$ is equal to $\Delta$. 
\end{introthm}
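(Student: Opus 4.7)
The strategy is to refine the construction underlying Theorem \ref{t.INTRO1} so that the geometric limit is not a fixed rank-$(l+1)$ example but an arbitrary prescribed $\Delta$ of rank $r\leq l+1$. The cyclic case $r=1$ is easy: a constant sequence works if the generator is loxodromic, and for a parabolic generator $g$ the sequence $A_n = a_{s_n} g$ (with $a_{s_n}$ a small dilation along an axis through the fixed point of $g$, and $s_n\searrow 0$) is loxodromic and produces no extra limits, since high powers either blow up or give elements of $\langle g\rangle$. I therefore focus on $r\geq 2$.

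For $r\geq 2$, any discrete torsion-free abelian subgroup of $\SO_0(1,k+1)$ consists of parabolic elements sharing a common fixed point $p\in\partial\bH^{k+1}$ (a loxodromic would force rank $\leq 1$ by the centralizer computation). Placing $p=\infty$ in the upper half-space model, $\Delta$ becomes a rank-$r$ lattice in the horospherical translation group $\R^k$, spanned by $\R$-linearly independent vectors $v_1,\dots,v_r$. Since $r-1\leq l=\floor*{\frac{k}{2}}$, I choose pairwise orthogonal 2-planes $P_1,\dots,P_{r-1}\subset\R^k$ whose direct sum $W$ has dimension $2(r-1)\geq r$ and contains $\mathrm{span}(v_1,\dots,v_r)$; the remaining coordinates in $W^\perp$ will be untouched.

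The approximating sequence takes the form $A_n(x)=e^{s_n}R_n x+u_n$ for $x\in\R^k$, where $s_n\searrow 0$ ensures $A_n$ is loxodromic, $R_n\in\SO(k)$ restricts to the identity on $W^\perp$ and to a rotation by angle $\theta_n^{(j)}\to 0$ on each $P_j$, and $u_n\in W$ tends to $v_1$. The iterates are given by
\begin{equation*}
A_n^k(x) = e^{k s_n}R_n^k x + \Big(\sum_{i=1}^{k} e^{i s_n} R_n^i\Big)u_n,
\end{equation*}
so on each plane $P_i\cong\bC$ they act as $z\mapsto e^{k s_n+i k\theta_n^{(i)}}z + C_k^{(i)} u_n^{(i)}$ for a partial geometric sum $C_k^{(i)}$ with explicit closed form. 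To realize the remaining generators as $A_n^{k_n^{(j)}}\to T_{v_j}$, $j=2,\dots,r$, I pick integer scales $k_n^{(j)}\to\infty$ and tune $(s_n,\theta_n^{(i)},u_n)$ so that $k_n^{(j)}\theta_n^{(i)}\to 2\pi\ell_{i,j}$ for some integers $\ell_{i,j}$, $k_n^{(j)}s_n\to 0$, and $C_{k_n^{(j)}}^{(i)}u_n^{(i)}\to (v_j)_{P_i}$ for every pair $(i,j)$. The core of the argument is a multi-scale asymptotic analysis using nested scales (e.g.\ $k_n^{(j)}=N_n^{\,j-1}$ with $N_n\to\infty$) and layered expansions $\theta_n^{(i)}=2\pi p_i/k_n^{(j_i)}+\delta_i/(k_n^{(j_i)})^2+\cdots$, patterned on the J{\o}rgensen perturbation $\alpha_n=\exp(2\pi i/N_n+\delta_n/N_n^2)$ that drives the $\SO_0(1,3)$ case; a parameter count matches the $l(r-1)$ real targets to the asymptotic freedom in the $\theta_n^{(i)}$.

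Finally, $\Gamma_G\supseteq\langle T_{v_1},\dots,T_{v_r}\rangle=\Delta$ by construction. Since $R_n\equiv I$ on $W^\perp$ and $u_n\in W$, any convergent $A_n^{m_n}$ limits to a translation by a vector in $W$, and Theorem~\ref{it.2a} bounds $\mathrm{rank}(\Gamma_G)\leq l+1$. When $r=l+1$ this forces $\Gamma_G=\Delta$ by rank comparison; for $r<l+1$ I impose arithmetic genericity on the $\theta_n^{(i)}$ beyond the fixed scales $k_n^{(j)}$ to preclude sequences $m_n\to\infty$ outside those governed by the $k_n^{(j)}$ from producing new convergent translations. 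The principal technical obstacle is thus the simultaneous satisfaction of the limit conditions across all $r-1$ scales and all $r-1$ planes, which requires the layered asymptotic construction sketched above.
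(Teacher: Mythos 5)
Your constructive half follows essentially the same route as the paper's proof (Theorem \ref{p.allarisesR}): place $\Delta$ in the unipotent radical $N\cong\bR^k$ of $\mathrm{Stab}(\infty)$, approximate by well positioned loxodromic elements whose second fixed point runs off to infinity at speed $n$, and use nested integer scales together with layered expansions of the rotation angles (your $\theta_n^{(i)}=2\pi p_i/k_n^{(j_i)}+\delta_i/(k_n^{(j_i)})^2+\cdots$ is exactly the paper's $\theta_{i,n}=\tfrac{2\pi}{D_{i-1}n^i}+\sum_{j>i}b_{j,i}\tfrac{2\pi}{D_{j-1}n^j}$). One ingredient you are missing even here: the divisibility conditions $k_n^{(j)}\theta_n^{(i)}\to 2\pi\ell_{i,j}$ with \emph{integer} powers $k_n^{(j)}$ are only solvable after reducing to the case where the generators $v_1,\dots,v_r$ have rational coordinates relative to one another; the paper achieves this via Proposition \ref{l.dense} (the set of groups arising as geometric limits of cyclic subgroups is closed in the Chabauty topology), and then restricts $n$ to a suitable arithmetic progression. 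A "parameter count'' does not substitute for this reduction. (Also, your structural claim that every rank $\geq 2$ discrete torsion-free abelian subgroup is a lattice of horospherical translations overlooks screw-parabolic elements; the paper's body statement simply assumes $\Delta<N$.)

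The genuine gap is in the containment $\Gamma_G\subseteq\Delta$. Rank comparison does not force $\Gamma_G=\Delta$: a rank-$(l+1)$ lattice sits with finite index inside infinitely many larger rank-$(l+1)$ discrete groups (e.g.\ $2\bZ^{l+1}\subsetneq\bZ^{l+1}$), so knowing $\Delta\leq\Gamma_G$, that $\Gamma_G$ is discrete, and that $\mathrm{rank}(\Gamma_G)\leq l+1$ by Theorem \ref{it.2a} still allows $\Gamma_G$ to contain fractional translates of your generators, produced by powers $A_n^{m_n}$ with $m_n$ not of the designated forms $k_n^{(j)}$. Ruling this out is the real technical core, and your proposal replaces it with an invalid rank argument (for $r=l+1$) and an unspecified "arithmetic genericity'' (for $r<l+1$). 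The paper's proof devotes its entire second half to exactly this point: it takes a hypothetical extra element of $\Gamma_G$ corresponding to a vector $f$ in a strict fundamental domain for $\Delta$, writes it as $\lim_n\rho_n(m_n)$, expands $m_n=\sum_i m_{i,n}n^iD_i$ in the nested scales, and uses the constraints $m_n\theta_{j,n}n<1$ together with the radial (dilation) coordinate to force every digit $m_{i,n}$ to vanish, a contradiction. Without an argument of this kind your proof establishes only $\Delta\subseteq\Gamma_G$, not equality.
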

 
Understanding geometric limits of cyclic subgroups is a key step in understanding geometric limits of general convex cocompact subgroups. Indeed, on the one hand, the setting is simple enough to allow for a full classifications of the possible limits, such as the one provided in Theorem \ref{it.2}, on the other hand it is often the case that if a sequence $\rho_n$ of representations of the group $\Gamma$ has a geometric limit that strictly contains the algebraic limit, then it is possible to find a cyclic subgroup $\langle \gamma\rangle<\Gamma$ for which the same occurs. In order to illustrate this phenomena, we construct, in the last section of the paper examples of sequences of representations of free groups whose restriction to a suitably chosen cyclic subgroup gives the examples constructed in Theorem \ref{t.INTRO1}.

\begin{introthm}\label{it.3}
Let ${\sf G}$ be either $\SO_0(1, k+1)$ or $\SU(1, k+1)$. There exists a sequence $(\rho_n\co F_2 = \langle a, b\rangle\to {\sf G})_{n\in\bN}$ of convex-cocompact representations  such that the restriction $\rho_n|_{\langle a\rangle}$ is the sequence of representations constructed in Theorem \ref{t.INTRO1}. In these examples, $\rho_n$ converges algebraically to $\rho_\infty$ and geometrically to a discrete subgroup $\Gamma_G$ isomorphic to $\bZ^2\ast \bZ$ and properly containing $\rho_\infty( F_2)$.
\end{introthm}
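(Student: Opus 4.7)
The plan is to extend the cyclic sequence constructed in Theorem~\ref{t.INTRO1} by choosing a second loxodromic generator $\beta\in{\sf G}$ whose dynamics on the visual boundary is cleanly separated from that of the cyclic part, and then invoke a Schottky/ping-pong argument that is uniform in $n$.

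Concretely, let $\alpha_n\in{\sf G}$ denote the loxodromic sequence from Theorem~\ref{t.INTRO1} in the case $l=1$, so that $\alpha_n\to\alpha_\infty$ algebraically while the cyclic subgroups $\langle\alpha_n\rangle$ converge geometrically to a discrete abelian subgroup $A_\infty\cong\bZ^2$. The extra generator of $A_\infty$ arises as a limit $\lim_n\alpha_n^{k_n}$ for a suitable sequence $k_n\to\infty$, and from the explicit form of the construction one can pin down a compact region $U\subset\partial\Hyp^{k+1}_\F$ containing the fixed points of every $\alpha_n$ as well as those of every nontrivial element of $A_\infty$. Next I would fix, independently of $n$, a loxodromic $\beta\in{\sf G}$ whose attracting and repelling fixed points lie in a region $V$ disjoint from $U$. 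Setting $\rho_n(a)=\alpha_n$ and $\rho_n(b)=\beta^N$ for $N$ sufficiently large, the classical Schottky ping-pong lemma applied to $(U,V)$ guarantees that each $\rho_n$ is discrete, faithful, and convex cocompact with image isomorphic to $F_2$.

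With this setup the two limits can be identified as follows. The algebraic limit $\rho_\infty$ sends $a\mapsto\alpha_\infty$ and $b\mapsto\beta^N$; since the Schottky configuration persists, $\rho_\infty$ is again faithful and $\rho_\infty(F_2)\cong F_2$. For the geometric limit I would track arbitrary convergent sequences $\rho_n(w_n)$, with $w_n$ a reduced word in $a^{\pm 1},b^{\pm 1}$. Because $U$ and $V$ are disjoint and uniform in $n$, up to extracting subsequences one may decompose $w_n$ along its syllables: any syllable $b^{\pm m}$ is bounded in $n$ and contributes a power of $\beta^N$, while a syllable $a^{k}$ contributes either a power of $\alpha_\infty$ (if the exponent stays bounded) or an element of $A_\infty\smallsetminus\langle\alpha_\infty\rangle$ (if the exponent tends to infinity along the sequences identified in Theorem~\ref{t.INTRO1}). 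A persistence-of-ping-pong argument then shows that these contributions concatenate freely in the limit, so that $\Gamma_G=A_\infty\ast\langle\beta^N\rangle\cong\bZ^2\ast\bZ$, which properly contains $\rho_\infty(F_2)$.

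The main obstacle will be making the ping-pong uniform in $n$: one has to exhibit the compact region $U$ in a way compatible with the delicate rescaling underlying Theorem~\ref{t.INTRO1}, so that $U$ simultaneously captures the fixed points of every $\alpha_n$ and those of the (typically parabolic) extra elements appearing in $A_\infty$. This requires a careful analysis of the boundary dynamics of $\alpha_n$ as $n\to\infty$, using the explicit model from the proof of Theorem~\ref{t.INTRO1}. Once this uniform Schottky region is in hand, the remaining steps---classical ping-pong for each $n$, persistence in the geometric limit, and the identification of the free product structure---reduce to standard variants of Schottky-group arguments in rank one symmetric spaces.
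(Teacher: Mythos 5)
Your overall strategy coincides with the paper's (Theorems \ref{t.Schotky_real} and \ref{t.Schotky_cpx}): keep the J{\o}rgensen cyclic sequence as $\rho_n(a)$, adjoin one fixed loxodromic second generator, and run a ping-pong argument that is uniform in $n$. However, the step you defer is the entire mathematical content of the proof, and the sufficient condition you propose in its place is not the right one. You ask for a compact region $U$ ``containing the fixed points of every $\alpha_n$ as well as those of every nontrivial element of $A_\infty$.'' That condition is essentially vacuous here: the fixed points of $\alpha_n$ are $\infty$ and $\mathbf{x}_n\to\infty$, and the extra elements of $A_\infty$ are parabolics fixing only $\infty$, so any neighbourhood of $\infty$ ``captures'' them all. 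Containing fixed points does not let you play ping-pong: for the free product $\langle\alpha_n\rangle\ast\langle\beta^N\rangle$ you need $\alpha_n^{m}(V)\subset U$ for \emph{every} $m\neq 0$, uniformly in $n$, and this is exactly where the J{\o}rgensen phenomenon bites --- the powers $\alpha_n^{n^j}$ converge to nontrivial unipotents that displace points near the origin by a bounded amount, so they do not push a generic $V$ into a neighbourhood of the fixed-point set. The statement one actually has to prove is that a \emph{fixed} ball $B$ about the origin satisfies $\alpha_n^{m}(B)\cap B=\emptyset$ for all $m\neq 0$ and all large $n$, i.e.\ $B$ lies in a fundamental domain for $\langle\alpha_n\rangle$ on the boundary.

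The paper establishes this by an explicit computation in polar coordinates centred at $\mathbf{x}_n$: the rotation angles $2\pi/n,\dots,2\pi/n^{l}$ force, successively, that any $m$ with $\alpha_n^{m}(B)\cap B\neq\emptyset$ is divisible by $n$, then by $n^{2}$, \dots, then by $n^{l}$, and the radial (translation-length) constraint then excludes $m=d\,n^{l}$ with $d\neq 0$; in the complex case a further estimate on the centre coordinate of the Heisenberg group is needed to rule out the last power. None of this appears in your proposal, and it does not follow from the condition you state. Your second half (syllable tracking to identify $\Gamma_G\cong\bZ^{l+1}\ast\bZ$ and the proper containment of $\rho_\infty(F_2)$) presupposes these same uniform estimates; note also that the paper extracts the discreteness and torsion-freeness of the cyclic geometric limit more cheaply, by observing that it sits inside the geometric limit of the free-group representations and invoking Proposition \ref{prop.2.14}. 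To complete your argument you must replace the ``fixed points in $U$'' condition by the fundamental-domain condition and carry out the quantitative divisibility analysis.
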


One key tool we develop in the paper is a precise understanding of the interplay between the Jordan and Iwasawa decomposition of loxodromic elements of the group $\sf G$. In particular we define, in Section \ref{sec_gen_jor}, the notion of \emph{geometric data} for such an element $g$, which corresponds to a geometric description of its Jordan decomposition. Namely, it is enough to record the position of its two fixed points, its translation length and its rotation angles in order to be able to reconstruct a loxodromic element up to conjugacy in $K$. In addition, from the geometric data for an element $g$ we can easily compute the analogue data associated to any power $g^n$ for any $n \in \bN$. On the other hand, we can relate this to the Iwasawa decomposition of the element in question, which allows us to write, in each case, the matrix corresponding to $g$ with respect to a fixed basis. This is key in understanding which sequences of elements converge, which in turn allows us to compute the geometric limit of sequences of representations. In particular, given a sequence $(\rho_n\co\bZ\to {\sf G})_{n \in \bN}$, we obtain a precise criterion for a sequence of elements to converge algebraically (Proposition \ref{p.converge} and \ref{p.convergeC}), which we then use to construct various examples of sequences converging algebraically and not strongly.

J\o rgensen's example, as well as its generalizations, had numerous applications in Kleinian theory, leading to a better understanding of the topology of the set of convex cocompact representations and its closure in the character variety. In particular, they helped illustrating phenomena such as \emph self-bumping, and were also used to show that the action of the mapping class group doesn't extend continuously to many classical compactifications of the set of convex cocompact representations or slices thereof. In addition, they were the basis of various compactness arguments in three dimensional geometry and topology (see Section \ref{sec:Jor} for some examples and references). Despite many such applications also use deep results so far only known for hyperbolic $3$--manifolds,  we hope that new results in this direction will be made possible in the future by the fast growing understanding of the geometry of the locally symmetric spaces associated to general Anosov groups.

\subsection*{Outline of the paper}
 In Section \ref{back} we discuss the background. We recall important facts about the symmetric spaces associated to a group $\sf G$ of real rank one, the Chabauty topology for closed subgroups of $\sf G$, the geometric and the algebraic topology on spaces of representations into $\sf G$ and their relationship. In Section \ref{sec_gen_jor} we  describe some generalizations of the J{\o}rgensen's example  to the Lie groups  $\SO_0(1, k+1)$ and $\SU(1, k+1)$, while developing the abstract setup needed in the next sections. These are the sequences mentioned in Theorem \ref{t.INTRO1}. In Section \ref{thats_all} we  show that these are the best results one can get in the $\SO_0(1, k+1)$-case, proving Theorems \ref{it.2a} and \ref{it.2}. Finally, in Section \ref{s.free} we extend these results to  free groups, proving Theorem \ref{it.3} and concluding the proof of Theorem \ref{t.INTRO1}.

\addtocontents{toc}{\protect\setcounter{tocdepth}{2}}

\section{Background}\label{back}

\subsection{Symmetric spaces of rank one} \label{rank_one}
In this section we will revise some background material about Riemannian symmetric spaces of real rank $1$ tailored to a reader with some familiarity with the hyperbolic three dimensional space. In particular we will describe a upper-half space model which will be very important in the rest of paper. A standard reference for this material is \cite{chen_greenberg}.

The Riemannian symmetric spaces of real rank $1$ and of negative curvature are the hyperbolic spaces $\HH^{k+1}_{\mathbf{F}}$, where $\mathbf{F}$ is the algebra:
\begin{enumerate}[(a)]
  \item $\mathbf{F} = \mathbf{R}$ of real numbers;
  \item $\mathbf{F} = \mathbf{C}$ of complex numbers;
  \item $\mathbf{F} = \mathbf{H}$ of quaternion numbers;
  \item $\mathbf{F} = \mathbf{O}$ of octonion numbers (only in the case $k = 1$).
\end{enumerate}

These spaces correspond to the homogeneous spaces $\faktor{\sf G}{\sf K}$, where $\sf G$ is a semisimple Lie group $\sf G$ of real rank $1$ and $\sf K$ is its maximal compact subgroup. In particular we can write these spaces as follows:
\begin{enumerate}[(a)]
  \item $\HH^{k+1}_{\mathbf{R}} = \faktor{\SO_0(1, {k+1})}{\SO(k)}$;
  \item $\HH^{k+1}_{\mathbf{C}} = \faktor{\SU(1, {k+1})}{\mathsf{U}(k)}$;
  \item $\HH^{k+1}_{\mathbf{H}} = \faktor{\Sp(1, {k+1})}{\Sp(k)}$;
  \item $\HH^2_{\mathbf{O}} = \faktor{F_4}{\mathsf{Spin}(9)}$.
\end{enumerate}
Note that in our descriptions the group $\sf G$ doesn't act faithfully on $\XG$ since its (finite) center acts trivially. However, for the sake of the computation we decided to priviledge the groups $\sf G$ having a simpler matrix expression.

In these notes we will  focus our attention to the real and complex cases. As a result, from now on, we will let $\mathbf{F} \in \{\mathbf{R}, \mathbf{C}\}$, even if most results of the next section work in the case $\mathbf F=\bH$ as well. 

\subsubsection{The projective model}

Let $V = V^{1, {k+1}}(\mathbf{F})$ be the vector space $\mathbf{F}^{k+2}$ endowed with the  $\mathbf{F}$--Hermitian form $\Psi$ defined by: 
$$\Psi(\mathbf{z}, \mathbf{w}) = \overline{z_0} w_{k+1} + \overline{z_{k+1}} w_0 - \sum_{j = 1}^{k} \overline{z_j} w_j,$$
for all $\mathbf{z} = (z_0, \ldots, z_{k+1}), \mathbf{w}= (w_0, \ldots, w_{k+1}) \in V$. Here $\overline{z_i}$ denotes the standard conjugation in $\mathbf{F}$. 

An $\mathbf{F}$--linear transformation $g \in \mathrm{SL}(V) \cong \mathrm{SL}(k+2,\mathbf{F})$ is $\Psi$--\textit{unitary} if it preserves $\Psi$, that is, if $\Psi(\mathbf{z}, \mathbf{w}) = \Psi(g(\mathbf{z}), g(\mathbf{w}))$ for all $\mathbf{z}, \mathbf{w} \in V$. The set of $\Psi$--unitary transformations defines the group $\SU(1, {k+1}; \mathbf{F})$, which is the automorphism group of $V$:
$$\SU(1, {k+1}; \mathbf{F}) = \left\{g \in \mathrm{SL}(V) \mid \Psi(\mathbf{z}, \mathbf{w}) = \Psi(g(\mathbf{z}), g(\mathbf{w})) \quad \forall \mathbf{z}, \mathbf{w} \in V\right\}.$$   
We denote by $V^+$ the cone of positive vectors $V^+ = \{z \in V \mid \Psi(z, z) > 0\}$ and by $V^0$ the cone of null vectors $V^0 = \{z \in V \mid \Psi( z, z) = 0\}$. Note that these subsets are invariant under $\SU(1, k+1; \mathbf{F})$.

Let $\mathbb{P}(V)$ be the projective space of $V$. The \textit{projective model} of the hyperbolic space $\HH^{k+1}_{\mathbf{F}}$ is 
$$\HH^{k+1}_{\mathbf{F}} := \mathbb{P} (V^+) \subset \mathbb{P}(V).$$ 
It is easy to check that $\SU(1, {k+1}; \mathbf{F})$ acts transitively on $\HH^{k+1}_{\mathbf{F}}$ leaving it invariant.

The space $\HH^{k+1}_{\mathbf{F}}$ is  a model for the symmetric space, as it can be identified with the coset space $\faktor{\SU(1, {k+1}; \mathbf{F})}{\mathsf{S}(\mathsf{U}(1; \mathbf{F}) \times \mathsf{U}({k+1}; \mathbf{F}))}$. Indeed the stabilizer of any point in the action is isomorphic to the subgroup of the group $\mathsf{U}(1; \mathbf{F}) \times \mathsf{U}(k+1; \mathbf{F})$ consisting of matrices of determinant one as we will now see. Let $\mathcal{E} = \{e_0, \ldots, e_{k+1}\}$ be the standard basis of $V=\mathbf{F}^{k+2} $. Consider the point $\mathbf q :=[e_0+e_{k+1}] \in \HH^{k+1}_{\mathbf{F}}$. In order to compute the stabilizer of $\mathbf q$ in $\SU(1, k+1; \mathbf{F})$, note that the orthogonal complement of $\mathbf q$ is the subspace $W = \langle e_1, \ldots, e_{k}, \frac{1}{\sqrt 2}(e_0-e_{k+1}) \rangle$ which is isomorphic to $\mathbf{F}^{k+1}$. This gives  an orthonormal basis for  the restriction of $\Psi$ to $W$, which is positive definite, and is therefore the standard Hermitian form. As a result the group of $\mathbf F$-unitary transformations preserving $W$ is isomorphic to $\mathsf{U}({k+1}; \mathbf{F})$. If $g\in\SU(1, {k+1}; \mathbf{F})$ has the property that $g(e_0+e_{k+1}) = a(e_0+e_{k+1})$, then $|a| = 1$ and $g$ leaves $W$ invariant. So, with respect to the basis $\left\{\frac{1}{\sqrt2}(e_0+e_{k+1}), e_1,\ldots,e_{k},\frac{1}{\sqrt2}(e_0-e_{k+1})\right\}$, the element $g$  is of the form 
$ g = \begin{bmatrix}
    a & 0  \\
     0 & A 
 \end{bmatrix}$, 
where $a =\det(A)^{-1}$ and $A \in \mathsf{U}({k+1}; \mathbf{F})$, as we wanted.

\subsubsection{The boundary and classification of isometries}\label{s.isom}
Let $\overline{\HH^{k+1}_{\mathbf{F}}}$ denote the closure of $\HH^{k+1}_{\mathbf{F}}$ in $\mathbb{P}(V)$. The boundary  $\partial \HH^{k+1}_{\mathbf{F}}$ of $\HH^{k+1}_{\mathbf{F}}$ in $\mathbb{P}(V)$ is the projectivization $\mathbb{P}(V^0)$ of the null cone $V^0$. The group $\SU(1, {k+1}; \mathbf{F})$ acts  transitively on pairs of distinct points in $\partial \HH^{k+1}_{\mathbf{F}}$. Any element  $g \in \SU(1, {k+1}; \mathbf{F})$ leaves $\overline{\HH^{k+1}_{\mathbf{F}}}$ invariant and since $\overline{\HH^{k+1}_{\mathbf{F}}}$ is a closed ball, $g$ has a fixed point in $\overline{\HH^{k+1}_{\mathbf{F}}}$. In particular, we say that an element $g \in \SU(1, {k+1}; \mathbf{F}) \setminus \{\mathrm{Id}\}$ is
\begin{itemize}
  \item \textit{Elliptic} if it has at least one fixed point in $\HH^{k+1}_{\mathbf{F}}$.
  \item \textit{Parabolic} if it has exactly one fixed point in $\partial\HH^{k+1}_{\mathbf{F}}$. Furthermore we say that it is \textit{unipotent} if all its eigenvalues are equal to 1.
  \item \textit{Loxodromic} if it has no fixed points in $\HH^{k+1}_{\mathbf{F}}$, and exactly two fixed points in $\partial\HH^{k+1}_{\mathbf{F}}$. Furthermore we say that it is \textit{hyperbolic}  if it has positive real eigenvalues.
\end{itemize}
One can check that:
\begin{itemize}
  \item Any elliptic element has eigenvalues of norm one, and are conjugate to elements in $\mathsf{S}(\mathsf{U}(1; \mathbf{F}) \times \mathsf{U}(k+1; \mathbf{F}))$. (See discussion above).
    \item Any parabolic element $g$ has $k$ eigenvalues of norm one, and it  admits a unique decomposition as $g = pe$, where $p$ is unipotent,  $e$ is elliptic, and $p$ and $e$ commute. 
  \item Any loxodromic element $g$ has $k$ eigenvalues of norm one and it has a unique decomposition as $g = he$, where $h$ is hyperbolic, $e$ is elliptic, and $h$ and $e$ commute. Any loxodromic element which fixes  the two points $[e_0], [e_{k+1}] \in \partial\HH^{k+1}_{\mathbf{F}}$ has the form 
$$g = \begin{bmatrix}
 e^{y}\lambda &  & \\
   & A &\\
& & e^{-y} {\lambda}
  \end{bmatrix},$$ where $y \in \mathbf{R}$, $\lambda \in \mathbf{F}$ with $|\lambda| = 1$, and $A \in \mathsf{U}(k; \mathbf{F})$ with $\mathrm{det}(A) = \lambda^{-2}$.
\end{itemize}

In order to study non-elliptic elements it is often also useful to consider the upper-half space model; this is obtained from the projective model by taking an affine chart in which exactly one point in $\partial \HH^{k+1}_{\mathbf{F}}$ is at infinity.

\subsubsection{Upper-half space model}

We consider the affine chart on the complement of the hyperplane $\{x_0=0\}$ given by $(u_1,\ldots, u_{k+1})\mapsto [1,u_1,\ldots, u_{k+1}]$. The \textit{upper half-space domain} $\mathscr{S}$ is the preimage of $\HH^{k+1}_{\mathbf{F}}$ in this chart, and can be explicitely described as 
$$\mathscr{S} = \mathscr{S}^{k+1}(\mathbf{F}) = \left\{u \in \mathbf{F}^{k+1} \mid \Re (\hat{u}_{k+1}) > \frac{1}{2}\sum_{j = 1}^{k} |\hat{u}_j|^2 \right\}.$$
It is sometimes called \textit{Siegel domain}.  We denote by $\infty:= [e_0] =\partial\HH^{k+1}_{\mathbf{F}} \setminus \partial \mathscr{S}$ the only point in $\partial \HH^{k+1}_{\mathbf{F}}$ outside of our chosen affine chart. Its stabilizer in $\SU(1, {k+1}; \mathbf{F})$ is
$${\sf G}_{\infty} = \{g \in {\SU}(1, {k+1}; \mathbf{F}) \mid g(\infty) = \infty\}.$$
Any element $g \in {\sf G}_\infty$ can be written, with respect to the standard basis as a matrix 
$$g =  \begin{bmatrix}
\lambda  & \lambda \overline{a}^T A & s\\
0 & A & a\\
0 & 0 & \overline{\lambda}^{-1}
\end{bmatrix},\quad \text{ where }\quad \left\{\begin{array}{l}\lambda \in \mathbf{F}^*\\ a\in \mathbf{F}^{k}\\A \in \U(k; \mathbf{F})\\\mathrm{det}(A)\lambda\overline{\lambda}^{-1}=1\\\Re (\lambda^{-1} s) = \frac{1}{2}|a|^2 = \frac{1}{2} \overline{a}^T a\end{array}\right. .$$
Here $|a|$ is the Frobenius or Euclidean norm, which is defined as the square root of the sum of the squares of the absolute value of its elements.

\subsubsection{A model for the boundary}\label{s.bdry}
It will be important for us to have a good understanding of the action of elements in ${\sf G}_\infty$ on the boundary $\partial\HH^{k+1}_{\mathbf{F}}$, as well as to have a convenient model for $\partial\HH^{k+1}_{\mathbf{F}}\setminus \{\infty\}$.

For this observe that ${\sf G}_\infty$, being a parabolic group, admits a \emph{Levi decomposition} ${\sf G}_\infty=N\rtimes L$ where $N$ is its unipotent radical, and $L$ is its Levi factor. In our specific case of interest $L$ is the subgroup of block diagonal matrices
$$ L = \left\{ 
 \begin{bmatrix} 
\lambda  & 0& 0\\
 0 & A & 0\\
 0 & 0 & \overline{\lambda}^{-1}
 \end{bmatrix} \left|
 \begin{array}{l} 
  \lambda \in \mathbf F^{*}, \\A\in \U(k;\mathbf F),\\ \mathrm{det}(A)\lambda\overline{\lambda}^{-1}=1
  \end{array}\right.\right\}.$$
Observe that the group $L$ consists  of the elements in $\sf G$ that preserve the geodesic with endpoints $\infty:= [e_0]$ and $0:=[e_{k+1}]$. The unipotent radical $N$ of ${\sf G}_\infty$ consists of unipotent matrices of the form
$$N = N_{\mathbf{F}} = \left\{ 
  \begin{bmatrix}
1  &  \overline{a}^T  & s\\
0 & E_{k} & a\\
0 & 0 & 1
\end{bmatrix}\left|
 \begin{array}{l} 
  a \in \mathbf F^{k}, \\
s \in \mathbf{F},\\
\Re ( s) = \frac{1}{2}|a|^2
  \end{array}\right.\right\}.$$
In particular, if $\mathbf F=\mathbf R$, the number $s$ is uniquely determined by $a$, and the unipotent group $N_{\mathbf{R}}$ can be identified with $\mathbf R^{k}$ with its usual group structure, while, if $\mathbf F=\bC$, the imaginary part of $s$ can be chosen freely and  thus the group $N_{\mathbf{C}}$ identifies with the Heisenberg group
$$\Heis_{k}:=\bC^{k}\rtimes \bR$$ 
with group structure
$$(a_1,b_1)\cdot (a_2,b_2)=(a_1+a_2, b_1+b_2+\Im(\overline{a_1}^Ta_2).$$
In the following, with a slight abuse of notation, we will often identify the groups $\bR^{k}$ (resp. $\Heis_{k}$) with the group $N$, thus leaving implicit the isomorphisms 
$$\begin{array}{rccc}
\psi_{\mathbf{R}}\co&\bR^{k}&\to&N_{\mathbf{R}}\\
&a&\mapsto& \begin{bmatrix}
1  &  \overline{a}^T  & \frac{1}{2}|a|^2\\
0 & E_{k} & a\\
0 & 0 & 1
\end{bmatrix}
\end{array}
\quad\quad
\begin{array}{cccc}
\psi_{\mathbf{C}}\co&\Heis_{k}&\to& N_{\mathbf{C}}\\
&(a,b)&\mapsto&  \begin{bmatrix}
1  &  \overline{a}^T  & \frac{1}{2}|a|^2+ib\\
0 & E_{k} & a\\
0 & 0 & 1
\end{bmatrix}
\end{array}
$$
The group $N$ acts simply transitively on $\partial\HH^{k+1}_{\mathbf{F}}\setminus \{\infty\}$, and we can thus use it to give a parametrization of such set, based at the point $0=[e_{k+1}]$:
\begin{Proposition}\label{l.param}
The maps
$$\begin{array}{cccc}
\phi_{\mathbf{R}}\co&\bR^{k}&\to& \partial\HH^{k+1}_{\mathbf{R}}\setminus \{\infty\}\\
&a&\mapsto& \begin{bmatrix} \frac{1}{2}|a|^2\\a\\1\end{bmatrix}
\end{array}
\quad\quad
\begin{array}{cccc}
\phi_{\mathbf{C}}\co&\Heis_{k}&\to& \partial\HH^{k+1}_{\mathbf{C}}\setminus \{\infty\}\\
&(a,b)&\mapsto& \begin{bmatrix} \frac{1}{2}|a|^2+ib\\a\\1\end{bmatrix}
\end{array}
$$
give a parametrization of $\partial\HH^{k+1}_{\mathbf{F}}\setminus \{\infty\}$. This is equivariant with the $N_{\mathbf{F}}$--actions described above.
\end{Proposition}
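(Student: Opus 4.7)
The core idea is to recognize $\phi_{\mathbf{F}}$ as the orbit map of the base point $0 = [e_{k+1}] \in \partial\HH^{k+1}_{\mathbf{F}}$ under the $N_{\mathbf{F}}$--action, once the domain is identified with $N_{\mathbf{F}}$ via $\psi_{\mathbf{F}}$. Since the paragraph preceding the statement records that $N_{\mathbf{F}}$ acts simply transitively on $\partial\HH^{k+1}_{\mathbf{F}}\setminus\{\infty\}$, this at once yields both bijectivity and $N_{\mathbf{F}}$--equivariance, and the proof reduces to two direct computations plus the verification that $\psi_{\mathbf{F}}$ is a group isomorphism.

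First I would check that the image of $\phi_{\mathbf{F}}$ really lies in $\partial\HH^{k+1}_{\mathbf{F}}\setminus\{\infty\}$. Evaluating $\Psi$ on the representative $v = (\tfrac12|a|^2(+ib),\, a,\, 1)$ of $\phi_{\mathbf{F}}(a)$ (resp.\ $\phi_{\mathbf{F}}(a,b)$), the two outer-diagonal contributions add to $|a|^2$ in the real case and to $\tfrac12|a|^2 - ib + \tfrac12|a|^2 + ib = |a|^2$ in the complex case, exactly cancelling the middle sum $\overline{a}^T a$, so $v$ is null. Since the last coordinate is $1 \neq 0$, this class is never the point $\infty = [e_0]$.

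Next, a short block matrix multiplication gives
$$\psi_{\mathbf{F}}(v)\cdot\begin{bmatrix}0\\0\\1\end{bmatrix} = \begin{bmatrix}\tfrac12|a|^2(+ib)\\a\\1\end{bmatrix},$$
so $\phi_{\mathbf{F}}(v) = \psi_{\mathbf{F}}(v)\cdot 0$. The map $\psi_{\mathbf{R}}$ is a group isomorphism because both sides are abelian and the two natural coordinates match; the additive cross--term $\overline{a_1}^T a_2 \in \mathbf{R}$ is absorbed into the identity $\tfrac12|a_1+a_2|^2 = \tfrac12|a_1|^2+\tfrac12|a_2|^2+\overline{a_1}^T a_2$. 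For $\psi_{\mathbf{C}}$ the same computation yields at the $(1,3)$--entry of $\psi_{\mathbf{C}}(a_1,b_1)\,\psi_{\mathbf{C}}(a_2,b_2)$ the value $\tfrac12|a_1|^2+\tfrac12|a_2|^2+\overline{a_1}^T a_2+i(b_1+b_2)$, which is precisely $\tfrac12|a_1+a_2|^2+i\bigl(b_1+b_2+\Im(\overline{a_1}^T a_2)\bigr)$, i.e.\ the image of the Heisenberg product $(a_1,b_1)\cdot(a_2,b_2)$ under $\psi_{\mathbf{C}}$.

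With these pieces in place, equivariance is formal: $\phi(v_1\cdot v_2) = \psi(v_1 v_2)\cdot 0 = \psi(v_1)\bigl(\psi(v_2)\cdot 0\bigr) = \psi(v_1)\cdot\phi(v_2)$; and bijectivity follows because the orbit map of any point under a simply transitive action is a bijection. The only step with genuine (though elementary) content is the Heisenberg computation verifying that $\psi_{\mathbf{C}}$ is a group homomorphism; the rest is bookkeeping of $3\times 3$ block matrices against the Hermitian form $\Psi$.
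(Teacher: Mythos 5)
Your proposal is correct and follows exactly the route the paper intends: the paper offers no written proof, relying on the immediately preceding observation that $N_{\mathbf{F}}$ acts simply transitively on $\partial\HH^{k+1}_{\mathbf{F}}\setminus\{\infty\}$, so that $\phi_{\mathbf{F}}$ is the orbit map of $0=[e_{k+1}]$ composed with $\psi_{\mathbf{F}}$. Your explicit verifications (nullity of the representative, the block-matrix action on $e_{k+1}$, and the Heisenberg homomorphism computation) are all accurate and simply fill in the details the paper leaves implicit.
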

Furthermore, since the action of $N$ on $\partial\HH^{k+1}_{\mathbf{F}}\setminus \{\infty\}$ is simply transitive, we have the following:
\begin{Proposition}\label{l.convtoU}
Let $(g_n)_{n\in\bN}$ be a sequence of elements in $\SU(1,{k+1};\mathbf F)$ converging to an element $g_\infty$. Assume that $g_\infty$ belongs to $N$. Then
$$g_\infty=\psi\circ\phi^{-1}\lim_{n \to \infty}(g_n\cdot [e_{k+1}]).$$
\end{Proposition}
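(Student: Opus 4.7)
The plan is to combine the continuity of the action of $\SU(1,k+1;\mathbf{F})$ on the closure $\overline{\HH^{k+1}_{\mathbf{F}}}\subset\mathbb{P}(V)$ with the $N$-equivariance of the parametrization $\phi$ from Proposition \ref{l.param}. Once one records that $\phi(0)=[e_{k+1}]$ and $\psi(0)=\mathrm{Id}$ (both read off directly from the explicit formulas in Section \ref{s.bdry}), the statement should reduce to a formal manipulation, and I would not expect any serious obstacle: the simple transitivity of the $N$-action on $\partial\HH^{k+1}_{\mathbf{F}}\setminus\{\infty\}$ is precisely what makes $\phi$ a homeomorphism intertwining the two realizations of $N$.

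First I would note that, since the $\SU(1,k+1;\mathbf{F})$-action on $\mathbb{P}(V)$ is continuous and $g_n\to g_\infty$, one has $g_n\cdot[e_{k+1}]\to g_\infty\cdot[e_{k+1}]$ in $\overline{\HH^{k+1}_{\mathbf{F}}}$. Because $g_\infty\in N$ fixes $\infty$ and preserves the complement $\partial\HH^{k+1}_{\mathbf{F}}\setminus\{\infty\}$, the limit belongs to $\partial\HH^{k+1}_{\mathbf{F}}\setminus\{\infty\}$, which is the domain of $\phi^{-1}$. Thus both sides of the claimed equality are well defined, and the proposition reduces to verifying the identity $\psi\circ\phi^{-1}(g_\infty\cdot[e_{k+1}])=g_\infty$.

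To obtain this identity I would read the equivariance statement of Proposition \ref{l.param} as $\phi(u\cdot v)=\psi(u)\cdot\phi(v)$ for all $u,v$ in the source of $\phi$ (with $\cdot$ denoting the group operation on $\bR^{k}$ or on $\Heis_{k}$). Specialising to $v=0$ and using $\phi(0)=[e_{k+1}]$ gives the key relation $\phi(u)=\psi(u)\cdot[e_{k+1}]$, equivalently $\phi^{-1}(n\cdot[e_{k+1}])=\psi^{-1}(n)$ for every $n\in N$. Applying this with $n=g_\infty$ yields $\psi\circ\phi^{-1}(g_\infty\cdot[e_{k+1}])=g_\infty$, and combining with the continuity input of the previous paragraph produces the stated equality. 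The only delicate point to monitor is that the limit of $g_n\cdot[e_{k+1}]$ does not escape to $\infty$; but this is ruled out precisely by the hypothesis $g_\infty\in N$, which forces $g_\infty\cdot[e_{k+1}]\neq\infty$.
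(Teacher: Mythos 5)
Your argument is correct and is exactly the reasoning the paper has in mind: the statement is presented there without proof as an immediate consequence of the simple transitivity of the $N$-action and the equivariance of $\phi$, which is precisely what you spell out via $\phi(u)=\psi(u)\cdot[e_{k+1}]$ together with continuity of the projective action. Nothing is missing.
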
 

\subsection{Geometric and algebraic convergence}\label{geom-alg} 

In this paper we will consider the space ${\rm Hom}(\Gamma,\sf G)$ of representations $\rho\co\Gamma \to {\sf G}$, where $\Gamma$ is a fixed abstract group and ${\sf G}$ is a semisimple Lie group. We consider two types of convergence: the algebraic and the geometric convergence.

The algebraic convergence is the pointwise convergence, or equivalently the convergence induced from the topology of the representation variety ${\rm Hom}(\Gamma, \sf G)$.
\begin{Definition}[Algebraic convergence]
  A sequence $(\rho_n\co\Gamma \to {\sf G})_{n \in \bN}$ \textit{converges algebraically} to $\rho_\infty\co\Gamma \to {\sf G}$ if for all $\gamma \in \Gamma$, $\{\rho_n(\gamma)\}_n$ converges to $\rho_\infty(\gamma)$ (in the topology of ${\sf G}$). The representation $\rho_\infty$ is called the \textit{algebraic limit} of $(\rho_n)_{n \in \bN}$.
\end{Definition} 

In order to have the language to discuss geometric convergence, we need a short excursus about Chabauty topology. A good reference about it is the paper by Abert, Bergeron, Biringer, Gelander, Nikolov, Rimbault and Samet \cite{7samurai}.

\begin{Definition}\label{chabauty}
Given a locally compact, second countable group $\sf G$, let $\mathrm{Sub}_{\sf G}$ denote the set of closed subgroups of $\sf G$. The \textit{Chabauty (or compact-open) topology} \cite{chabauty} on $\mathrm{Sub}_{\sf {\sf G}}$ is generated by open sets of the form
\begin{enumerate}
  \item $\mathcal{O}_1(K) = \{H \in \mathrm{Sub}_{\sf G} \mid H \cap K = \emptyset\}$ for all $K \subset {\sf G}$ compact;
  \item $\mathcal{O}_2(U) = \{H \in \mathrm{Sub}_{\sf G} \mid H \cap U \neq \emptyset\}$ for all $U \subset {\sf G}$ open.
\end{enumerate}
Alternatively, a sequence $\{H_n\}_{n \in \bN}$ in $\mathrm{Sub}_{\sf G}$ converges to $H \in \mathrm{Sub}_{\sf G}$ if and only if:
\begin{enumerate}
  \item For every $h \in H$, there exists a sequence $\{h_n\}_{n \in \N}$ in ${\sf G}$ such that $h_n \in H_n$ and $h_n \to h$ in ${\sf G}$;
  \item For every subsequence $\{h_{n_i}\}$ such that $h_{n_i} \in H_{n_i}$ and $h_{n_i} \to \hat{h}$, then $\hat{h} \in H$.
\end{enumerate}
\end{Definition}
This leads to the notion of geometric convergence.

\begin{Definition}[Geometric convergence]
 A sequence $(\Gamma_n)_{n \in \bN}$ of closed subgroups of ${\sf G}$ \textit{converges geometrically} to $\Gamma_G$ if it converges in the Chabauty topology, see Definition \ref{chabauty}. One can see Remark \ref{r.GH} for the reason behind the name \emph{geometric}.
\end{Definition}
For convenience, if we have a sequence of representations $(\rho_n\co\Gamma \to {\sf G})_{n \in \bN}$ such that the associated subgroups $(\rho_n(\Gamma))$ converge geometrically to $\Gamma_G$, we will often say, with a slight abuse of notation, that $(\rho_n)_{n \in \bN}$ converges geometrically to $\Gamma_G$.

Convergence of discrete groups of semisimple Lie groups in the Chabauty topology can be understood more geometrically via Gromov-Hausdorff convergence of the associated pointed orbifolds.
\begin{Remark}[On Gromov-Hausdorff convergence]\label{r.GH}
We denote by $\XG = \faktor{\sf G}{\sf K}$ the  Riemannian symmetric space associated to a semisimple Lie group $\sf G$. Observe that the choice of the presentation $\XG = \faktor{\sf G}{K}$ implicitly includes the choice of a basepoint $[\sf K]$. A sequence of discrete subgroups $H_n<\sf G$ converges to a discrete subgroup $H$ in the Chabauty topology if and only if the pointed quotient orbifolds $\left(\mfaktor{H_n}{\XG}, [K]\right)$ converge to $\left(\mfaktor{H}{\XG}, [K]\right)$ in the pointed Gromov-Hausdorff topology.  As we won't need Gromov--Hausdorff convergence  in the paper, we refer to Canary, Epstein and Green \cite{can_not} for details. 
\end{Remark}

There are some relations between algebraic and geometric convergence. This result is a direct consequence of the definitions:
\begin{Proposition}\label{subset}
  If $(\rho_n\co\Gamma \to \sf G)_{n \in \bN}$ converges algebraically to $\rho_\infty$ and $(\rho_n(\Gamma))_{n \in \bN}$ converges geometrically to $\Gamma_G$, then $\rho_\infty(\Gamma)\subseteq\Gamma_G$.
\end{Proposition}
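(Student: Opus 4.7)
The plan is to verify the inclusion elementwise, using directly the sequential characterization of Chabauty convergence (condition (2) in Definition \ref{chabauty}).

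First I would fix an arbitrary element $\gamma\in\Gamma$ and aim to show that $\rho_\infty(\gamma)\in\Gamma_G$. By the definition of algebraic convergence, the sequence $\rho_n(\gamma)$ converges to $\rho_\infty(\gamma)$ in the topology of $\sf G$. On the other hand, for every $n$ one has the trivial membership $\rho_n(\gamma)\in\rho_n(\Gamma)$. Thus we have produced a sequence of elements, one from each of the subgroups $\rho_n(\Gamma)$, converging in $\sf G$ to $\rho_\infty(\gamma)$.

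The next step is to invoke the geometric convergence assumption: by condition (2) of the sequential description of the Chabauty topology, any accumulation point of such a sequence must lie in the Chabauty limit $\Gamma_G$. Applied to the constant choice $h_n=\rho_n(\gamma)\in\rho_n(\Gamma)$ with limit $\rho_\infty(\gamma)$, this gives $\rho_\infty(\gamma)\in\Gamma_G$. Since $\gamma$ was arbitrary, one concludes $\rho_\infty(\Gamma)\subseteq\Gamma_G$; no subgroup property of $\Gamma_G$ is required beyond the fact that, being a Chabauty limit of closed subgroups, it is itself a closed subgroup of $\sf G$.

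There is essentially no obstacle: the statement is a direct unfolding of the two definitions, and the subgroup structure of $\rho_\infty(\Gamma)$ plays no role in the argument (it is just a subset, which is contained in $\Gamma_G$ because each generator of the subset is).
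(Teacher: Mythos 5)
Your argument is correct and is exactly the "direct consequence of the definitions" that the paper invokes without writing out: one applies condition (2) of the sequential characterization of Chabauty convergence to the sequence $h_n=\rho_n(\gamma)\in\rho_n(\Gamma)$, which converges to $\rho_\infty(\gamma)$ by algebraic convergence. Nothing is missing.
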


When the last inclusion is an equality, we have `strong convergence': 
\begin{Definition}[Strong convergence]
  A sequence $(\rho_n\co\Gamma \to {\sf G})_{n \in \bN}$ \textit{converges strongly} to $\rho_\infty\co\Gamma \to {\sf G}$ if it converges algebraically to $\rho_\infty$ and $(\rho_n(\Gamma))_{n \in \bN}$ converges geometrically to $\rho_\infty(\Gamma)$.
\end{Definition}

Direct properties of the Chabauty topology have useful applications in the study of geometric limits. For example it is well known that the Chabauty topology is compact, separable and metrizable, see \cite{benedetti-petronio}. Furthermore, clearly, any Chabauty limit of abelian subgroups is abelian.
 In addition the following fact is easy to check from the definition. We say that a sequence in $\mathrm{Sub}_{\sf G}$ that is \emph{uniformly discrete}, if there exists an open neighbourhood $U$ of the identity $\mathrm{Id}$ in ${\sf G}$ such that $H_n \cap U = \{\mathrm{Id}\}$ for all $n$.

\begin{Proposition}\label{l.udisc}
 Let ${\sf G}$ be a connected Lie group and let $(H_n)_{n\in \bN}$ be a sequence in $\mathrm{Sub}_{\sf G}$ that is \emph{uniformly discrete}. If $(H_n)_{n\in \bN}$ converges toward a group $H$ in $\mathrm{Sub}_{\sf G}$ in the Chabauty topology, then $H$ is discrete. Under the same assumption, if all the groups $H_n$ are torsion free, so is $H$.
\end{Proposition}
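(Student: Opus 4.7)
The plan is to use the uniform discreteness neighborhood $U$ directly, exploiting the sequential characterization of Chabauty convergence in Definition \ref{chabauty}. For the first assertion I would show the stronger statement $H\cap U=\{\mathrm{Id}\}$: this suffices, since a subgroup of a topological group is discrete exactly when the identity is isolated in it. The argument is by contradiction: if there were $h\in (H\cap U)\setminus\{\mathrm{Id}\}$, Chabauty convergence would supply $h_n\in H_n$ with $h_n\to h$; openness of $U$ would then force $h_n\in H_n\cap U=\{\mathrm{Id}\}$ for $n$ large, contradicting $h_n\to h\neq\mathrm{Id}$.

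For the torsion-free statement, the approach is the same, applied to a hypothetical torsion element. I would take $h\in H$ with $h^k=\mathrm{Id}$ for some $k\geq 1$ and again pick $h_n\in H_n$ with $h_n\to h$. Continuity of the $k$-th power map yields $h_n^k\to h^k=\mathrm{Id}$, so $h_n^k\in U$ for $n$ sufficiently large, and hence $h_n^k=\mathrm{Id}$. Since each $H_n$ is torsion-free, this forces $h_n=\mathrm{Id}$ for $n$ large, and passing to the limit gives $h=\mathrm{Id}$.

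I do not anticipate any serious obstacle here: both parts are essentially direct consequences of the sequential form of Chabauty convergence in the presence of a uniform identity neighborhood devoid of nontrivial group elements. The connectedness hypothesis on $\sf G$ is not explicitly used in this sketch; it is presumably included because, together with $\sf G$ being a Lie group, it guarantees local compactness and second countability, which are precisely what make the sequential description of the Chabauty topology equivalent to the open-set one recalled in Definition \ref{chabauty}.
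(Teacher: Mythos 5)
Your proof is correct and is exactly the direct argument the paper has in mind: the paper offers no written proof, stating only that the proposition "is easy to check from the definition," and your use of the sequential characterization of Chabauty convergence together with the uniform neighbourhood $U$ is the intended verification. Both steps (showing $H\cap U=\{\mathrm{Id}\}$ and killing torsion via $h_n^k\to\mathrm{Id}$) are sound.
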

We immediately deduce the following result.
\begin{Corollary}\label{disc_Z}
  Given an algebraically converging sequence $(\rho_n\co\bZ \to \sf G)_{n\in\bN}$ of uniformly discrete and faithful representations, then the geometric limit $\Gamma_G$  is discrete and it is abelian (and so contained in a maximal torus of $\sf G$).
\end{Corollary}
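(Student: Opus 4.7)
The statement consists of three assertions on the geometric limit $\Gamma_G$: discreteness, commutativity, and containment in a maximal torus. Implicit is the existence of $\Gamma_G$, which we may assume by passing to a subsequence and invoking the compactness of the Chabauty topology on $\mathrm{Sub}_{\sf G}$.

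For discreteness, I would apply Proposition \ref{l.udisc} directly. The uniform-discreteness hypothesis on the representations $\rho_n$ means there is a neighbourhood $U$ of $\mathrm{Id}\in {\sf G}$ with $\rho_n(\bZ)\cap U=\{\mathrm{Id}\}$ for every $n$; this is precisely uniform discreteness of the sequence $(\rho_n(\bZ))_{n\in\bN}$ of closed subgroups, so Proposition \ref{l.udisc} gives discreteness of the Chabauty limit $\Gamma_G$.

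The abelian property is the general principle that Chabauty limits of abelian subgroups are abelian, which was already recorded in the paragraph preceding the corollary. Explicitly, for $g,h\in\Gamma_G$, the second characterization of Chabauty convergence in Definition \ref{chabauty} produces sequences $g_n, h_n\in \rho_n(\bZ)$ converging to $g$ and $h$ respectively; since each $\rho_n(\bZ)$ is cyclic and hence abelian, $g_nh_n=h_ng_n$, and continuity of multiplication in ${\sf G}$ passes the identity to the limit.

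For the last assertion, the classification of isometries in Subsection \ref{s.isom} shows that two commuting non-trivial elements of ${\sf G}$ must share their boundary-fixed-point data: two commuting loxodromics have the same axis, two commuting parabolics share a unique fixed point, and a loxodromic cannot commute with a parabolic (the parabolic would have to permute, hence fix, the two loxodromic boundary points, contradicting the uniqueness of its own fixed point). Consequently $\Gamma_G$ stabilizes a common geometric datum---either a geodesic or a single point at infinity---and therefore sits inside the centralizer of that datum, which in both cases is the appropriate maximal torus of ${\sf G}$ (a Cartan subgroup in the loxodromic case, and the suitable abelian subgroup of ${\sf G}_\infty$ in the parabolic case). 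The main subtlety I anticipate is pinning down what ``maximal torus'' means precisely in the parabolic case; otherwise the proof is routine given the results already established earlier in the paper.
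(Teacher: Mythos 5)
Your proof is correct and takes essentially the same route as the paper, which deduces this corollary immediately from Proposition \ref{l.udisc} together with the observation, recorded in the paragraph just before it, that Chabauty limits of abelian subgroups are abelian. Your extra discussion of the maximal-torus parenthetical goes beyond what the paper supplies (it gives no justification for that clause), and your caveat about what ``maximal torus'' should mean in the parabolic case is well placed.
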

The assumption of the existence of $U$ is crucial for both statements of Proposition \ref{l.udisc}. We construct,  in Section \ref{s.counter}, examples of discrete torsion-free subgroups whose limit is non-discrete and  has torsion. Observe that in these examples the sequences of groups cannot be uniformly discrete.

If, instead, the group is non-radical, for example a non-abelian free group, or a non-elementary hyperbolic group, we don't need to assume that the representations are uniformly discrete to deduce discreteness of the limit. Recall that a group is called \textit{non-radical} if it does not contain infinite normal nilponent groups.
\begin{Proposition}[{\cite[Proposition 8.9]{Kap}}]\label{prop.2.14}
  Let $\Gamma$ be any non-radical group and $(\rho_n\co\Gamma \to {\sf G})_{n \in \bN}$ be a sequence of discrete representations converging algebraically to $\rho_\infty$ and geometrically to $\Gamma_G$. Then $\Gamma_G$ is a discrete subgroup of $\sf G$. In particular, $\rho_\infty(\Gamma)$ is discrete.
\end{Proposition}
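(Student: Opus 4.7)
The plan is to argue by contradiction, producing an infinite normal nilpotent subgroup of $\Gamma$ under the assumption that $\Gamma_G$ is not discrete, contradicting the non-radical hypothesis.

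Since $\Gamma_G$ is a closed subgroup of the Lie group $\sf G$, it is a Lie subgroup. Assuming it is non-discrete, its identity component $\Gamma_G^0$ is a non-trivial, closed, connected Lie subgroup of $\sf G$, characteristic and hence normal in $\Gamma_G$. The first main step is to show that $\Gamma_G^0$ is nilpotent, for which I would invoke the Zassenhaus--Kazhdan--Margulis lemma: there exist a neighborhood $U$ of $e$ in $\sf G$ and an integer $d = d(\sf G)$ such that for any discrete subgroup $H \leq \sf G$, the group $\langle H \cap U\rangle$ is nilpotent of step at most $d$. Given any $h^{(0)}, \ldots, h^{(d)} \in \Gamma_G \cap U$, the Chabauty characterization of geometric convergence yields approximating sequences $h^{(i)}_n \in \rho_n(\Gamma)$ with $h^{(i)}_n \to h^{(i)}$; for $n$ large these lie in $U$, so the $(d+1)$-fold iterated commutator of the $h^{(i)}_n$ vanishes in each $\rho_n(\Gamma)$, and passing to the limit the same iterated commutator of the $h^{(i)}$ vanishes in $\Gamma_G$. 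Hence $\langle \Gamma_G \cap U\rangle$ is $d$-step nilpotent, and so is its closure; since $\Gamma_G^0$ is generated by any neighborhood of $e$ in $\Gamma_G$, it is contained in that closure and therefore nilpotent.

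Next, define $N := \rho_\infty^{-1}(\Gamma_G^0)$, a normal subgroup of $\Gamma$ because $\Gamma_G^0$ is normal in $\Gamma_G \supseteq \rho_\infty(\Gamma)$. Its image $\rho_\infty(N) = \rho_\infty(\Gamma) \cap \Gamma_G^0$ is a subgroup of the nilpotent group $\Gamma_G^0$ and is thus itself nilpotent. To complete the argument it remains to verify that $N$ is infinite and nilpotent. Infiniteness would follow from a density/approximation argument: since $\Gamma_G^0$ is open in $\Gamma_G$ and $\Gamma_G$ is non-discrete, geometric convergence forces $\rho_\infty(\Gamma)$ to accumulate on $e$ within $\Gamma_G$, producing infinitely many elements of $\rho_\infty(\Gamma) \cap \Gamma_G^0 = \rho_\infty(N)$ and hence of $N$.

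The main obstacle is verifying nilpotency of $N$ as an abstract group when the representations $\rho_n$ fail to be faithful, since even though $\rho_\infty(N)$ is nilpotent one must control $\ker(\rho_\infty|_N) = \ker(\rho_\infty)$. The standard approach is to observe that for each $\gamma \in \ker(\rho_\infty)$ we have $\rho_n(\gamma) \to e$, so $\rho_n(\gamma) \in U$ for $n$ large, placing $\rho_n(\gamma)$ in the $d$-step nilpotent group $\langle \rho_n(\Gamma) \cap U\rangle$; iterated commutator relations in these nilpotent groups can then be transported back to $\Gamma$, yielding nilpotency of $\ker(\rho_\infty)$ and, combined with nilpotency of $\rho_\infty(N)$, of $N$ itself. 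Since $\Gamma$ is non-radical, this contradicts the existence of such an $N$, proving $\Gamma_G$ discrete; the second assertion is immediate from $\rho_\infty(\Gamma) \subseteq \Gamma_G$.
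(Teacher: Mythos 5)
The paper does not actually prove this statement: it is quoted from Kapovich's book and only ever applied to convex-cocompact, hence discrete \emph{and faithful}, representations, so there is no in-paper proof to compare against. Assessing your argument on its own terms: the first half is correct and is the standard core of the argument. The Zassenhaus lemma gives a neighbourhood $U$ of $e$ and a uniform class $d$ such that $\langle H\cap U\rangle$ is nilpotent of class at most $d$ for every discrete $H\leq{\sf G}$; commutator identities pass through the Chabauty limit exactly as you say; hence $\langle\Gamma_G\cap U\rangle$ and its closure are nilpotent, and $\Gamma_G^0$ is a nontrivial connected nilpotent subgroup, normal in $\Gamma_G$, whenever $\Gamma_G$ is non-discrete.

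The second half has two genuine gaps. First, infiniteness of $N=\rho_\infty^{-1}(\Gamma_G^0)$: you assert that non-discreteness of $\Gamma_G$ forces $\rho_\infty(\Gamma)$ to accumulate at $e$. It does not. Elements of $\Gamma_G$ near the identity arise as limits $\lim_n\rho_n(\gamma_n)$ with $\gamma_n$ depending on $n$, and need not belong to $\rho_\infty(\Gamma)$; the entire point of this paper is that $\Gamma_G$ can be strictly larger than $\rho_\infty(\Gamma)$. A priori one could have $\rho_\infty(\Gamma)\cap\Gamma_G^0=\{e\}$ and $\ker\rho_\infty$ finite, in which case your $N$ is finite and yields no contradiction with non-radicality. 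Excluding this configuration is the real content of the cited result, and it requires playing the approximants $\rho_n(\gamma)$, for $\gamma$ ranging over $\Gamma$, against the small elements $\rho_n(\gamma_n)$ inside each discrete group $\rho_n(\Gamma)$ (a Margulis--J{\o}rgensen type commutator-contraction argument), not merely the limit representation. Second, nilpotency of $\ker(\rho_\infty)$: the identity $[\rho_n(\gamma_0),\ldots,\rho_n(\gamma_d)]=e$ only shows that the iterated commutator lies in $\ker\rho_n$, and there is no way to ``transport it back to $\Gamma$'' unless $\rho_n$ is injective. Indeed, without faithfulness the proposition as quoted is false: precomposing the paper's first example in Section 3.3 (cyclic discrete groups whose geometric limit is a full one-parameter rotation group) with the surjection $F_2\to\mathbb{Z}$ produces discrete, non-faithful representations of the non-radical group $F_2$ whose geometric limit is non-discrete. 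So the hypothesis must be read as ``discrete and faithful''; granting that, your second gap closes immediately, but the first one remains and is where the substance of the proof lies.
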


We conclude the section discussing two additional results that follow directly from the definition of geometric limit, but will be crucial in our study of generalized J\o rgensen examples.

\begin{Proposition}\label{conjug}
	Let $p \in \XG =\faktor{\sf G}{\sf K}$ and $(\Gamma_n)_{n \in \bN}$ be a sequence of subgroups of $\sf G$ which converges geometrically to the subgroup $\Gamma_G$. Let $(k_n)_{n \in \bN}$ be a sequence of elements in $\mathrm{Stab}_p(\sf G) \cong \sf K$ such that $\lim_n k_n = k_\infty$. Then we have
$$k_n \Gamma_n k_n^{-1} \xrightarrow{\text{geom.}} k_\infty \Gamma_G k_\infty^{-1}.$$
\end{Proposition}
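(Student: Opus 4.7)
The plan is to verify Proposition \ref{conjug} directly from the sequential characterization of Chabauty convergence given in Definition \ref{chabauty}. Since conjugation is a jointly continuous operation on $\sf G$, this should amount essentially to transporting the two convergence conditions back and forth via the convergent sequence $k_n \to k_\infty$.

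More precisely, I would check the two conditions in Definition \ref{chabauty} for the sequence $(k_n \Gamma_n k_n^{-1})_{n\in\bN}$ and the candidate limit $k_\infty \Gamma_G k_\infty^{-1}$. For the first condition, take an arbitrary $h \in k_\infty \Gamma_G k_\infty^{-1}$ and write $h = k_\infty g k_\infty^{-1}$ with $g \in \Gamma_G$. Applying the geometric convergence $\Gamma_n \to \Gamma_G$, we obtain a sequence $g_n \in \Gamma_n$ with $g_n \to g$. Setting $h_n := k_n g_n k_n^{-1} \in k_n \Gamma_n k_n^{-1}$ and using joint continuity of multiplication and inversion in $\sf G$, we get $h_n \to k_\infty g k_\infty^{-1} = h$, as required.

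For the second condition, suppose a subsequence $h_{n_i} = k_{n_i} g_{n_i} k_{n_i}^{-1}$ (with $g_{n_i} \in \Gamma_{n_i}$) converges to some $\hat h \in \sf G$. Since $k_{n_i} \to k_\infty$, by continuity $g_{n_i} = k_{n_i}^{-1} h_{n_i} k_{n_i} \to k_\infty^{-1} \hat h k_\infty$. Now the second clause of Chabauty convergence applied to $\Gamma_n \to \Gamma_G$ forces $k_\infty^{-1} \hat h k_\infty \in \Gamma_G$, whence $\hat h \in k_\infty \Gamma_G k_\infty^{-1}$. This closes the argument.

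There is essentially no obstacle: the statement is a soft continuity result and the proof is a direct unraveling of the definition. The only thing to be slightly careful about is to use the sequential characterization (rather than the open-set definition), and to invoke joint continuity of $(k,g)\mapsto kgk^{-1}$ in both directions, so that both the existence of approximating sequences and the identification of subsequential limits are preserved under the varying conjugations.
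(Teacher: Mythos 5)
Your proof is correct and is precisely the direct unraveling of the sequential characterization of Chabauty convergence that the paper has in mind: the paper states this proposition without proof, treating it as an immediate consequence of the definition, and your two-part verification via continuity of conjugation is exactly that omitted argument. Nothing is missing.
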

We directly deduce:

\begin{Proposition}\label{l.dense}
The set of subgroups $\Delta$ that can be obtained as geometric limit of sequences of cyclic subgroup is closed in ${\rm Sub}(\sf G)$, and is invariant under conjugation in $G$. 
\end{Proposition}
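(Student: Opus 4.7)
The plan is to prove the two assertions independently, as each follows essentially formally from the Chabauty setup.

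For invariance under conjugation, fix $h\in{\sf G}$; the inner automorphism $c_h(g)=hgh^{-1}$ is a homeomorphism of $\sf G$, and hence induces a homeomorphism of $\mathrm{Sub}_{\sf G}$ equipped with the Chabauty topology (this can be checked directly on the generating subbasis $\mathcal{O}_1(K)$, $\mathcal{O}_2(U)$ of Definition \ref{chabauty}, since $c_h$ sends compact sets to compact sets and open sets to open sets). Since $c_h$ carries the cyclic subgroup $\langle g_n\rangle$ to the cyclic subgroup $\langle hg_nh^{-1}\rangle$, if $\Delta=\lim_n\langle g_n\rangle$ in the Chabauty topology, then $h\Delta h^{-1}=\lim_n\langle hg_nh^{-1}\rangle$ is again a geometric limit of cyclic subgroups.

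For closedness, the key input is metrizability of the Chabauty topology on $\mathrm{Sub}_{\sf G}$, quoted in the paper in the paragraph preceding Proposition \ref{l.udisc}. Fix a compatible metric $d$. Let $(\Delta_m)_{m\in\bN}$ be a sequence of groups, each a geometric limit of cyclic subgroups, with $\Delta_m\to\Delta$ in Chabauty topology. For each $m$, write $\Delta_m=\lim_n\langle g_{m,n}\rangle$ and use a diagonal extraction: choose $n(m)$ so that $d(\langle g_{m,n(m)}\rangle,\Delta_m)<1/m$. Then the triangle inequality gives $d(\langle g_{m,n(m)}\rangle,\Delta)\leq 1/m+d(\Delta_m,\Delta)\to 0$, so $\langle g_{m,n(m)}\rangle\to\Delta$ in Chabauty topology, exhibiting $\Delta$ as a geometric limit of cyclic subgroups.

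There is no substantial obstacle here: both assertions reduce to formal properties of the Chabauty topology (continuity of inner automorphisms for the first statement; metrizability together with a standard diagonal argument for the second), and neither uses any specific feature of $\sf G$ beyond its being a locally compact second countable topological group. I would expect to write out the argument in just a few lines.
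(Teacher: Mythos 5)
Your proof is correct and follows essentially the same route as the paper's: closedness via metrizability of the Chabauty topology together with a diagonal extraction, and conjugation-invariance by observing that conjugating the approximating cyclic subgroups yields the conjugated limit. You simply spell out the details that the paper leaves implicit.
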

\begin{proof}
The first property follows directly as the Chabauty topology is metrizable \cite{Bir}. The second is a consequence of the fact that if $\Delta$ is the geometric limit of the sequence $\rho_n(\bZ)$, then $g\Delta g^{-1}$ is the limit of the sequence $\rho_n^g(\bZ)$ defined by $\rho_n^g(1)=g\rho_n(1)g^{-1}$.
\end{proof}

\subsection{Geometric and algebraic convergence in $\SO_0(1,3)$} \label{sec:Jor}

We conclude the background section discussing J{\o}rgensen examples following Thurston \cite{thurston-notes}, as well as a description of some of the multiple applications of geometric convergence to the theory of Kleinian groups. 

J{\o}rgensen \cite{jor_onc} studied the sequence $(\rho_n\co\bZ \to \mathrm{Isom}^+(\HH_{\bR}^{3}) \cong {\sf PSL}(2, \bC))_{n\in\bN}$ of convex-cocompact representations defined by:
$$\rho_n(1) = g_n := \begin{bmatrix}
    \exp(\omega_n) & n \mathrm{sinh}(\omega_n)  \\
    0 & \exp(-\omega_n) 
\end{bmatrix},$$ 
 where $\omega_n = \frac{1}{n^2}+i\frac{\pi}{n}$. It is immediate to verify that the axis $\ax_n = \ax(\rho_n(1))$ of $g_n$ is the geodesic between $a_n = -\frac{n}{2}\in \bC$ and $\infty$. For any point $x\in \HH_{\bR}^3$ and for any $n\in \bN$, the element $\rho_n(1)$ moves $x$ around the cone $C_n(x)$ with axis $\ax_n$ and containing $x$. As $n$ goes to infinity, the distance between $\ax_n$ and $x$ diverges  and the surfaces $C_n$ limit to the horosphere in $\HH_{\bR}^3$ based at $\infty$ containing $x$. The elements $\rho_n(1)$ are chosen so that $\rho_n(n)$ translates $x$ along the Euclidean line in $\partial C_n(x)$ joining $x$ and $a_n$,  and so that the two elements $\rho_n(1)$ and $\rho_n(n)$ move $x$ by roughly the same amount. This is because the elements $\rho_n(1)$ were chosen to have translational part $\frac{1}{n^2}$ along the axis $\ax_n$ and rotational part $\frac{\pi}{n}$. We can then see that:
\begin{itemize}
  \item $\left(\rho_n\right)_{n \in \bN}$ converges algebraically to $\rho_{\infty}\co\bZ\to {\sf PSL}(2, \bC)$ defined by $\rho_{\infty}(1) =  \begin{bmatrix}
      1 & \pi i \\
      0 & 1 
  \end{bmatrix}  $; and
  \item $\left(\rho_n(\bZ)\right)_{n \in \bN}$ converges geometrically to $\Gamma_G = \left\langle \begin{bmatrix}
      1 & \pi i \\
      0 & 1
  \end{bmatrix},     \begin{bmatrix}
          1 & 1 \\
          0 & 1
      \end{bmatrix} \right\rangle$.
\end{itemize}

Thurston \cite{thu_hyp2}, building on J{\o}rgensen's work, discussed examples of representations with similar features, but where $\Gamma = \pi_1(\Sigma)$, and Kerckhoff--Thurston \cite{ker_non}, Anderson-Canary \cite{and_alg} and Brock \cite{bro_ite2, bro_ite} discussed other examples, each one with its own new `exotic' phenomena. Another important result in this direction was proven by Anderson and Canary \cite{and_cor} who used three-dimensional hyperbolic geometry to show that if the algebraic limit of a sequence of representations in $\PSL(2, \bC)$ does not contain any parabolic element, then the geometric and algebraic limit coincide. 

All these examples had multiple applications in Kleinian theory. In fact Kerckhoff--Thurston \cite{ker_non} example  was used to show that the action of the mapping class group on the Bers compactification of Teichm\"uller space is non-continuous.  Anderson and Canary's example \cite{and_alg} showed that the (marked) homeomorphism type does not necessarily vary continuously over the space 
$$\mathrm{AH}(M) = \faktor{\mathcal{DF}(\pi_1(M), \PSL(2,\bC))}{\PSL(2,\bC)}$$
 of all (marked) hyperbolic $3$--manifolds homotopy equivalent to a fixed compact, orientable, hyperbolizable $3$--manifold $M$ with boundary. Brock's example was key in proving that ending laminations do not vary continuously in $\mathrm{AH}(M)$, see \cite{bro_bou}. 

Another application of J\o rgensen example is to questions related to the topology of the space $\mathrm{AH}(M)$. The interior of $\mathrm{AH}(M)$ is well-understood thanks to the work Ahlfors, Bers, Kra, Marden, Maskit, Sullivan, Thurston and others, but the topology of $\mathrm{AH}(M)$, induced by the algebraic convergence mentioned above, is quite complicated and is not well understood in most cases. For example, Anderson and Canary \cite{and_alg}  showed that the connected components of its interior can \textit{bump}, that is, they can have intersecting closures. In the case of $3$--manifolds with incompressible boundary, Anderson, Canary and McCullough \cite{and_the} characterized exactly which components can bump. For $M = \Sigma_g \times [0,1]$  the interior of $\mathrm{AH}(M)$, the so-called quasi-Fuchsian space, is connected, but McMullen \cite{mcmullen:complex}  showed that it \textit{self-bumps}, that is that there are points $p$ in the boundary of $\mathrm{AH}(M)$ such that the intersection of the interior of $\mathrm{AH}(M)$ with sufficiently small neighbourhoods of $p$ is disconnected. Bromberg and Holt \cite{bro_sel} showed that self-bumping happens any time $M$ contains a primitive essential annulus.  Bromberg \cite{bro_the} and Magid \cite{mag_def}  showed that  $\mathrm{AH}(M)$ is not even locally connected. The study of the relations between geometric and algebraic convergence has also lead, through active research in the last years, to exclude self bumping at many boundary points, see \cite{bro_loc, bro_loc2} and references therein for more details. All of these examples come from studying sequences of representations converging algebraically, but not strongly. 

\section{Generalized J{\o}rgensen examples in $\SO_0(1,k+1)$ and $\SU(1,k+1)$}\label{sec_gen_jor}
\addtocontents{toc}{\protect\setcounter{tocdepth}{1}}

We consider sequences $(\rho_n\co\bZ\to \sf G)_{n\in\bN}$ of discrete and faithful representations algebraically converging to $\rho_\infty\co\bZ\to \sf G$, where $\sf G$ is a real semisimple Lie group of real rank one. In particular, we will analyze here only the cases ${\sf G} =\SO_0(1, k+1)$, and $\sf G= \SU(1, k+1)$ for $k\geq 2$,  and  write the proof in a language and notation that can be generalized to other semisimple Lie groups, since we plan to discuss that in an upcoming work \cite{MP2}. We will treat the two groups in two separate sections, but following a similar strategy.

The two subsections are structured as follows:  we determine the geometric data of the elements satisfying the aforementioned assumptions (Definition \ref{d.gdata1} and \ref{d.gdata2}); we compute the corresponding algebraic data, namely the matrices representing the elements with respect to the standard basis (Proposition \ref{p.algdata1} and \ref{p.algdata2}); we give necessary and sufficient conditions on the geometric data guaranteeing that a sequence converges (Proposition \ref{p.converge} and \ref{p.convergeC}), and we construct J{\o}rgensen examples `of maximal rank' (Propositions \ref{hyp_Jor} and \ref{hyp_Jor2}). Note that the proof of Theorem \ref{t.INTRO1} will be concluded in Section \ref{s.free} where we will  show that these examples are discrete.

\begin{remark}
In this section, as well as in the whole paper, we will only study loxodromic elements, since the application we have in mind are to convex cocompact (or more generally Anosov) representations, for which each element is loxodromic. It is however possible to do a parallel study of parabolic elements obtaining similar results with minor modifications. See also Remark \ref{parab_rk}.
\end{remark}

\subsection{Real hyperbolic space}\label{real_jor}

Let ${\sf G} = \SO_0(1, k+1)$ be the group of isometries of the real hyperbolic space $\HH^{k+1}_{\mathbf{R}}$, which we can identify with the Riemannian symmetric space $\HH^{k+1}_{\mathbf{R}} = \faktor{\SO_0(1, k+1)}{\SO(k+1)}$. 

We first proceed to define the geometric data associated to a loxodromic element $g\in\SO_0(1,k+1)$. We use the upper half-space model for $\HH^{k+1}_\bR$ discussed in Section \ref{rank_one}, where we consider the quadratic form $\Psi$ of signature $(1, k+1)$ defined by the matrix $Q = \begin{bmatrix}
 &  & 1\\
 & -E_{k} &\\
1 & &  
\end{bmatrix}$. We define $$\SO_0(1, k+1) = \{A \in \GL(k+2, \bR) \mid A^T Q A = Q, \mathrm{det}(A) = 1\}.$$ The compact centralizer of the geodesic in  $\HH_\bR^{k+1}$  with endpoints $0$ and $\infty$ is given by
$$\SO(k) \cong  \left\{B= \begin{bmatrix}
1 &  & \\
 & A &\\
 & &  1
\end{bmatrix} 
\in \GL(k+2, \bR) \left|\; \begin{array}{l}A^T A =  E_{k},\\ \mathrm{det}(A) = 1\end{array}\right.\right\} \subset \SO_0(1, k+1).$$  
The stabilizer ${\sf G}_\infty$ of $\infty$ is the model for the parabolic subgroups of $\sf G$. We already know that 
$$ {\sf G}_\infty = \mathrm{Stab}_G(\infty) = \left\{ 
 B = \begin{bmatrix} 
 \lambda  & \lambda a^T A &  \frac{\lambda |a|^2}{2}\\
 0 & A & a\\
 0 & 0 & \lambda^{-1}
 \end{bmatrix} \left|
 \begin{array}{l} 
  \lambda \in \bR^*, \\ a\in \bR^k, \\A\in \SO(k)
  \end{array}\right.\right\}.$$ 
Observe that $\lambda = \pm e^y$ for $y \in \bR$, but it will be enough for our purposes to treat the case in which $\lambda$ is positive. If $y \neq 0$, then $B$ represents a loxodromic element with two fixed points and with translation length $|y|$. Every loxodromic element in ${\sf G}_\infty$ can be uniquely determined by:
\begin{itemize}
  \item its second fixed point $\mathbf{x} \in \bR^k\cong\partial\HH^k_\bR\setminus\{\infty\}$;
  \item its hyperbolic translation part $y \in \bR$;
  \item its elliptic part, which corresponds to a matrix $A\in \SO(k)$.
\end{itemize}  
In addition, the matrix $A \in \SO(k)$ can be diagonalised, that is, up to choosing a suitable orthonormal basis for $\bR^k$, we can represent $A$ as follows:
\begin{equation}\label{e.A}A = A_{\theta_1, \ldots, \theta_l} = \begin{pmatrix} 
     \begin{matrix}
       \cos \theta_1  & \sin \theta_1 \\
        -\sin \theta_1 & \cos \theta_1
     \end{matrix}
     &  &  \\
     & \ddots & \\
   & &
     \begin{matrix}
        \cos \theta_l  & \sin \theta_l \\
         -\sin \theta_l & \cos \theta_l
      \end{matrix}
\end{pmatrix} \text{ if } k = 2l\in 2\mathbf{N}\end{equation} 
or 
\begin{equation}\nonumber
A  = A_{\theta_1, \ldots, \theta_l} = \begin{pmatrix} 
     \begin{matrix}
       \cos \theta_1  & \sin \theta_1 \\
        -\sin \theta_1 & \cos \theta_1
     \end{matrix}
     &  &  &\\
     & \ddots & & \\
   & &
     \begin{matrix}
        \cos \theta_l  & \sin \theta_l \\
         -\sin \theta_l & \cos \theta_l
      \end{matrix} &\\
           &  & &1\\
\end{pmatrix} \text{ if } k = 2l+1\in 2\mathbf{N}+1.
\end{equation}

Hence we have the following definitions:
\begin{Definition}[Well positioned $g\in\SO_0(1,k+1)$]
Let $g\in\SO_0(1,k+1)$ be a loxodromic element with Iwasawa decomposition $g=h_ge_g$.  We say that \emph{$g$ is well positioned}, if the attractive fixed point of $g$ is $\infty$ and that the elliptic element is block diagonal in the standard basis. 
\end{Definition}
\begin{Definition}[Geometric Data for well positioned $g\in\SO_0(1,k+1)$]\label{d.gdata1}
The \emph{geometric data} associated to $g$ is the $(k+l+1)$--tuple
$$\left(\mathbf{x}, y, \theta_1, \ldots, \theta_l\right) \in \bR^k \times \bR_+ \times  (\bR/2\pi \bZ)^l,$$
where 
\begin{itemize}
\item $\mathbf{x}$ are the coordinates of the repulsive fixed point of $g$, 
\item $y$ is the translation length of $h_g$, 
\item $\theta_1, \ldots, \theta_l$ are the rotation angles of $e_g$.
\end{itemize}

\end{Definition}
\begin{Remark}
For any loxodromic element $g$ it is possible to find $k\in\sf K$ such that $g^k=kgk^{-1}$ is well positioned. Furthermore there exists a unique such conjugate $g^k$ with the additional property that $x_{2i+1}=0$ for all $i\in \{1,\ldots,l\}$, $x_{2}\leq x_{4}\ldots\leq x_{2l}$ and $\theta_i\leq  \theta_{i+1}$ if $x_{2i}=x_{2i+2}$.
\end{Remark}

The next proposition shows how to compute the algebraic data corresponding to a given geometric data for $g$, that is, the matrix representing the element $g$.

\begin{prop}\label{p.algdata1}
If the well positioned loxodromic element $g\in\SO_0(1,k+1)$ has geometric data $\left(\mathbf{x}, y, \theta_1, \ldots, \theta_l\right)\in \bR^k \times \bR_+ \times  (\bR/2\pi \bZ)^l$, then the matrix $M_{(\mathbf{x}, y, \theta_1, \ldots, \theta_l)}$ representing $g$ is given by 
$$M_{(\mathbf{x}, y, \theta_1, \ldots, \theta_l)} = \begin{pmatrix} 
        \lambda  & \lambda \mathbf{v}^T A & \frac{\lambda |\mathbf{v}|^2}{2}\\
        0 & A & \mathbf{v}\\
        0 & 0 & \lambda^{-1}
        \end{pmatrix},$$
where 
\begin{itemize}
\item $A=A_{\theta_1, \ldots, \theta_l}$ is the matrix in Equation \eqref{e.A},
\item $\lambda=e^{y} \in (1, \infty)$,
\item $\mathbf{v} = -A \mathbf{x} +\lambda^{-1} \mathbf{x}\in \bR^k.$
\end{itemize}
\end{prop}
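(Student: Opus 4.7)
The plan is to exploit the fact that well-positioning fixes most of the parameters of $g$, so that only the translation vector $a$ in the Langlands-type decomposition of the matrix remains to be identified, and this will be pinned down by the position of the repulsive fixed point.

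First, since $g$ is well positioned its attractive fixed point is $\infty$, hence $g \in {\sf G}_\infty$. By the general form of elements of ${\sf G}_\infty$ recalled at the start of the subsection I can write
$$g \;=\; \begin{bmatrix} \lambda & \lambda a^T A & \tfrac{\lambda|a|^2}{2} \\ 0 & A & a \\ 0 & 0 & \lambda^{-1} \end{bmatrix}$$
for some $\lambda \in \bR_{>0}$, $a \in \bR^k$, and $A \in \SO(k)$. The Iwasawa decomposition $g = h_g e_g$ then identifies the diagonal factor $\mathrm{diag}(\lambda, E_k, \lambda^{-1})$ with the hyperbolic part $h_g$, whose translation length is $y$; this forces $\lambda = e^y$. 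Similarly the block $A$ is the elliptic part $e_g$, and the well-positioned hypothesis together with Definition \ref{d.gdata1} say precisely that $A$ is block diagonal in the standard basis with rotation angles $\theta_1,\ldots,\theta_l$, i.e.\ $A = A_{\theta_1,\ldots,\theta_l}$.

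Next I would determine $a$ from the repulsive fixed point $\mathbf{x}$. Using the parametrization $\phi_\bR$ of Proposition \ref{l.param}, the boundary point $\mathbf{x}$ lifts to the null vector $\phi_\bR(\mathbf{x}) = \bigl(\tfrac12 |\mathbf{x}|^2,\, \mathbf{x},\, 1\bigr)^T$. Imposing that $g\cdot\phi_\bR(\mathbf{x}) \in \bR^\ast\phi_\bR(\mathbf{x})$, the bottom coordinate of $g\cdot\phi_\bR(\mathbf{x})$ is $\lambda^{-1}$, so the projective scalar must equal $\lambda^{-1}$. Reading off the middle coordinate then gives the affine-linear equation
$$A\mathbf{x} + a \;=\; \lambda^{-1}\mathbf{x},$$
whence $a = -A\mathbf{x} + \lambda^{-1}\mathbf{x} = \mathbf{v}$, as claimed.

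The top coordinate identity is automatic: $g$ preserves the null cone $V^0$ of $\Psi$, so once the last two coordinates of $g\cdot\phi_\bR(\mathbf{x})$ agree projectively with those of $\phi_\bR(\mathbf{x})$, the first must as well. For reassurance one can verify directly, using $A^TA = E_k$, that $\tfrac{\lambda}{2}|\mathbf{x}|^2 + \lambda\, \mathbf{v}^T A\mathbf{x} + \tfrac{\lambda}{2}|\mathbf{v}|^2 = \tfrac{\lambda^{-1}}{2}|\mathbf{x}|^2$. There is no real obstacle here: the proof is essentially bookkeeping. The conceptual key is simply that, in the upper half-space/Siegel coordinates provided by $\phi_\bR$, the fixed-point condition for an element of ${\sf G}_\infty$ becomes an affine-linear equation in $a$.
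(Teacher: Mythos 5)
Your proof is correct, but it takes a mildly different route from the paper's. The paper proceeds constructively: it conjugates the block-diagonal element $\mathrm{diag}(\lambda, A, \lambda^{-1})\in L$ by the unique unipotent $u_{\mathbf x}$ sending $0$ to $\mathbf x$, and reads off $M_{(\mathbf{x}, y, \theta_1, \ldots, \theta_l)}=u_{\mathbf x}\,\mathrm{diag}(\lambda, A, \lambda^{-1})\,u_{\mathbf x}^{-1}$ from the triple matrix product. You instead start from the general parametric form of an element of ${\sf G}_\infty$ and pin down the remaining free parameter $a$ by imposing the fixed-point condition at $\phi_{\mathbf{R}}(\mathbf x)$; the middle block gives the affine-linear equation $A\mathbf x + a = \lambda^{-1}\mathbf x$, and the top-right entry is forced by preservation of the null cone (and you verify it directly, correctly). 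Your approach avoids the full block multiplication and makes transparent \emph{why} $\mathbf v$ has the stated form, at the cost of implicitly invoking the uniqueness statement (``every loxodromic element of ${\sf G}_\infty$ is determined by its second fixed point, translation length and elliptic part'') that the paper records just before the definitions. One phrasing to tighten: the factors $\mathrm{diag}(\lambda, E_k,\lambda^{-1})$ and $\mathrm{diag}(1,A,1)$ appearing in the ${\sf G}_\infty$-form of $g$ are not literally the commuting Jordan factors $h_g$ and $e_g$ (those are their conjugates by $u_{\mathbf x}$, since $h_g$ and $e_g$ must fix $\mathbf x$ rather than $0$); but translation length and rotation angles are conjugation invariants, so your identifications $\lambda=e^y$ and $A=A_{\theta_1,\ldots,\theta_l}$ stand.
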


\begin{proof}
The element $g$  is conjugated to an element in the stabilizer $L$ of the pair $(0,\infty)$ via the unique unipotent element $u_{\bf x}$ fixing $\infty$ and such that $u_{\bf x}\cdot 0= {\bf x}$. In particular,
$$u_{\bf x}=\begin{bmatrix} 
    1  & \mathbf{x}^T  & \frac{1}{2}|\mathbf{x}|^2\\
    0 & 1 & \mathbf{x}\\
    0 & 0 & 1
    \end{bmatrix} .$$
It is easy to compute that $L$ has the following form:
$$ L = P \cap \mathrm{Stab}_G(\mathbf{0}) = \left\{ 
 B = \begin{bmatrix} 
 \lambda  & 0 & 0\\
 0 & A & 0\\
 0 & 0 & \lambda^{-1}
 \end{bmatrix} \left|
  \begin{array}{l} 
   \lambda \in \bR_{+},\\ A\in \SO(k)
   \end{array}\right.\right\}.$$
As a result, the matrix $M_{(\mathbf{x}, y,\theta_1, \ldots, \theta_l)}$ associated with the geometric data $(\mathbf{x}, y,\theta_1, \ldots, \theta_l) $, is given by:
 \begin{equation}
  M_{(\mathbf{x}, y, \theta_1, \ldots, \theta_l)} =  \begin{bmatrix} 
    1  & \mathbf{x}^T  & \frac{1}{2}|\mathbf{x}|^2\\
    0 & 1 & \mathbf{x}\\
    0 & 0 & 1
    \end{bmatrix}
 \begin{bmatrix} 
     \lambda  & 0  & 0\\
     0 & A & 0\\
     0 & 0 & \lambda^{-1}
     \end{bmatrix} 
\begin{bmatrix} 
       1  & -\mathbf{x}^T  & \frac{1}{2}|\mathbf{x}|^2\\
       0 & 1 & -\mathbf{x}\\
       0 & 0 & 1
       \end{bmatrix} =
 \begin{bmatrix} 
        \lambda  & \lambda \mathbf{v}^T A & \frac{\lambda |\mathbf{v}|^2}{2}\\
        0 & A & \mathbf{v}\\
        0 & 0 & \lambda^{-1}
        \end{bmatrix},
 \end{equation}
 where $$\mathbf{v} = -A \mathbf{x} +\lambda^{-1} \mathbf{x}\in \bR^k.$$ 
\end{proof}

With this at hand, it is easy to give necessary and sufficient conditions on the geometric data of a sequence of well positioned loxodromic elements $g_n \in {\sf G}_\infty$ guaranteeing that the sequence converges in $\SO_0(1,k+1)$.
\begin{prop}\label{p.converge}
Let $(g_m)_{m\in\N}\subset {\sf G}_\infty\subset\SO_0(1,k+1)$ be a sequence of well positioned loxodromic elements with geometric data $$\left(\mathbf{x}_m, y_m, \theta_{1,m}, \ldots, \theta_{l,m}\right)\in \bR^k \times \bR_+ \times   (\bR/2\pi \bZ)^l.$$
Assume, furthermore, that
\begin{itemize}
\item$x_{2i-1, m}= 0$ for all $i = 1,\ldots, l$
\item  $x_{2,m}\leq x_{4,m}\ldots\leq x_{2l,m}$ 
\item $x_{2l,m}\to \infty$.
\end{itemize} Then the sequence converges algebraically if and only if   
\begin{enumerate}[(a)]
\item $\theta_{i,m}x_{2i,m}$ converges for all $i = 1, \ldots, l$; 
\item $y_m x_{2l,m}$ converges.
\end{enumerate}
In  case also $x_{2,m}$ diverges, then the algebraic limit is the unipotent matrix associated to the vector $\bf v$ with coordinates 
$$\left\{\begin{array}{rl}
v_{2j-1}&= -\displaystyle{\lim_m \theta_{j,m}x_{2j, m}}\\
v_{2j}&= -\displaystyle{\lim_m y_mx_{2j,m}}\\
v_{2l+1}&= 0.
\end{array}\right.$$
\end{prop}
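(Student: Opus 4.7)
My plan is to reduce the statement to an entry-by-entry convergence analysis of the matrix
$$M_m = \begin{pmatrix}
        \lambda_m  & \lambda_m \mathbf{v}_m^T A_m & \tfrac{1}{2}\lambda_m |\mathbf{v}_m|^2\\
        0 & A_m & \mathbf{v}_m\\
        0 & 0 & \lambda_m^{-1}
        \end{pmatrix}$$
given by Proposition \ref{p.algdata1}, specialized by the assumption $x_{2i-1,m}=0$. A direct computation using the block form \eqref{e.A} of $A_m$ and the formula $\mathbf{v}_m = -A_m\mathbf{x}_m + \lambda_m^{-1}\mathbf{x}_m$ gives the entries
$$v_{2i-1,m} = -x_{2i,m}\sin\theta_{i,m}, \qquad v_{2i,m} = x_{2i,m}(\lambda_m^{-1}-\cos\theta_{i,m}).$$
Convergence of $M_m$ in $\SO_0(1,k+1)$ is then equivalent to the simultaneous convergence of $\lambda_m=e^{y_m}$, of each $\theta_{i,m}$ modulo $2\pi$, and of each coordinate of $\mathbf{v}_m$; the remaining entries follow from these.

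For the \emph{only if} direction I would first focus on the index $i=l$. Since $x_{2l,m}\to\infty$, convergence of $v_{2l-1,m}$ forces $\sin\theta_{l,m}\to 0$, and convergence of $v_{2l,m}$ forces $\lambda_m^{-1}-\cos\theta_{l,m}\to 0$. Combined with the fact that $A_m$ converges (so $\theta_{l,m}$ converges modulo $2\pi$) and $\lambda_m^{-1}>0$, this pins down $\theta_{l,\infty}=0$ and $\lambda_\infty=1$, i.e.\ $y_m\to 0$. I then Taylor expand $\sin\theta_{l,m}=\theta_{l,m}+O(\theta_{l,m}^3)$ to rewrite $v_{2l-1,m}=-x_{2l,m}\theta_{l,m}+(x_{2l,m}\theta_{l,m})\cdot O(\theta_{l,m}^2)$, concluding that $\theta_{l,m}x_{2l,m}$ converges. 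For $v_{2l,m}$ I use
$$\lambda_m^{-1}-\cos\theta_{l,m}=-y_m+\tfrac{1}{2}\theta_{l,m}^2+O(y_m^2+\theta_{l,m}^4),$$
together with $x_{2l,m}\theta_{l,m}^2=(x_{2l,m}\theta_{l,m})\cdot\theta_{l,m}\to 0$, to obtain $v_{2l,m}=-x_{2l,m}y_m+o(1)$, which yields (b). For indices $i<l$, I would run the analogous argument splitting into the cases $x_{2i,m}\to\infty$ (where the argument above repeats verbatim) and $x_{2i,m}$ bounded (where $\theta_{i,m}\to\theta_{i,\infty}$ by convergence of $A_m$ and the limits of $x_{2i,m}\sin\theta_{i,m}$ and $x_{2i,m}(\lambda_m^{-1}-\cos\theta_{i,m})$ force $x_{2i,m}\theta_{i,m}$ to converge).

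The \emph{if} direction runs the same Taylor expansions in reverse. Condition (b) with $x_{2l,m}\to\infty$ gives $y_m\to 0$, so $\lambda_m\to 1$; for each $i$ with $x_{2i,m}$ unbounded, condition (a) gives $\theta_{i,m}\to 0$, so each block of $A_m$ tends to the identity. The expansions above then show that $v_{2i-1,m}=-x_{2i,m}\theta_{i,m}+o(1)$ and $v_{2i,m}=-x_{2i,m}y_m+o(1)$, both of which converge thanks to (a), (b) and the monotonicity $x_{2i,m}\leq x_{2l,m}$ (which controls the ratio needed for $y_m x_{2i,m}$ to converge when $x_{2i,m}$ diverges).

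Finally, for the explicit identification of the limit when $x_{2,m}\to\infty$: the monotonicity assumption forces $x_{2i,m}\to\infty$ for every $i$, so $\theta_{i,m}\to 0$, $\lambda_\infty=1$, and $A_\infty=\mathrm{Id}$; the computation above directly reads off $v_{2j-1,\infty}=-\lim\theta_{j,m}x_{2j,m}$, $v_{2j,\infty}=-\lim y_m x_{2j,m}$, and $v_{2l+1,\infty}=0$ when $k=2l+1$, yielding the claimed unipotent limit. The main technical obstacle I anticipate is the careful bookkeeping of the sub-leading Taylor terms (in particular showing that $x_{2i,m}\theta_{i,m}^2\to 0$ cancels the quadratic contribution from $\cos\theta_{i,m}$), and separately treating the indices $i<l$ for which $x_{2i,m}$ may stay bounded.
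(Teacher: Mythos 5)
Your proposal is correct and follows essentially the same route as the paper's own proof: express $g_m$ via the algebraic data of Proposition \ref{p.algdata1}, use the norm comparison $\|A_m\mathbf{x}_m\|=\|\mathbf{x}_m\|$ versus $e^{-y_m}\|\mathbf{x}_m\|$ to force $y_m\to 0$, and then Taylor expand $v_{2i-1,m}=-x_{2i,m}\sin\theta_{i,m}$ and $v_{2i,m}=x_{2i,m}(e^{-y_m}-\cos\theta_{i,m})$ to read off conditions (a), (b) and the limit vector. Your handling of the quadratic term $x_{2i,m}\theta_{i,m}^2=(x_{2i,m}\theta_{i,m})\theta_{i,m}\to 0$ and your explicit case split for bounded $x_{2i,m}$ are, if anything, slightly more careful than the paper's write-up.
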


\begin{proof}
In order to understand the conditions under which the elements $g_m$ converge as $m$ goes to infinity, we need to compute approximations of the elements $\mathbf{v}_m$ determining the matrix $M_m$ associated, via Proposition \ref{p.algdata1}, to the geometric data of $g_m$. To this aim, observe that, with our choice of basis of $\bR^k$, the vectors $\mathbf{v}_m $ are given by 
\begin{equation}\label{v_m_real}
	\mathbf{v}_m = -A_m \mathbf{x}_m  + \lambda_m^{-1}\mathbf{x}_m.
\end{equation} 

We now prove the first  statement which will also allow us to compute the suitable Taylor expansions to verify the second statement.
Since, by assumption, the last coordinate $x_{2l,m}$ diverges, the norm of the vector $\mathbf{x}_m$ diverges as $m$ goes to infinity. Since $\mathbf{v}_m$ is the difference of two vectors of norm $\|A_m \mathbf{x}_m\| = \|\mathbf{x}_m\|$ and $\|\mathbf{x}_m\||\lambda_m|^{-1} = \|\mathbf{x}_m\|e^{-y_m}$, we deduce that $y_m$ needs to converge to zero. In order to deduce that $\theta_i$ need to converge to zero for all $i = 1, \ldots, l$, we need to expand Equation \eqref{v_m_real}. Using that $x_{2i+1,m}=0$ for all $i$, we deduce from Proposition \ref{p.algdata1} that:
$$\mathbf{v}_m= 
   \begin{bmatrix} 
   - \sin \theta_{1, m} x_{2,m}\\
   - \cos \theta_{1, m} x_{2,m}+ \exp(-y_m)x_{2,m}\\
   \vdots\\
   - \sin \theta_{l, m} x_{2l,m}\\
   - \cos \theta_{l, m} x_{2l,m}+ \exp(-y_m)x_{2l,m}
   \end{bmatrix}  \text{ if } k = 2l\in 2\mathbf{N}$$ 
or 
   $$\mathbf{v}_m 
   = \begin{bmatrix} 
    - \sin \theta_{1, m} x_{2,m}\\
   - \cos \theta_{1, m} x_{2,m}+ \exp(-y_m)x_{2,m}\\
   \vdots\\
   - \sin \theta_{l, m} x_{2l,m}\\
  - \cos \theta_{l, m} x_{2l,m}+ \exp(-y_m)x_{2l,m}\\
   0
   \end{bmatrix} \text{ if } k = 2l+1\in 2\mathbf{N}+1,$$
    Observe that the $(2i-1)$--th coordinate of $\mathbf{v}_m$ is given by $- \sin \theta_{i, m} x_{2i,m}$, and the $(2i)$--th coordinate of $\mathbf{v}_m$ is given by $- \cos \theta_{i, m} x_{2i,m}+ \exp(-y_m)x_{2i,m}$. This implies that $\theta_i$ needs to go to zero as fast as the inverse of $x_{2i}$, if $x_{2i}$ diverges. In particular the quantity $x_{2j,m} o(\theta_{j,m})$  limits to zero. Considering a first order Taylor expansion of the involved functions, we can then conclude the first part of the Proposition.

In order to verify the second statement we consider, again, a first order Taylor expansion of the involved functions. Using the little-o notation, we can see that
$$v_{2j-1, m} = \left(-\theta_{j, m} +o(\theta_{j, m}^2)\right) x_{2j,m}$$ 
and 
$$v_{2j, m} =  \left(-y_m  + o(\theta_{j, m}) + o(y_m)\right) x_{2j,m}.$$ 
From this, the result follows.
\end{proof}

We can now construct sequences of cyclic subgroups whose geometric limit contains a subgroup isomorphic to $\mathbf{Z}^{l+1}$. We will come back to these sequences in Section \ref{s.free}, and use them to construct interesting geometric limits of free groups. As a consequence we will deduce that their geometric limit is in fact discrete.
\begin{prop}\label{hyp_Jor}
  Let $\left(\rho_n\co\bZ\to \SO_0(1, k+1)\right)_{n\in\bN}$ be the sequence of convex-cocompact representations such that $\rho_n(1)\in {\sf G}_\infty$ are the well positioned loxodromic elements defined by the geometric data
$$\left(\mathbf{x}_n, \frac{1}{n^{l+1}}, \frac{2\pi}{n}, \ldots, \frac{2\pi}{n^{l}}\right),$$
where 
 $$\bR^k \ni \mathbf{x}_n = (x_1, \ldots, x_k) = \begin{cases}
(0, n \ldots, 0, n)& k = 2l\\
(0,n, \ldots, 0, n, 0)& k = 2l+1
\end{cases}.$$
Then the sequence has a discrete and faithful algebraic limit and its geometric limit contains a group isomorphic to $\bZ^{l+1}$, where $l = \floor*{\frac{k}{2}}$.
\end{prop}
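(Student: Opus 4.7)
The idea is to apply Proposition~\ref{p.converge} first to the generators $(\rho_n(1))$ and then successively to each of the power sequences $(\rho_n(n^j))$ for $j=1,\ldots,l$. Each of these will be shown to converge in $\SO_0(1,k+1)$ to a distinct non-trivial unipotent element $u_j \in N_\bR$, and the $u_j$'s will furnish $l+1$ linearly independent vectors in $N_\bR \cong \bR^k$, generating the desired copy of $\bZ^{l+1}$ inside any Chabauty limit.

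The hypotheses of Proposition~\ref{p.converge} hold for $(\rho_n(1))$: the odd coordinates of $\mathbf{x}_n$ vanish, the even ones all equal $n$, and $x_{2l,n}=n\to\infty$. A direct substitution yields $\theta_{i,n} x_{2i,n} = 2\pi/n^{i-1}$, which converges to $2\pi$ for $i=1$ and to $0$ otherwise, while $y_n x_{2l,n} = 1/n^{l} \to 0$. The formula in the Proposition then identifies the algebraic limit as $u_0 = \psi_\bR(-2\pi\,e_1)$. Since $u_0$ is non-trivial and $\bZ$ is torsion-free, $\rho_\infty$ is injective; being cyclic and generated by a non-identity element of $N_\bR$, its image is automatically discrete.

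The heart of the argument is the power computation. Since $g_n = \rho_n(1)$ is well positioned, so is $g_n^{n^j}$: the fixed points are unchanged, the hyperbolic translation length becomes $1/n^{l+1-j}$, and the rotation angles become $2\pi n^j/n^i \pmod{2\pi}$, which vanishes whenever $i\le j$ (as $n^{j-i}\in\bZ$) and equals $2\pi/n^{i-j}$ for $i>j$. Feeding this into Proposition~\ref{p.converge}, the only non-vanishing contributions to the limit vector $\mathbf{v}_j$ are $-\lim_n \theta'_{j+1,n}\,x_{2(j+1),n} = -2\pi$ in the coordinate $2j+1$, provided $j<l$, and $-\lim_n y'_n\,x_{2i,n} = -1$ in every even coordinate, provided $j=l$. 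One concludes $u_j = \psi_\bR(\mathbf{v}_j)$ with $\mathbf{v}_j = -2\pi\, e_{2j+1}$ for $j=0,\ldots,l-1$ and $\mathbf{v}_l = -(e_2+e_4+\cdots+e_{2l})$.

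Finally, the vectors $\mathbf{v}_0,\ldots,\mathbf{v}_{l-1}$ lie in the span of the odd-indexed basis vectors and are manifestly linearly independent, while $\mathbf{v}_l$ lies in the span of the even-indexed ones, so the full collection is $\bZ$-independent; the subgroup of the abelian group $N_\bR$ it generates is therefore isomorphic to $\bZ^{l+1}$ and discrete. By compactness of the Chabauty topology we may pass to a subsequence along which $(\rho_n(\bZ))$ converges geometrically to some $\Gamma_G$, and condition~(2) of Definition~\ref{chabauty} places each $u_j$ in $\Gamma_G$, so $\bZ^{l+1} \hookrightarrow \Gamma_G$. The only real bookkeeping challenge is tracking which rotation angles are killed modulo $2\pi$ for each power index $j$; once that is done, the structure of the limit vectors $\mathbf{v}_j$ and their $\bZ$-linear independence are automatic.
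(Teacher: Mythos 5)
Your proposal is correct and follows essentially the same route as the paper: compute the geometric data of the powers $\rho_n(n^r)$ for $r=0,\ldots,l$, apply Proposition~\ref{p.converge} to identify their limits as the unipotent elements associated to $-2\pi e_{2r+1}$ (for $r<l$) and $-(e_2+\cdots+e_{2l})$ (for $r=l$), and conclude from linear independence that the geometric limit contains a copy of $\bZ^{l+1}$. Your added remarks on passing to a Chabauty-convergent subsequence and on the discreteness and faithfulness of the algebraic limit are small, correct refinements of details the paper leaves implicit.
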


\begin{proof}
It is easy to determine, from the geometric data of $\rho_n(1) =\rho_n(n^0) =\colon g_{0,n} $, the geometric data of $\rho_n(n^r) = g_{r,n}$ for all  $r = 0, \ldots, l$. In fact, one can see that the elements $g_{r,n}$ are defined by the following geometric data:
\begin{itemize}
  \item the second fixed point remains $\mathbf{x}_n = \begin{cases}
(0, n \ldots, 0, n)& k = 2l\\
(0, n \ldots, 0, n, 0)& k = 2l+1
\end{cases};$
  \item the translation length is given by $y_{r,n} = n^{-(l+1-r)} \in \bR_+$;
  \item the rotation angles are given by 
  $$ \theta_{j, r,n} = \frac{2\pi}{n^{j-r}} \text{for } j= 1, \ldots, l.$$
\end{itemize}
 For the convenience of the reader we tabulated the geometric data for the elements $g_{r,n}$ in Table \ref{t.1}. 
\begin{table}[ht]
   \renewcommand*{\arraystretch}{2.3}
    \centering
\begin{tabular}{|l||l|l|l|l|}
\hline
&$\mathbf{x}\in \bR^k$&$\theta_{1}$&$\theta_{j}\;(j = 2, \ldots, l)$&$y$\\
\hline\hline
$g_{0,n}=\rho_n(1)$
&\(\begin{cases}
(0, n \ldots, 0, n)^T& k = 2l\\
(0, n \ldots, 0, n, 0)^T& k = 2l+1
\end{cases}\)
&$\displaystyle\frac{2\pi}{n}$
&\(\displaystyle\frac{2\pi}{n^j}\)&\(\displaystyle\frac{2\pi}{n^l}\)
\\
\hline
\(\begin{aligned}g_{a,n}=\rho_n(n^a) \\{}_{(a = 1, \ldots, l-1)}\end{aligned}\)
&\(\begin{cases}
(0, n \ldots, 0, n)^T& k = 2l\\
(0, n \ldots, 0, n, 0)^T& k = 2l+1
\end{cases}\)
&$0$
&\(\begin{cases}
0&j\leq a\\
\displaystyle\frac{2\pi}{n^{j-a}}&j>a
\end{cases}\)
&\(\displaystyle\frac{1}{n^{l-a+1}}\)\\
\hline
$g_{l,n}=\rho_n(n^l)$
&\(\begin{cases}
(0, n \ldots, 0, n)^T& k = 2l\\
(0, n \ldots, 0, n, 0)^T& k = 2l+1
\end{cases}\)&$0$&$0$&\(\displaystyle\frac{1}{n}\)\\
\hline
\end{tabular}
\medskip
\caption{The geometric data for $\rho_n(n^r) = g_{r, n}\in \SO_0(1, k+1)$ for $r = 0, \ldots, l$.}\label{t.1} 
\end{table}

Using Proposition \ref{p.converge} we deduce that the sequences $(g_{r,n})_{n\in\N}$ converge to the elements 
$$g_r:= \begin{bmatrix} 
   1  & \mathbf{v}_r^T  & \frac{1}{2}|\mathbf{v}_r|^2\\
   0 & I & \mathbf{v}_r\\
   0 & 0 & 1
   \end{bmatrix}$$
where the vectors $\mathbf{v}_r$ are the limits, as $n$ goes to infinity, of the vectors $\mathbf{v}_{r,n}$ appearing in the algebraic data of the element $g_{r,n}$.
As a result we deduce that the geometric limit of the sequence $(\rho_n(\bZ))_{n\in\bN}$ contains the group, isomorphic to $\Z^l$ generated by the matrices $g_0, \ldots, g_l$.
 For the convenience of the reader we tabulated such values in Table \ref{t.2}. Note that the last column exists only when $k = 2l+1$ is odd.  When $k$ is even, one should read the table, ignoring the last column.

\begin{table}[ht]
\renewcommand*{\arraystretch}{1.3}
\begin{center}
\begin{tabular}{|l||l|l|l|}
\hline
&$(v_{1}, v_{2})$&$(v_{2j-1}, v_{2j}) \; (j = 2, \ldots, l)$& $v_{2l+1}$ \\
\hline\hline$g_0$&$(-2\pi, 0)$&$(0, 0)$& $0$\\
\hline
$g_{a} \;{}_{(a = 1, \ldots, l-1)}$
&$(0,0)$&\(\displaystyle\left\{\begin{array}{ll}(0, 0)&j\neq a+1\\(-2\pi, 0)&j=a+1\end{array}\right.\)& $0$\\
\hline
$g_{l}$&$(0, -1)$&$(0, -1)$& $0$\\
\hline
\end{tabular}
\medskip
\caption{The vectors $\mathbf{v}_r = (v_{1,r}, \ldots, v_{k,r})$ for $r = 0, \ldots, l$  defining the matrices $g_0, \ldots, g_l\in \SO_0(1,k+1)$.}\label{t.2}
\end{center}
\end{table}

\end{proof}
Proposition \ref{hyp_Jor} can be easily modified to obtain the following:
\begin{Corollary}
  Let $l = \floor*{\frac{k}{2}}$. For any $i \in \{ 1, \ldots, l+1 \}$, there exists a sequence $\left(\rho_n\co\bZ\to \SO_0(1,k+1)\right)_{n\in\bN}$ of convex-cocompact representations whose algebraic limit is a discrete and faithful representation $\rho_{\infty}\co\bZ\to \SO_0(1,k+1)$ and such that the geometric limit $\Gamma_G$ contains a group isomorphic to $\bZ^{i}$.
\end{Corollary}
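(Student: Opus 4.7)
The plan is to reduce to Proposition~\ref{hyp_Jor} by working in a smaller real hyperbolic space and then embedding. The case $i = l+1$ is already covered by Proposition~\ref{hyp_Jor} itself, so I only need to handle $i \le l$ (the case $i = 1$ being immediate by taking a convergent sequence of convex-cocompact representations with discrete faithful limit).

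First, given $i$ with $2 \le i \le l$, I would set $k' := 2(i-1)$, so that $\floor*{k'/2} = i-1$ and $k' \le 2l \le k$. There is a natural closed block-diagonal embedding
\[
\iota \co \SO_0(1, k'+1) \hookrightarrow \SO_0(1, k+1)
\]
induced by the inclusion of $\bR^{1, k'+1}$ into $\bR^{1, k+1}$ as the span of $e_0, e_1, \dots, e_{k'}, e_{k+1}$, extended by the identity on the orthogonal negative-definite complement. This $\iota$ is injective, has closed image, and preserves the dynamical classification of Section~\ref{s.isom}: in particular, loxodromic elements are sent to loxodromic elements with the same translation length and same fixed-point structure.

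Next, I would apply Proposition~\ref{hyp_Jor} inside $\SO_0(1, k'+1)$ to obtain a sequence $(\rho_n')_{n\in\bN}$ of convex-cocompact representations of $\bZ$ whose algebraic limit $\rho_\infty'$ is discrete and faithful, and whose geometric limit $\Gamma_G'$ contains a subgroup isomorphic to $\bZ^{l'+1} = \bZ^i$. Define $\rho_n := \iota \circ \rho_n'$. Then: convex-cocompactness of $\rho_n$ follows because $\rho_n'(1)$ is loxodromic and $\iota$ preserves this property; discreteness and faithfulness of the algebraic limit $\rho_\infty = \iota \circ \rho_\infty'$ follow because $\iota$ is a closed embedding of Lie groups; and the geometric limit of $\iota(\rho_n'(\bZ))$ contains $\iota(\Gamma_G') \supseteq \iota(\bZ^i) \cong \bZ^i$.

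The only point requiring a line of verification — hardly a genuine obstacle — is that Chabauty convergence is preserved under closed embeddings of locally compact groups. This is immediate from the defining sub-basis in Definition~\ref{chabauty}: for any compact $K \subset \SO_0(1, k+1)$ disjoint from $\iota(\Gamma_G')$, the preimage $\iota^{-1}(K)$ is compact in $\SO_0(1, k'+1)$ and disjoint from $\Gamma_G'$, and similarly for open sets meeting $\iota(\Gamma_G')$. Thus $\iota(\rho_n'(\bZ))$ converges to $\iota(\Gamma_G')$ in $\mathrm{Sub}_{\SO_0(1, k+1)}$, completing the proof.
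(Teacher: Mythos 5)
Your argument is correct, but it is not the route the paper intends. The paper's (unwritten) proof of this corollary is a direct modification of the example in Proposition \ref{hyp_Jor}: one keeps the ambient group $\SO_0(1,k+1)$ and simply alters the geometric data of $\rho_n(1)$ — e.g.\ switching off all but $i-1$ of the rotation angles and keeping only the corresponding coordinates of $\mathbf{x}_n$ unbounded — so that the criterion of Proposition \ref{p.converge} produces only $i$ independent limiting unipotents. You instead set $k'=2(i-1)$, run Proposition \ref{hyp_Jor} in $\SO_0(1,k'+1)$, and push forward along the block-diagonal, totally geodesic embedding $\iota\co\SO_0(1,k'+1)\hookrightarrow\SO_0(1,k+1)$. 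This is a genuinely different reduction, and it works: $\iota$ is a proper closed embedding, so it carries loxodromic elements to loxodromic elements (no new positive or null eigenvectors appear when one extends by the identity on the negative-definite complement), preserves discreteness and faithfulness of the algebraic limit, and induces a map $\mathrm{Sub}_{\SO_0(1,k'+1)}\to\mathrm{Sub}_{\SO_0(1,k+1)}$ that respects Chabauty convergence — your sub-basis check is the right one, with properness of $\iota$ supplying compactness of $\iota^{-1}(K)$. Your route buys a purely formal proof with no new computation; the paper's route buys explicit generators of the $\bZ^i$ inside the limit and stays within the single computational framework (geometric data plus Proposition \ref{p.converge}) that the rest of Sections \ref{sec_gen_jor} and \ref{thats_all} rely on. The one cosmetic caveat is that your examples live in a totally geodesic $\HH^{2i-1}_{\mathbf R}\subset\HH^{k+1}_{\mathbf R}$, i.e.\ they are never Zariski dense, whereas the paper's modification need not be so constrained; but nothing in the statement requires this, so your proof is complete.
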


\subsection{Complex hyperbolic space}\label{complex_jor}

Let  ${\sf G} = \SU(1, k+1)$ be the group of isometries of the complex hyperbolic space $\HH^{k+1}_{\mathbf{C}}$, which we can identify with the Riemannian symmetric space $\HH^{k+1}_{\mathbf{C}} = \faktor{\SU(1, k+1)}{\mathsf{U}(k+1)}$. 

As above, we first define the geometric data associated to a suitable loxodromic element $g\in\SU(1,k+1)$. We use the model for $\HH^{k+1}_{\mathbf{C}}$ discussed in Section \ref{rank_one}, where we consider the Hermitian form $\Psi$ of signature $(1, k+1)$ defined by the matrix 
$Q = \begin{bmatrix}
 &  & 1\\
 & -E_{k} &\\
1 & &  
\end{bmatrix}$. We define
$$\SU(1, k+1) = \{A \in \GL(k+2, \bC) \mid A^* Q A = Q, \mathrm{det}(A) = 1\}.$$ 
The compact centralizer of the  geodesic with endpoints $0$ and $\infty$ is then given by 
$$ \mathsf{U}(k) \cong \left\{B = \begin{bmatrix}
e^{i\phi} &  & \\
 & A &\\
 & &  e^{i\phi}
\end{bmatrix} \in \GL(k+2, \bC)\left|\; \begin{array}{l}A^* A =  E_{k},\\ \mathrm{det}(A) = e^{-2i\phi}\end{array}\right. \right\} < \SU(1, k+1).$$ 
Using the notation defined in Section \ref{rank_one}, the stabilizer of $\infty$ will be the model for our parabolic subgroup of $G$ and we already know that 
$$ {\sf G}_\infty = \mathrm{Stab}_G(\infty) =\left\{ 
 B = \begin{bmatrix} 
 \lambda e^{i\phi}  & \lambda e^{i\phi} \bar{a}^T A & \lambda e^{i\phi} (\frac{|a|^2}{2}+ it)\\
 0 & A & a\\
 0 & 0 & \lambda^{-1}e^{i\phi}
 \end{bmatrix} \left|
\begin{array}{l} 
 \lambda \in \bR_{+}, \\ a\in \bC^k, \\A\in \mathsf{U}(k),\\ \mathrm{det}(A) = e^{-2i\phi}
 \end{array}\right.  \right\}.$$ 

Every loxodromic element $B$ in ${\sf G}_\infty$ can be uniquely determined by:
\begin{itemize}
  \item its second fixed point $ (\mathbf{x}, t) \in \Heis_{k} =\bC^{k}\rtimes \bR \cong \partial\HH^{k+1}_{\mathbf{C}}\setminus\{\infty\}$;
  \item its translation length $y\in\bR$, for which we set $\lambda = e^y\in(0, \infty)$;
  \item its elliptic part, which corresponds to a matrix $A\in \mathsf{U}(k)$.
\end{itemize}  

 In addition, the matrix $A \in \mathsf{U}(k)$ can be diagonalised, which means that, up to choosing a suitable basis of $\bC^k$, we can assume that $A$ is of the form:
\begin{equation}\label{e.AC}
A = \begin{bmatrix} 
     e^{i\theta_1} & &\\
     & \ddots &  \\
     &  & e^{i\theta_k}\\
\end{bmatrix},
\end{equation}
where $\theta_i \in \bR/2\pi\bZ$.
To summarize we have the following definitions.  

\begin{Definition}[Well positioned $g\in\SU(1,k+1)$]
Let $g\in\SU(1,k+1)$ be a loxodromic element with Iwasawa decomposition $g=h_ge_g$.  We say that $g$ is \emph{well positioned} if the attractive fixed point of $g$ is $\infty$ and that the elliptic element is  diagonal in the standard basis. 
\end{Definition}
\begin{Definition}[Geometric Data for well positioned $g\in\SU(1,k+1)$]\label{d.gdata2}
The \emph{geometric data} associated to a well positioned loxodromic element $g\in\SU(1,k+1)$ is the $(2k+2)$--tuple
$$\left(\mathbf{x}, t, y, \theta_1, \ldots, \theta_k\right) \in (\bC^k \rtimes \bR) \times \bR_+ \times  (\bR/2\pi\bZ)^k$$ 
where 
\begin{itemize}
\item $(\mathbf{x},t)\in \bC^k \times \bR=\Heis_k$ are the coordinates of the repulsive fixed point of $g$ (cfr. Section \ref{s.bdry}), 
\item $y$ is the translation length of $h_g$, 
\item $\theta_i$ are the rotation angles of $e_g$.
\end{itemize}
In this case we set 
$$\phi = -\frac{\theta_1+ \cdots +\theta_k}{2}.$$
\end{Definition}
\begin{Remark}
Any loxodromic element $g\in\sf{SU}(1,k+1)$ is conjugated to a well positioned element $g^k$ through an element $k\in\sf K$. Furthermore there exists a unique such conjugate $g^k$ if we additionally require that $x_{1}\leq x_{2}\ldots\leq x_{k}$ and $0\leq\theta_i\leq  \theta_{i+1}<2\pi$ if $x_{i}=x_{i+1}$.
\end{Remark}
It is easy to compute the \emph{algebraic data} corresponding to a given geometric data, namely the matrix representing the element $g$:
\begin{prop}\label{p.algdata2}
If $g\in\SU(1,k+1)$ is a well positioned loxodromic element with geometric data $\left(\mathbf{x}, t, y, \theta_1, \ldots, \theta_k\right) \in (\bC^k \rtimes \bR) \times \bR_+ \times  (\bR/2\pi\bZ)^k$, then the matrix $M_{(\mathbf{x}, t, y, \theta_1, \ldots, \theta_k)}$ representing $g$ is given by 
$$M_{(\mathbf{x}, t, y, \theta_1, \ldots, \theta_k)} = \begin{bmatrix} 
        \lambda e^{i\phi} & \lambda e^{i\phi} \bar{\mathbf{v}}^T A & s \\
        0 & A & \mathbf{v}\\
        0 & 0 & \lambda^{-1}e^{i\phi}
        \end{bmatrix},
$$
where 
\begin{itemize}
\item $A=A_{\theta_1, \ldots, \theta_k}$ is the matrix in Equation (\ref{e.AC}),
\item $\lambda=e^{y}$,
\item $\mathbf{v} = -A \mathbf{x} + \lambda^{-1} \mathbf{x} e^{i\phi} \in \bC^k$,
\item $s =  \frac{1}{2}|\mathbf{x}|^2 e^{i\phi} (\lambda + \lambda^{-1}) - i t e^{i\phi} (\lambda - \lambda^{-1}) - \bar{\mathbf{x}}^T A \mathbf{x}\in\bC$.
\end{itemize}
\end{prop}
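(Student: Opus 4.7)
The proof plan mirrors the approach of Proposition~\ref{p.algdata1}, adapted to the complex Hermitian setting. The core idea is that a well positioned loxodromic element in $\SU(1,k+1)$ can be obtained by conjugating an element of the Levi factor $L$ by the unique unipotent element of $N_{\mathbf{C}}$ that moves the fixed point $0=[e_{k+1}]$ to the repulsive fixed point $(\mathbf{x},t)\in \Heis_{k}\cong \partial\HH^{k+1}_{\mathbf C}\setminus\{\infty\}$. The geometric data then determine a unique element of $L$ recording the translation length and the rotation angles on the orthogonal complement of the invariant axis.

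Concretely, I would first use the parametrization $\psi_{\mathbf{C}}$ from Section~\ref{s.bdry} to write down the unipotent element
$$u_{(\mathbf{x},t)}= \begin{bmatrix} 1  &  \bar{\mathbf{x}}^T  & \tfrac{1}{2}|\mathbf{x}|^2+it\\ 0 & E_{k} & \mathbf{x}\\ 0 & 0 & 1\end{bmatrix},$$
whose inverse is obtained by replacing $(\mathbf x,t)$ with $(-\mathbf x,-t)$ (a direct check, or an application of the Heisenberg group law). Next, I would identify the element of $L$ associated to the geometric data $(y,\theta_1,\dots,\theta_k)$: since the diagonal entries of an element of $L$ must have product one, the scalars $\lambda e^{i\phi}$ and $\lambda^{-1}e^{i\phi}$ together with $\det(A)=e^{i(\theta_1+\dots+\theta_k)}$ force the choice $\phi=-\tfrac{1}{2}(\theta_1+\dots+\theta_k)$, so that
$$\ell=\begin{bmatrix} \lambda e^{i\phi}  & 0 & 0\\ 0 & A & 0\\ 0 & 0 & \lambda^{-1}e^{i\phi}\end{bmatrix}\in L.$$

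It then remains to multiply out $M_{(\mathbf x,t,y,\theta_1,\dots,\theta_k)}=u_{(\mathbf x,t)}\,\ell\, u_{(\mathbf x,t)}^{-1}$ and compare with the claimed expression. The diagonal and subdiagonal blocks are immediate, as is the middle-right block: it gives $-A\mathbf{x}+\lambda^{-1}e^{i\phi}\mathbf{x}=\mathbf{v}$. The top-middle entry requires showing that $\bar{\mathbf{x}}^T A-\lambda e^{i\phi}\bar{\mathbf{x}}^T=\lambda e^{i\phi}\bar{\mathbf{v}}^T A$; this is where the Hermitian structure enters, and it reduces to the identity $A^*A=E_k$ after taking the complex conjugate of the defining relation for $\mathbf{v}$. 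The top-right entry requires expanding
$$s=\lambda e^{i\phi}\bigl(\tfrac{1}{2}|\mathbf{x}|^2-it\bigr)+\lambda^{-1}e^{i\phi}\bigl(\tfrac{1}{2}|\mathbf{x}|^2+it\bigr)-\bar{\mathbf{x}}^T A\mathbf{x},$$
grouping the real part $\tfrac{1}{2}|\mathbf{x}|^2(\lambda+\lambda^{-1})e^{i\phi}$ and the imaginary part $-it(\lambda-\lambda^{-1})e^{i\phi}$ separately.

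The only genuinely delicate step is the bookkeeping of the imaginary coordinate $t$ coming from the Heisenberg structure of $N_{\mathbf{C}}$, which is absent in the real case of Proposition~\ref{p.algdata1}; everything else is a mechanical block-matrix multiplication. Once the four cases of entries are verified, the matrix $M_{(\mathbf x,t,y,\theta_1,\dots,\theta_k)}$ belongs to $\sf G_\infty$ by construction (since $u_{(\mathbf x,t)}\in N\subset \sf G_\infty$ and $\ell\in L\subset \sf G_\infty$), and its membership in $\SU(1,k+1)$ is automatic from the choice of $\phi$, so no further verification of the defining relations is needed.
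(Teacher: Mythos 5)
Your proposal is correct and follows essentially the same route as the paper: conjugating the block-diagonal element of the Levi factor $L$ (with $\phi=-\tfrac12\sum\theta_j$ forced by the determinant condition) by the unipotent element $u_{(\mathbf x,t)}$ of $N_{\mathbf C}$ carrying $0$ to $(\mathbf x,t)$, and then reading off $\mathbf v$ and $s$ from the block multiplication. Your verifications of the top-middle entry via $A^*A=E_k$ and of the top-right entry by separating the $\tfrac12|\mathbf x|^2$ and $it$ contributions match the paper's computation exactly.
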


\begin{proof}
The element $g$ is conjugated to an element in the stabilizer $L$ of the geodesic between $0$ and $\infty$ through the unique unipotent element $u_{({\bf x},t)}$ fixing $\infty$ and such that $u_{({\bf x},t)}\cdot 0= ({\bf x},t)$. In particular, we have:
$$u_{({\bf x},t)} = \begin{bmatrix} 
    1  & \bar{\mathbf{x}}^T  & \frac{1}{2}|\mathbf{x}|^2+it\\
    0 & 1 & \mathbf{x}\\
    0 & 0 & 1
    \end{bmatrix}.$$
It is easy to compute that $L$ has the following form:
$$ L={\sf G}_\infty \cap \mathrm{Stab}_G(\mathbf{0}) = \left\{ 
  \begin{bmatrix} 
 \lambda e^{i\phi} & 0 & 0\\
 0 & A & 0\\
 0 & 0 & \lambda^{-1} e^{i\phi}
 \end{bmatrix} \left|\begin{array}{l}  \lambda \in \bR_{+}, \\A\in \mathsf{U}(k), \\\mathrm{det}(A) = e^{-2i\phi}\end{array}\right.\right\}.$$
 We can now compute the matrix $M = M_{(\mathbf{x},t, y, \theta_1, \ldots, \theta_k)}$ associated with the geometric data 
 $(\mathbf{x}, t, y,  \theta_1, \ldots, \theta_k):$
 \begin{align*}
  M = M_{(\mathbf{x}, t,y, \theta_1, \ldots, \theta_k)} &=  \begin{bmatrix} 
    1  & \bar{\mathbf{x}}^T  & \frac{1}{2}|\mathbf{x}|^2+it\\
    0 & 1 & \mathbf{x}\\
    0 & 0 & 1
    \end{bmatrix} \begin{bmatrix} 
     \lambda e^{i\phi}  & 0  & 0\\
     0 & A & 0\\
     0 & 0 & \lambda^{-1} e^{i\phi}
     \end{bmatrix} \begin{bmatrix} 
       1  & -\bar{\mathbf{x}}^T  & \frac{1}{2}|\mathbf{x}|^2-it\\
       0 & 1 & -\mathbf{x}\\
       0 & 0 & 1
       \end{bmatrix} \\
       &= \begin{bmatrix} 
        \lambda e^{i\phi} & \lambda e^{i\phi} \bar{\mathbf{v}}^T A & s \\
        0 & A & \mathbf{v}\\
        0 & 0 & \lambda^{-1}e^{i\phi}
        \end{bmatrix},
 \end{align*}
 where 
 \begin{align*}
  \mathbf{v} &= -A \mathbf{x} + \lambda^{-1} \mathbf{x} e^{i\phi} \in \bC^k,\\
  s &=  \frac{1}{2}|\mathbf{x}|^2 e^{i\phi} (\lambda + \lambda^{-1}) - i t e^{i\phi} (\lambda - \lambda^{-1}) - \bar{\mathbf{x}}^T A \mathbf{x}\\
              &= \frac{1}{2}|\mathbf{x}|^2 e^{i\phi} (\lambda - \lambda^{-1}) - i t e^{i\phi} (\lambda - \lambda^{-1}) + \bar{\mathbf{x}}^T \mathbf{v}\in \bC.
 \end{align*}\end{proof}
With this at hand, it is easy to give necessary and sufficient conditions on the geometric data of a sequence of well positioned loxodromic elements $g_n$ guaranteeing that the sequence converges in $\SU(1,k+1)$.
\begin{prop}\label{p.convergeC}
Let $(g_m)_{m\in\bN}\subset \SU(1,k+1)$ be a sequence of  well positioned loxodromic elements with geometric data $$\left(\mathbf{x}_m, t_m, y_m, \theta_{1,m}, \ldots, \theta_{k,m}\right)$$ and let $\mathbf{x}_m = (x_{1, m}, \ldots, x_{k, m})$.
Assume, furthermore, that $x_{k,m}\geq\ldots\geq x_{1,m}$ and $x_{1,m}$ diverges.
Then the sequence converges algebraically if and only if
\begin{itemize}
\item $(\phi_m-\theta_{j,m})x_{j,m}$ converges for all $j = 1, \ldots, k$;
\item $y_m x_{k,m}$ converges;
\item $\sum_{j=1}^k x_{j,m}^2\left(\phi_m -\theta_{j,m}\right)$ converges;
\item $t_m  y_m$ converges.
\end{itemize}
In this case the algebraic limit is the unipotent matrix associated to the pair $({\bf v}, s)$ where ${\bf v} = (v_1, \ldots, v_k)\in \bC^k$ and $s\in \bC$ are defined by
$$\left\{\begin{array}{l}
v_{j}=\displaystyle{\lim_{m \to \infty} [-y_mx_{j,m}+i(\phi_{m}-\theta_{j,m})x_{j,m}]\quad \forall j = 1, \ldots, k}\\
s=\displaystyle{\lim_{m \to \infty}  \frac{1}{2}\sum_{j=1}^k \left[x_{j,m}^2 \left(y_m^2 - \phi_m^2 +\theta_{j,m}^2\right)\right] 
+ i\left( \sum_{j=1}^k \left[x_{j,m}^2\left(\phi_m -\theta_{j,m}\right)\right]+ 2t_m  y_m\right).}
\end{array}\right.$$
\end{prop}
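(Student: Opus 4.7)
The plan is to parallel the proof of Proposition \ref{p.converge}, with the additional complexity coming from the imaginary parts of the entries and the more elaborate formula for $s_m$. First, apply Proposition \ref{p.algdata2} to express each $g_m$ as a matrix $M_m$ whose entries are explicit functions of the geometric data. Algebraic convergence of $g_m$ is equivalent to convergence of each matrix entry. Since $|x_{j,m}| \to \infty$ for all $j$, and the entry $v_{j,m} = x_{j,m}(\lambda_m^{-1} e^{i\phi_m} - e^{i\theta_{j,m}})$ must remain bounded, one immediately obtains $\lambda_m^{-1}e^{i\phi_m} - e^{i\theta_{j,m}} \to 0$. Together with the convergence of the diagonal entries $\lambda_m e^{i\phi_m}$, $e^{i\theta_{j,m}}$ and $\lambda_m^{-1} e^{i\phi_m}$, this forces $y_m \to 0$, $\phi_m \to 0$ and $\theta_{j,m} \to 0$ modulo $2\pi$ for every $j$, so that the limit matrix has the unipotent form described in the statement.

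Next, I would Taylor expand each component of $\mathbf{v}_m$:
$$v_{j,m} = x_{j,m}\bigl[-y_m + i(\phi_m - \theta_{j,m})\bigr] + x_{j,m}\cdot O\bigl(y_m^2 + \phi_m^2 + \theta_{j,m}^2\bigr).$$
The error term is of the form $y_m\cdot(y_m x_{j,m})$ and similar expressions, that is, the product of a quantity tending to $0$ and one that is bounded once the first-order terms converge, hence it vanishes in the limit. This gives that $v_{j,m}$ converges if and only if both $y_m x_{j,m}$ and $(\phi_m - \theta_{j,m}) x_{j,m}$ converge, and identifies their limit as the stated formula for $v_j$. In particular, the necessity of the conditions involving $y_m x_{k,m}$ and $(\phi_m-\theta_{j,m})x_{j,m}$ in the statement follows, as does the fact that each $y_m x_{j,m}$ converges when the matrix sequence converges.

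For the top-right entry $s_m$, I would carry out a second-order Taylor expansion. Using
$\lambda_m + \lambda_m^{-1} = 2 + y_m^2 + O(y_m^4)$,
$\lambda_m - \lambda_m^{-1} = 2y_m + O(y_m^3)$,
and $e^{i\phi_m}$, $e^{i\theta_{j,m}}$ to second order, the formula of Proposition \ref{p.algdata2} rearranges as
$$s_m = \frac{1}{2}\sum_{j=1}^{k} x_{j,m}^2\bigl(y_m^2 - \phi_m^2 + \theta_{j,m}^2\bigr) + i\!\left[\sum_{j=1}^{k} x_{j,m}^2(\phi_m - \theta_{j,m}) - 2 t_m y_m\right] + o(1),$$
where the hidden errors involve expressions such as $y_m\cdot (y_m x_{j,m})^2$ or $\phi_m\cdot((\phi_m-\theta_{j,m})x_{j,m})^2$, both vanishing in the limit by the previously established bounds. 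Convergence of $s_m$ therefore reduces to the convergence of its real and imaginary parts, which, combined with the conditions already produced in the analysis of the $v_{j,m}$, is equivalent to the two remaining conditions of the statement and yields the limit formula for $s$.

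The main obstacle will be the careful bookkeeping of the second-order expansion of $s_m$: because it involves the squared quantities $x_{j,m}^2$, the a priori large cross-terms $x_{j,m}^2 y_m^2$, $x_{j,m}^2\phi_m^2$, etc.\ must be written as products of bounded quantities from the first-order analysis (like $(y_m x_{j,m})^2$) times quantities tending to $0$ (like $y_m$). A secondary delicate point is to separate the two items ``$\sum_j x_{j,m}^2(\phi_m-\theta_{j,m})$ converges'' and ``$t_m y_m$ converges'' as independent necessary conditions, rather than only their difference; this can be done by combining the convergence of the imaginary part of $s_m$ with the fact that each $x_{j,m}^2(\phi_m - \theta_{j,m})$ is already controlled through the analysis of $v_{j,m}$ and the individual convergence of $(\phi_m-\theta_{j,m})x_{j,m}$.
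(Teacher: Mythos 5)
Your proposal follows essentially the same route as the paper's proof: use Proposition \ref{p.algdata2} to write the matrix entries, force $y_m\to 0$ and $\phi_m-\theta_{j,m}\to 0$ from boundedness of $v_{j,m}=x_{j,m}(\lambda_m^{-1}e^{i\phi_m}-e^{i\theta_{j,m}})$ with $x_{j,m}\to\infty$, then a first-order Taylor expansion for $\mathbf{v}_m$ and a second-order one for $s_m$. Two remarks. First, your displayed expansion of $s_m$ carries $-2t_my_m$ in the imaginary part where the statement has $+2t_my_m$; your sign is the one consistent with the term $-ite^{i\phi}(\lambda-\lambda^{-1})$ in Proposition \ref{p.algdata2} (and with the penultimate line of the paper's own computation, which flips sign in its last line), so this discrepancy is a typo on the paper's side rather than an error of yours. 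Second, the device you propose for separating the two necessary conditions ``$\sum_j x_{j,m}^2(\phi_m-\theta_{j,m})$ converges'' and ``$t_my_m$ converges'' does not work as written: knowing that $(\phi_m-\theta_{j,m})x_{j,m}$ converges gives no control on $x_{j,m}^2(\phi_m-\theta_{j,m})=x_{j,m}\cdot\bigl[(\phi_m-\theta_{j,m})x_{j,m}\bigr]$, since $x_{j,m}$ diverges. You have correctly spotted a delicate point here --- the expansion of $s_m$ only shows that the particular combination appearing in $\mathrm{Im}(s_m)$ must converge, and the paper's proof is silent on how the two items are obtained separately --- but your suggested fix does not close it.
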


\begin{proof}
In order to understand under which conditions the elements $g_m$ converge as $m$ diverges, we need to compute approximations of the pairs $({\mathbf{v}}_m,s_m)$ determining the matrix $M_m$ associated to the geometric data of $g_m$. 
It follows from Proposition \ref{p.algdata2} that:
\begin{equation}\label{e.conv}
\mathbf{v}_m = -A_m \mathbf{x}_m + \lambda_m^{-1} \mathbf{x}_m e^{i\phi_m} \in \bC^k.
\end{equation} 
Since $\mathbf{v}_m$ is the difference of two vectors of norm $\|A_m\mathbf x_m\| = \|\mathbf x_m\|$ and $| \lambda_m^{-1}| \|\mathbf x_m\| = e^{-y_m} \|\mathbf x_m\|$, and $\|\mathbf x_m\|$ diverges, we deduce that $y_m$ needs to converge to $0$. Furthermore, since Equation \eqref{e.conv}, expressed in coordinates, yields that 
$$v_{j, m} = (e^{-y_m}e^{i\phi_m}-e^{i\theta_{j,m}})x_{j,m} = e^{i\phi_m}x_{j,m}(e^{-y_m}-e^{i(\theta_{j,m}-\phi_m)})\in\bC.$$ 
From that we deduce that also $\phi_m -\theta_{j, m}$ needs to go to zero.
Since $(\phi_m -\theta_{j, m})$ converges to zero as $m\to \infty$ and since $\phi_m = -\frac{\theta_{1, m}+ \cdots +\theta_{k, m}}{2}$, then also $\sum_{j =1}^k\left(\phi_m -\theta_{j, m}\right) = (k+2)\phi_m$ converges to zero as $m\to \infty$, so both $\phi_m$ and $\theta_{j, m}$ converge to zero as $m$ diverges. 

As a result we can use a Taylor expansion to understand the rate of convergence. From Equation \eqref{e.conv} and using the Taylor expansion for $e^{i\phi_m}$, $e^{i\theta_{j, m}}$ and $e^{y_m} \pm e^{-y_m}$ we have that:
   \begin{align*}
    v_{j, m} &= x_{j,m} (\lambda_m^{-1}e^{i\phi_m}-e^{i\theta_{j,m}}) = x_{j,m} (e^{-y_m+i\phi_m}-e^{i\theta_{j,m}})\\
&= x_{j,m} \left(-y_m  +i(\phi_m-\theta_{j,m}) \right) +o\left(1\right),
 \end{align*}  
for $j = 1, \ldots, k$. This shows that:
\begin{itemize}
	\item $y_m$ goes to zero at least as fast as $x_{k,m}^{-1}$, and
	\item $\phi_m -\theta_{j, m}$ needs to go to zero at least as fast as $x_{j,m}$, for every $j = 1, \ldots, l$.
\end{itemize} 
This justifies the first two conditions. In addition, the number $s_m \in\bC$ can be approximated as:
\begin{align*}
s_m &=  \frac{1}{2}|\mathbf{x}_m|^2 e^{i\phi_m} (\lambda_m + \lambda^{-1}_m) - \bar{\bf x}_m^T A {\bf x}_m - i t_m e^{i\phi_m} (\lambda_m - \lambda^{-1}_m)\\
&=  \frac{1}{2}|\mathbf{x}_m|^2 e^{i\phi_m} (2+y_m^2)- \bar{\bf x}_m^T A {\bf x}_m - i t_m e^{i\phi_m} (2y_m) +o(1)\\
&=  \sum_{j=1}^k x_{j,m}^2\left(e^{i\phi_m} -e^{i\theta_{j,m}}\right)+\frac{1}{2}|\mathbf{x}_m|^2 y_m^2- i 2t_m  y_m  +o(1)\\
&=  \left( \frac{1}{2}\sum_{j=1}^k \left[x_{j,m}^2 \left(y_m^2 - \phi_m^2 +\theta_{j,m}^2\right)\right]\right)+ i\left( \sum_{j=1}^k \left[x_{j,m}^2\left(\phi_m -\theta_{j,m}\right)\right]+ 2t_m  y_m\right) +o(1).\\
\end{align*}
This computation allows us to conclude the two last claims, and hence complete the proof.
\end{proof}

The highest rank of a discrete abelian subgroup of $\sf G=\SU(1,k+1)$ is $k+1$. This is what we get in Theorem \ref{t.INTRO1}, using the following proposition and an argument from the very end of the paper.
\begin{prop}\label{hyp_Jor2}
  The  sequence $\left(\rho_n\co\bZ\to \SU(1, k+1)\right)_{n\in\bN} $ of convex-cocompact representations defined by $\rho_n(1) = g_{0, n}\in {\sf G}_\infty$ loxodromic element with geometric data given by 
$$
(\mathbf{x}, t, y,  \theta_1, \ldots, \theta_k)=\left((n,\ldots, n), 0, \frac{1}{n^{k+2}},\theta_1, \ldots, \theta_k\right)$$
where 
$$\left\{\begin{array}{lr
}
\theta_1= \displaystyle{\frac{2\pi}{n}+\frac{2\pi}{n^{k+1}}},\\
\theta_j = \displaystyle{\frac{2\pi}{n^j}}, & 2\leq l\leq k-1\\ 
\theta_k=-\displaystyle{ \sum_{j =1}^{k-1}\frac{2\pi}{n^j}}.
\end{array}\right.
$$
has (discrete and faithful) algebraic limit and its geometric limit $\Gamma_G$ contains a group isomorphic to $\bZ^{k+1}$.
\end{prop}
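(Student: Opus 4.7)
The plan mirrors the proof of Proposition \ref{hyp_Jor}. I consider the $k+1$ sequences $g_{r,n} := \rho_n(n^r)$ for $r \in \{0, 1, \ldots, k-1, k+1\}$ (the exponent $r=k$ is deliberately skipped, see below), apply Proposition \ref{p.convergeC} to show that each $(g_{r,n})_n$ converges algebraically to some $h_r \in N_{\bC} \cong \Heis_k$, and verify that the $h_r$ generate a subgroup of $\Heis_k$ isomorphic to $\bZ^{k+1}$. Each $h_r$ lies in the geometric limit $\Gamma_G$ as a limit of elements of $\rho_n(\bZ)$, and they pairwise commute because, for each $n$, the elements $g_{r,n}$ all lie in the abelian cyclic group $\langle g_{0,n}\rangle$.

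The exponent $r = k$ is excluded because a direct computation gives $\phi_{k,n} = -\pi/n$ (after reduction modulo $2\pi$), whence $\sum_j x_{j,n}^2(\phi - \theta_j) = (k+2)\phi \cdot n^2 = -(k+2)\pi n$ diverges, violating the third condition of Proposition \ref{p.convergeC}. For the chosen exponents, the bookkeeping goes as follows. For $1 \leq r \leq k-1$, reduction modulo $2\pi$ yields $\theta_{1,r,n} = 2\pi/n^{k+1-r}$; $\theta_{j,r,n} = 2\pi/n^{j-r}$ for $r < j \leq k-1$ and $\theta_{j,r,n}=0$ for $2 \leq j \leq r$; and $\theta_{k,r,n} = -\sum_{i=1}^{k-1-r} 2\pi/n^i$. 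These telescope so that $\phi_{r,n} = -\pi/n^{k+1-r}$, and the four conditions of Proposition \ref{p.convergeC} then follow immediately from $y_{r,n} = 1/n^{k+2-r}$. For $r = k+1$, all rotation angles reduce to $0 \bmod 2\pi$ while the translation length equals $1/n$, so $y_m x_{k,m} \to 1$ (a nonzero but finite limit is allowed).

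Applying the explicit formulas in Proposition \ref{p.convergeC} gives: for $0 \leq r \leq k-2$, the limit $h_r$ is the unipotent with $v_r = -2\pi i\, e_{r+1} + 2\pi i\, e_k$ and $s_r = 4\pi^2$; the limit $h_{k-1}$ has $v_{k-1} = 0$ and $s_{k-1} = -(k+2)\pi i$, a purely central element of $\Heis_k$; and $h_{k+1}$ has $v_{k+1} = (-1, \ldots, -1)$ and $s_{k+1} = k/2$. Since $v_0, \ldots, v_{k-2}$ are purely imaginary, $v_{k+1}$ is purely real, and $v_{k-1} = 0$, the symplectic form $\Im(\bar v_r^T v_{r'})$ vanishes on every pair, so by the Heisenberg commutator formula $[(a,b),(a',b')] = (0, 2\Im(\bar a^T a'))$ all $k+1$ limits commute. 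For $\bZ$-linear independence, the $k-1$ imaginary vectors $v_0, \ldots, v_{k-2}$ form a staircase in $i\bR^k$ (each has $-2\pi i$ in a distinct position), $v_{k+1}$ lies in the complementary real subspace $\bR^k \subset \bC^k$ and is nonzero, and $h_{k-1}$ has vanishing $\bC^k$-projection but nonzero central coordinate. Any integer relation $\sum c_r h_r = 0$ thus forces $c_r = 0$ for $r \neq k-1$ (projecting to $\bC^k$) and then $c_{k-1} = 0$ (reading the center), giving $\langle h_0, \ldots, h_{k-1}, h_{k+1}\rangle \cong \bZ^{k+1} \subset \Gamma_G$. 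Discreteness and faithfulness of $\rho_\infty$ (with $\rho_\infty(1) = h_0$ a nontrivial unipotent) are immediate. The main technical obstacle is the modular bookkeeping and the observation that the exponent $n^k$ must be replaced by $n^{k+1}$ to obtain a fourth convergent regime.
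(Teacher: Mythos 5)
Your proposal is correct and follows essentially the same route as the paper: you take the same powers $g_{r,n}=\rho_n(n^r)$ for $r=0,\ldots,k-1,k+1$, apply Proposition \ref{p.convergeC} after reducing the angles mod $2\pi$, and your limits $(v_r,s_r)$ agree with the paper's Table \ref{t.4}; you in fact add two things the paper leaves implicit, namely the explanation of why $r=k$ fails the third convergence condition and the explicit $\bZ$-linear independence argument showing the limits generate $\bZ^{k+1}$. One cosmetic slip: the fact that $v_0,\ldots,v_{k-2}$ are purely imaginary and $v_{k+1}$ purely real does not by itself force $\Im(\bar v_r^T v_{k+1})=0$ (for $a$ imaginary and $b$ real the pairing $\bar a^Tb$ is purely imaginary); it vanishes here because the entries of each $v_r$ sum to zero against the constant vector $v_{k+1}$ — or, more simply, because the limits of commuting elements commute, which you had already invoked and which suffices to make all products additive in Heisenberg coordinates.
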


\begin{proof}
As in Proposition \ref{hyp_Jor}, we first note that it is easy to determine the geometric data of $\rho_n(n^r) = g_{r,n} $ for all $r\geq 0$ from the geometric data of $\rho_n(1)= g_{0,n}$. In fact, the geometric data of $g_{r,n}$ is the following:
\begin{itemize}
\item the second fixed point of $g_{r,n}$ is still given by $(\mathbf{x}_n, t_n)\in \Heis_{k}=\bC^{k}\rtimes \bR$ such that $\mathbf{x}_n = (x_{1,n}, \ldots, x_{k,n})^T = (n, \ldots, n)^T,$ and $t_n=0$;
\item the translation length of $g_{r,n}$ is given by $y_{r,n} = n^{-(k+2-r)}$;
\item the rotation angles $\theta_{j, r,n}$ of $g_{r,n}$ are defined by $\theta_{j, r,n} = n^r\theta_{j}\in\bR/2\pi\bZ$. 
\end{itemize}
Also, for the convenience of the reader, we tabulated the geometric data for $g_{r, n}$ in Table \ref{t.3}.
\begin{table}[ht]
   \renewcommand*{\arraystretch}{2.3}
    \centering
\begin{tabular}{|l||l|l|l|l|l|l|l|}
\hline
&$\mathbf{x} $&$t$&$\theta_{1}$&$\theta_{j}\;(j = 2, \ldots, k-1)$&$\theta_{k}$&$\phi$&$y $\\
\hline\hline
$g_{0,n}$&$(n, {\scriptstyle\ldots}, n)$&$0$&\(\displaystyle\frac{2\pi}{n}+\frac{2\pi}{n^{k+2}}\)&\(\displaystyle\frac{2\pi}{n^j}\)&\(\displaystyle-\sum_{q=1}^{k-1} \frac{2\pi}{n^q}\)&$\displaystyle\frac{-\pi}{n^{k+2}}$&\(\displaystyle\frac{1}{n^{k+3}}\)\\
\hline
$g_{a, n} \;{}_{(a = 1, \ldots, k-1)}$ &$(n, {\scriptstyle\ldots}, n)$&$0$&\(\displaystyle\frac{2\pi}{n^{k+2-a}}\)&\(\displaystyle\left\{
{  \renewcommand*{\arraystretch}{1}
\begin{array}{ll}0&{\scriptstyle j\leq a}\\\displaystyle\frac{2\pi}{n^{j-a}}&{\scriptstyle j>a}\end{array}}\right.\)&\(\displaystyle-\sum_{q=a+1}^{k-1} \frac{2\pi}{n^{q-a}}\)&$\displaystyle\frac{-\pi}{n^{k+2-a}}$&\(\displaystyle\frac{1}{n^{k+3-a}}\)\\
\hline
$g_{k-1,n}$&$(n, {\scriptstyle\ldots}, n)$&$0$&\(\displaystyle\frac{2\pi}{n^{2}}\)&$0$&$0$&\(\displaystyle\frac{-\pi}{n^{2}}\)&\(\displaystyle\frac{1}{n^{3}}\)\\
\hline
$g_{k+1,n}$&$(n, {\scriptstyle\ldots}, n)$&$0$&$0$&$0$&$0$&$0$&\(\displaystyle\frac{1}{n}\)\\
\hline
\end{tabular}
\medskip
\caption{The geometric data for $\rho_n(n^r) = g_{r,n}$ for
 $r = 0, \ldots, k-1, k+1.$}\label{t.3}
\end{table}

Using Proposition \ref{p.convergeC} we deduce that the geometric limit of the sequence $(\rho_n(\bZ))_{n\in\bN}$ contains the group generated by the matrices $g_0, \ldots, g_{k-1}, g_{k+1}$, where
$$g_r:= \begin{bmatrix} 
   1  & \overline{\mathbf{v}_r}^T  & s_r\\
   0 & I & \mathbf{v}_r\\
   0 & 0 & 1
   \end{bmatrix} \;\; \text{ for } r = 0, \ldots, k-1, k+1,$$
and the vectors $\mathbf{v}_r$ are the limit, as $n$ goes to infinity, of the vector $\mathbf{v}_{r,n}$ appearing in the algebraic data of the element $g_{r,n}$ and $s_r\in \bC$ is the limit, as $n$ goes to infinity, of the number $s_{r, n}$ again appearing in the algebraic data of $g_{r,n}$. For the convenience of the reader we tabulated such values in Table \ref{t.4}. 
\begin{center}
\begin{table}[ht]
   \renewcommand*{\arraystretch}{1.7}
    \centering
\begin{tabular}{|l||l|l|l|l|}
\hline
&$v_{1}$&$v_{j} \;(j = 2, \ldots, k-1)$&$v_{k}$&$s$\\
\hline\hline
$g_0$&$-2\pi i$&$0$&$2\pi i$&$4\pi^2$\\
\hline
$g_{a, n} \;{}_{(a = 1, \ldots, k-1)}$ 
&$0$&\(\displaystyle\left\{
{  \renewcommand*{\arraystretch}{1}
\begin{array}{ll}0&j\neq a+1\\\displaystyle{-2\pi i}&j=a+1\end{array}}\right.\)
&$2\pi i$&$4\pi^2$\\
\hline
$g_{k-1}$&$0$&$0$&$0$&$-(k+2)i\pi$\\
\hline
$g_{k+1}$&$-1$&$-1$&$-1$&$\frac{k}{2}$\\
\hline
\end{tabular}
\medskip
\caption{The vectors $\mathbf{v}_r = (v_{1, r}, \ldots, v_{k, r})\in \bC^k$ and $s_r\in \bC$ defining the matrices $g_r \in \SU(1,k+1)$ for $r = 0, \ldots, k-1, k+1$.}\label{t.4}
\end{table}
\end{center}
\end{proof}

Proposition  \ref{hyp_Jor2}  can be easily modified to obtain the following:
\begin{Corollary}
For any $i = 1, \ldots, k+1$, there exists a sequence of Anosov representations $\{\rho_n\co\bZ\to \mathrm{Isom}^+(\HH^{k+1})\}_{n\in\N}$  whose algebraic limit is a discrete and faithful representation $\rho_{\infty}$ and such that the geometric limit $\Gamma_G$ of the subgroups $\rho_n(\bZ)$ contains a group isomorphic to $\bZ^{i}$.
\end{Corollary}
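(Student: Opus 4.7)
The plan is direct. The case $i = k+1$ is precisely Proposition \ref{hyp_Jor2}, and the remaining cases are immediate consequences. Indeed, for the sequence $(\rho_n)_{n\in\bN}$ constructed there, the geometric limit $\Gamma_G$ contains the free abelian group $\Lambda := \langle g_0, g_1, \ldots, g_{k-1}, g_{k+1}\rangle \cong \bZ^{k+1}$. Since $\bZ^{k+1}$ admits subgroups isomorphic to $\bZ^i$ for every $1 \leq i \leq k+1$ (for instance, the subgroup generated by any $i$ of the unipotent elements $g_j$), and since these sublattices are contained in $\Gamma_G$ as well, the same sequence witnesses the corollary for every value of $i$. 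The algebraic limit is identical to the one in Proposition \ref{hyp_Jor2}, hence discrete and faithful.

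An alternative, more economical plan is to modify the geometric data of Proposition \ref{hyp_Jor2} so that only $i$ distinct asymptotic scales appear. For fixed $i$, I would keep $\theta_j$ of order $n^{-j}$ only for $j = 1, \ldots, i-1$, set $\theta_j = 0$ for $j \geq i$, take translation length $y = 1/n^{i+1}$, and let $\mathbf{x}$ have only its first $i-1$ coordinates diverging at rate $n$ (with the appropriate corrections, analogous to those in Proposition \ref{hyp_Jor2}, ensuring that $\phi = -\tfrac{1}{2}\sum_j \theta_j$ decays faster than $n^{-(i-1)}$). The Taylor-expansion analysis of Proposition \ref{p.convergeC} then applies verbatim with $k+1$ replaced by $i$, producing $i$ independent unipotent limits among the powers $\rho_n(n^r)$ for $r = 0, \ldots, i-1$, which generate the required $\bZ^i$ inside the geometric limit.

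I do not expect any substantial obstacle: the corollary is essentially a bookkeeping consequence of Proposition \ref{hyp_Jor2}. The only required verification in either approach is that the algebraic limit is a faithful cyclic group, and this is immediate from the non-triviality of the unipotent limit of $\rho_n(1)$, which generates a discrete copy of $\bZ$ in $\SU(1, k+1)$.
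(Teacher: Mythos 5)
Your proposal is correct, and your primary argument is in fact the most economical one for the statement as written: since the corollary only asks that $\Gamma_G$ \emph{contain} a group isomorphic to $\bZ^i$, and the sequence of Proposition \ref{hyp_Jor2} has geometric limit containing a free abelian group of rank $k+1$, any $i$ of its generators span a rank-$i$ direct factor inside it, so the single sequence witnesses every $i\leq k+1$ at once. The paper itself gives no proof, merely remarking that Proposition \ref{hyp_Jor2} ``can be easily modified,'' which is your second, alternative route: truncating the geometric data so that only $i$ asymptotic scales survive and re-running the convergence criterion of Proposition \ref{p.convergeC}. That route requires the small but real bookkeeping you flag (choosing the compensating angle so that $\phi=-\tfrac12\sum_j\theta_j$ decays fast enough relative to the surviving coordinates of $\mathbf{x}$), and what it buys is a sharper family of examples in which the limit has rank exactly $i$ rather than merely containing $\bZ^i$; but for the corollary as stated the containment argument already suffices. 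Your closing remark is also right that discreteness and faithfulness of the algebraic limit need only the non-triviality of the limiting unipotent element, independently of the discreteness of the full geometric limit established later via Corollary \ref{c.hyp_Jor}.
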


\subsection{Examples of sequences with non-discrete geometric limit}\label{s.counter}
We conclude the section providing two examples of sequences whose geometric limit is not discrete. In both our examples we have ${\sf G}=\SO_0(1,k+1)$, but it's easy to construct similar examples for any other Lie group $\sf G$.
\begin{Example}(Non-faithful algebraic limit)
Let $\theta\in \bR/2\pi \bZ$ be an irrational multiple of $\pi$, and let $n_r\in\bZ$ be such that $n_r\theta\to 0$ (of course $n_r$ is unbounded). We consider the well positioned loxodromic elements $\rho_n(1)$ with geometric data 
$$\left(\mathbf{x_r}, y_r, \theta_{1,r}, \ldots, \theta_{l,r}\right)=\left(0, 1/{n_r^2}, n_r\theta,0 \ldots, 0\right).$$
The limit of the sequence of convex-cocompact representations  $\left(\rho_n\co\bZ\to \SO_0(1, k+1)\right)_{n\in\bN}$ is the trivial representation, which is in particular not faithful. The geometric limit is the whole one parameter group of rotations containing the rotation associated to $A_{\theta,0 \ldots, 0}$ which is, in particular, not discrete.
\end{Example}
With the same idea one can construct examples of sequences of representations whose geometric limit is a torus of dimension $l$ if ${\sf G}=\SO_0(1,k+1)$ and of dimension $k$ if ${\sf G}=\SU(1,k+1)$ .

The following example shows that requiring that the algebraic limit is faithful with discrete image doesn't guarantee that the geometric limit is discrete. Note that this example is not uniformly discrete and so it does not contradicts Corollary \ref{disc_Z}.

\begin{Example}(Discrete and faithful algebraic limit, non-discrete geometric limit)
In this case we consider a minor modification of the discussion in the proof of Proposition \ref{p.algdata1} and in example in Proposition \ref{hyp_Jor}. Namely  we consider  the  sequence of convex-cocompact representations $\left(\rho_n\co\bZ\to \SO_0(1, k+1)\right)_{n\in\bN} $ such that the geometric data of $\rho_n(1)$ is given by 
$$\left(\mathbf{x}, \frac{1}{n^{2l+1}}, \frac{2\pi}{n^2}, \ldots, \frac{2\pi}{n^{2l}}\right)$$
where 
 $$ \mathbf{x} = (x_1, \ldots, x_k)^T = \begin{cases}
(0,n, \ldots, 0,n)^T& k = 2l\\
(0,n, \ldots, 0,n, 0)^T& k = 2l+1
\end{cases} \in \bR^k.$$
Such sequence of representations has  discrete and faithful algebraic limit, but the geometric limit $\Gamma_G$ of the subgroups $\rho_n(\bZ)$ is isomorphic to $\bZ\times \bR^{l}$, where $l = \floor*{\frac{k}{2}}$. Indeed the geometric limit is, by definition, closed, and in this example contains the group elements associated to every rational multiple of the vector $-2\pi e_{2s-1}$  for $1\leq s\leq l$. Indeed if $p$ and $q$ are coprime integers, and we set $m_n$ to be the closest integer to $pn^2/q$, then  the unipotent element associated to $-2\pi p/q e_{2s-1}$ is the limit of $\rho_n(m_nn^{2(s-1)})$.

\end{Example}

\section{Classification of geometric limits for cyclic representations in $\SO_0(1,k+1)$}\label{thats_all}
The goal of the section is to classify all possible geometric limits of cyclic sequences generated by a loxodromic element in $\SO_0(1,k+1)$. We begin with a general criterion guaranteeing that the convergence is strong.

\begin{prop}\label{p.strongconvergence}
For all $n \in \bN$, let $\rho_n\co\bZ\to \sf G$ be a convex-cocompact representation from $\bZ$ into a rank 1 group ${\sf G}$. Suppose $(\rho_n)_{n \in \bN}$ converges algebraically to a discrete and injective representation $\rho_\infty$ and geometrically to $\Gamma_G$. If, up to passing to a subsequence, there exists $N \in \bN$ such that $(\rho_n(1) =\colon g_n)_{n>N}$ is loxodromic and the fixed points $(g_n^+)_{n>N}$ and $(g_n^+)_{n>N}$ limit to two distinct points, then $(\rho_n)_{n \in \bN}$ converges strongly.
\end{prop}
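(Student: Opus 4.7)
The aim is to show that $\Gamma_G \subseteq \rho_\infty(\bZ)$, since the reverse containment is automatic by Proposition \ref{subset}. The first step is to argue that $g_\infty := \rho_\infty(1)$ is loxodromic. Since $\rho_\infty$ is discrete and injective on $\bZ$, the element $g_\infty$ has infinite order in a discrete subgroup of $\sf G$, so it cannot be elliptic. By continuity of the $\sf G$-action on $\overline{\HH^{k+1}_{\mathbf F}}$, each accumulation point of $(g_n^+)_{n>N}$ and $(g_n^-)_{n>N}$ is fixed by $g_\infty$; by hypothesis these yield two \emph{distinct} boundary fixed points, which rules out $g_\infty$ being parabolic. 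Hence $g_\infty$ is loxodromic, and its translation length $\ell_\infty := \ell(g_\infty)$ is strictly positive. By continuity of $\ell$ on $\sf G$ one has $\ell(g_n) \to \ell_\infty$, so in particular $\ell(g_n) \geq \ell_\infty/2 > 0$ for $n$ large enough.

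Next, let $h \in \Gamma_G$. By the definition of Chabauty convergence there exists a sequence $h_n \in \rho_n(\bZ)$ with $h_n \to h$, and we may write $h_n = g_n^{k_n}$ for some $k_n \in \bZ$. Fix any basepoint $p \in \HH^{k+1}_{\mathbf F}$. Since $h_n \to h$, the displacement $d(h_n p, p)$ converges to $d(hp, p) < \infty$, so is uniformly bounded, say by a constant $C$. Because $g_n$ is loxodromic,
$$|k_n|\,\ell(g_n) \;=\; \ell(g_n^{k_n}) \;\leq\; d(g_n^{k_n} p, p) \;\leq\; C.$$
Combining with $\ell(g_n) \to \ell_\infty > 0$ shows that $|k_n|$ is bounded. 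Extracting a subsequence along which $k_n = k$ is constant, we obtain
$$h \;=\; \lim_n g_n^{k} \;=\; g_\infty^{k} \;=\; \rho_\infty(k) \;\in\; \rho_\infty(\bZ).$$
This is the required inclusion $\Gamma_G \subseteq \rho_\infty(\bZ)$.

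The only delicate step is verifying that $g_\infty$ is loxodromic: this uses the three hypotheses in an essential way — convergence of the fixed points to two distinct boundary limits rules out parabolic behavior, while discreteness together with injectivity of $\rho_\infty$ rules out finite-order elliptic behavior and forces $g_\infty \neq \mathrm{Id}$. Once loxodromicity of $g_\infty$ is secured, the rest of the argument is a standard application of continuity of the translation length, used to bound the exponents $k_n$ and reduce geometric convergence to algebraic convergence along a constant subsequence.
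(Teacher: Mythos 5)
Your proof is correct and follows essentially the same route as the paper's: both deduce from the distinct limiting fixed points and the discreteness and faithfulness of $\rho_\infty$ that $g_\infty$ is loxodromic with positive translation length, then use the identity $\ell(g_n^{k_n})=|k_n|\,\ell(g_n)$ together with an upper bound on $\ell(g_n^{k_n})$ (you via the displacement of a basepoint, the paper via continuity of $\ell$ along the converging sequence) to bound the exponents and extract a constant subsequence. The only cosmetic difference is that you spell out the case analysis ruling out elliptic and parabolic behavior, which the paper leaves implicit.
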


\begin{proof}
Since the sequences of fixed points $(g_n^+)_{n \in \bN}$ and $(g_n^-)_{n \in \bN}$ converge to a pair of distinct points $g_\infty^+$ and $g_\infty^-$, the generator $g_\infty=\rho_\infty(1)$ of the algebraic limit  $\rho_\infty$ of the sequence fixes $g_\infty^+$ and $g_\infty^-$ as well; since by assumption $\rho_\infty$ is discrete and faithful, $\rho_\infty(1)$ necessarily has positive translation length $\ell(g_\infty) \in (0, \infty)$. Recall that $$\ell(g_\infty) := \mathrm{inf}_{x \in \XG} d_{\XG}(x, g_\infty x).$$
As noted in Proposition \ref{subset}, we know that $\rho_\infty(\bZ) \leq \Gamma_G$, thus we only have to show that any element in $\Gamma_G$ is contained in $\rho_\infty(\bZ)$. 

By definition, for any element $\gamma$ in $\Gamma_G$ there is a sequence $(m_n)_{n \in \bN}$ such that $\gamma=\lim_n \rho_n(m_n)$.  Since the translation length is additive, and, in our case of interest, the translation length of the limit is the limit of the translation lengths, we have that, for $\epsilon \in (0, \ell(g_\infty))$, there exists $N_\epsilon\in \bN$ such that for all $n > N_\epsilon$ we have
$$m_n (\ell(g_\infty)-\epsilon) < m_n \ell(g_n) = \ell(\rho_n(m_n)) < \ell(\gamma)+1.$$
Here the first inequality comes from the fact that $g_\infty = \lim_n g_n$ and the last one comes from the fact that $\gamma = \lim_n \rho_n(m_n)$. Since $(m_n)_{n \in \bN}$ is bounded, we know that $\gamma \in \rho_\infty(\bZ)$.
\end{proof}
The same argument applies for any semisimple Lie group $\sf G$ if we can guarantee that the limit is loxodromic:
\begin{prop}
Let $\sf G$ be a semisimple Lie group, and $(\rho_n\co\bZ\to \sf G)_{n\in\N}$ be a sequence of representations converging algebraically to $\rho_\infty$. If $\rho_{\infty}(1)$ is loxodromic, the convergence is strong. 
\end{prop}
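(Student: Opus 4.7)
The plan is to follow the argument of Proposition \ref{p.strongconvergence} verbatim, replacing the reasoning based on boundary fixed points by a direct use of the translation length on the symmetric space $\XG={\sf G}/{\sf K}$. Define, for every $g\in{\sf G}$,
$$\ell(g):=\inf_{x\in\XG}d_{\XG}(x,gx).$$
This function is upper semicontinuous on ${\sf G}$, being the pointwise infimum of the continuous functions $g\mapsto d_{\XG}(x,gx)$. At a loxodromic element it is strictly positive and continuous, since the infimum is attained on the translation axis of the hyperbolic part of $g$ in its Jordan decomposition, and along this axis $\ell(g)$ agrees with the norm of the Jordan projection of $g$, which varies continuously near regular semisimple elements. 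Finally, for loxodromic $g$ and $m\in\bZ$ one has $\ell(g^m)=|m|\,\ell(g)$, because $g^m$ shares its axis with $g$ and translates it by $m$ times the amount.

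With these properties at hand the argument copies the rank one case. By Proposition \ref{subset} we have $\rho_\infty(\bZ)\subseteq\Gamma_G$, so only the reverse inclusion requires proof. Fix $\gamma\in\Gamma_G$ and pick integers $(m_n)_{n\in\bN}$ with $\rho_n(m_n)\to\gamma$. Upper semicontinuity of $\ell$ gives $\limsup_n\ell(\rho_n(m_n))\leq\ell(\gamma)<\infty$, while continuity of $\ell$ at the loxodromic element $\rho_\infty(1)$ gives $\ell(\rho_n(1))\to\ell(\rho_\infty(1))>0$. Combined with the identity $\ell(\rho_n(m_n))=|m_n|\,\ell(\rho_n(1))$, this forces $(|m_n|)_{n\in\bN}$ to be bounded. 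Passing to a subsequence we may assume $m_n=m_\infty$ is eventually constant, and conclude $\gamma=\rho_\infty(m_\infty)\in\rho_\infty(\bZ)$.

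The only step requiring real attention is the continuity of $\ell$ at loxodromic elements of a general semisimple ${\sf G}$; the upper semicontinuity and the multiplicativity $\ell(g^m)=|m|\,\ell(g)$ are formal consequences of the definitions. This is however a standard Lie-theoretic fact: loxodromic elements are regular semisimple, their Jordan projection varies continuously in a neighborhood, and the translation length is the norm of this projection. Once this is granted, no new phenomenon arises beyond the rank one argument of Proposition \ref{p.strongconvergence}.
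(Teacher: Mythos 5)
Your proof is correct and follows essentially the same route as the paper, which for this general statement simply invokes the translation-length argument of Proposition \ref{p.strongconvergence}: bound $|m_n|\,\ell(\rho_n(1))=\ell(\rho_n(m_n))$ above by (roughly) $\ell(\gamma)$ and below by $|m_n|$ times a positive constant, conclude $(m_n)$ is bounded, and extract a constant subsequence. Your explicit justification of the continuity of $\ell$ at loxodromic elements via the Jordan projection is exactly the point the paper leaves implicit in the phrase ``in our case of interest, the translation length of the limit is the limit of the translation lengths,'' so nothing further is needed.
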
 

\begin{remark}\label{r.parabolic1}
More generally one can prove that if $\rho_n(1)$ is parabolic, can be written as $\rho_n(1)=p_ne_n$ for unipotent elements $p_n$ and elliptic elements $e_n$ (cfr. Section \ref{s.isom}), and the sequence $p_n$ converges to a non-zero unipotent element $p_\infty$, then the convergence is strong. On the other hand, if $\rho_n(1)$ is a parabolic which is not a unipotent element, one can create example similar to the one discussed in the previous section. See also Remark \ref{parab_rk}.
\end{remark}

Proposition \ref{conjug} allows us to use the geometric data developed in Section \ref{sec_gen_jor} to study general discrete limits and prove the following:

\begin{prop}\label{p.thatsallR}
Let $(\rho_n\co\bZ\to \SO_0(1,k+1))_{n\in\bN}$ be a sequence of convex-cocompact representations converging geometrically to a discrete group $\Gamma_G$. Then the rank of $\Gamma_G$ is at most $l+1$.
\end{prop}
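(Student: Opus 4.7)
The plan is to first note that $\Gamma_G$ is a finitely generated abelian group (abelian as a Chabauty limit of cyclic groups; finitely generated since discrete abelian groups in a Lie group are), and then to bound its torsion-free rank by $l+1$ via a case analysis on whether $\Gamma_G$ contains a loxodromic element. A key preliminary I will use repeatedly is the following: for any compact group $K$, a discrete abelian subgroup $\Delta \subset \bR^d \times K$ has torsion-free rank at most $d$, because $\Delta\cap K$ is a discrete subset of a compact group hence finite, and the projection $\pi(\Delta)\subset\bR^d$ is discrete (if $\pi(\delta_n)\to 0$, then by compactness of $K$ a subsequence of $\delta_n$ converges in $\bR^d\times K$, so is eventually constant).

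If some $\gamma \in \Gamma_G$ is loxodromic, every element of $\Gamma_G$ commutes with $\gamma$ and hence pointwise fixes its oriented axis, placing $\Gamma_G$ inside the stabilizer $\bR\times\SO(k)$ of this oriented axis; the lemma above with $d=1$ yields $\mathrm{rank}(\Gamma_G)\leq 1\leq l+1$. Otherwise $\Gamma_G$ has no loxodromic element, and discreteness also rules out infinite-order elliptic elements (these generate non-discrete cyclic subgroups of the compact stabilizer of their fixed point). Hence every infinite-order element of $\Gamma_G$ is parabolic; commuting parabolics share a unique boundary fixed point, so Proposition \ref{conjug} lets us conjugate so that $\Gamma_G\subset{\sf G}_\infty$ with every element having trivial hyperbolic part. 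After passing to a subsequence and further conjugating by a convergent sequence in $\sf K$, I normalize $g_n := \rho_n(1)$ to be well positioned (Definition \ref{d.gdata1}) with data $(\mathbf x_n, y_n, \theta_{1,n},\ldots,\theta_{l,n})$, with $\mathbf x_n$ having zero odd-indexed coordinates and $x_{2,n}\leq\cdots\leq x_{2l,n}$. Partition $\{1,\ldots,l\}$ into $B$ (indices where $x_{2j,n}$ stays bounded) and $U$ (where $x_{2j,n}\to\infty$), and by conjugating once by a fixed element of $N$ arrange $x_{2j,n}\to 0$ for $j\in B$. If $U=\emptyset$, then $g_n$ converges into $L=\bR_+\times\SO(k)$, so $\Gamma_G\subset L$ and the previous rank bound applies.

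In the remaining case $U\neq\emptyset$, fix an arbitrary convergent sequence $\rho_n(m_n)=g_n^{m_n}\to\gamma\in\Gamma_G$. By Proposition \ref{p.algdata1} the translation vector of $g_n^{m_n}$ equals $\mathbf v_n^{(m_n)}=(\lambda_n^{-m_n}I-A_n^{m_n})\mathbf x_n$. Triviality of the hyperbolic part of $\gamma$ forces $m_n y_n\to 0$. For each $j\in U$, boundedness of the $j$-th rotation block of $\mathbf v_n^{(m_n)}$ together with $x_{2j,n}\to\infty$ forces $m_n\theta_{j,n}$ to lie within $o(1)$ of an \emph{even} multiple $2\pi q_{j,n}$ of $\pi$ (an odd multiple leaves a residual term $\approx (1+e^{-m_n y_n})x_{2j,n}\to\infty$ in the even-coordinate block, which is impossible). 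Setting $\phi_{j,n}:=m_n\theta_{j,n}-2\pi q_{j,n}\to 0$, a first-order Taylor expansion as in the proof of Proposition \ref{p.converge} gives
\[
v_{2j-1}=-\lim_n \phi_{j,n}\,x_{2j,n},\qquad v_{2j}=-\beta\,c_j\quad(j\in U),
\]
where $\beta:=\lim_n m_n y_n x_{2l,n}$ and $c_j:=\lim_n x_{2j,n}/x_{2l,n}\in(0,1]$. For $j\in B$, both entries of the $j$-th translation block vanish (since $x_{2j,n}\to 0$), while $A_n^{m_n}$ on the $j$-th block converges freely to an arbitrary rotation. Thus every $\gamma\in\Gamma_G$ has the form $u_{\mathbf v}B_\infty$, with $\mathbf v$ in the subspace $W\subset\bR^k$ of vectors supported on $U$-blocks whose even-indexed $U$-coordinates are constrained to the line $\bR\cdot(c_j)_{j\in U}$ (so $\dim W = l-|B|+1$), and $B_\infty$ in a torus $T\cong(S^1)^{|B|}$ of rotations supported on $B$-blocks. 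Since $W$ and $T$ have disjoint supports, they commute, so $\Gamma_G\subset W\times T$; applying the preliminary rank bound with $d=l-|B|+1\leq l+1$ gives $\mathrm{rank}(\Gamma_G)\leq l+1$.

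The main technical obstacle is the parity constraint on $m_n\theta_{j,n}$ for $j\in U$ and the ensuing collapse of the $|U|$ even-indexed coordinates of $\mathbf v$ on $U$-blocks to a one-parameter family in the single direction $(c_j)_{j\in U}$. Without this collapse, the naive count would give a translation space of dimension $2|U|$ and fail to produce the sharp bound; it is precisely this rigidity of power sequences that distinguishes geometric limits of cyclic groups from arbitrary discrete abelian subgroups of $\SO_0(1,k+1)$ (the latter can have rank up to $k$).
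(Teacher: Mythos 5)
Your proof is correct and follows essentially the same route as the paper: conjugate to well-positioned elements, split the even fixed-point coordinates into bounded and divergent ones, and observe that the translation parts of limits of powers are confined to the span of the $l$ odd directions plus the single line $\bR\,\mathbf{x}_\infty$, with the bounded blocks contributing only compact torus factors. Your version is somewhat more explicit than the paper's (the parity constraint forcing $m_n\theta_{j,n}$ near multiples of $2\pi$, and the rank lemma for discrete subgroups of $\bR^d\times K$), and the only slip is cosmetic: $c_j=\lim_n x_{2j,n}/x_{2l,n}$ may vanish for $j<l$, but since $c_l=1$ the line $\bR\cdot(c_j)_{j\in U}$ is still one-dimensional and the count is unaffected.
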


\begin{proof}
Up to conjugating with elements $k_n\in\sf K$, which doesn't change the rank of the geometric limit (cfr. Proposition \ref{conjug}) we can assume that the loxodromic elements $\rho_n(1)$ are well positioned, and in particular always fix $\infty$. By Proposition  \ref{p.strongconvergence} we can furthermore reduce to the case that the attracting and repelling fixed point converge to the same limit $g_\infty^{\pm}$.

As a result, we can encode the element $g_n$ through its  \emph{geometric data} (as in Definition \ref{d.gdata1}), namely through the $(k+l+1)$--tuple
$$\left(\mathbf{x}_n, y_n, \theta_{1,n}, \ldots, \theta_{l,n}\right) \in \bR^k \times \bR_+\times  (\bR/2\pi \bZ)^l.$$
We can furthermore assume, up to finding better conjugating elements $k_n$, that $x_{2i-1,n}=0$ for all $i=1, \ldots, l$, as this amounts to conjugating with a suitable matrix of the form $A$, and that $x_{2,n}\leq\ldots\leq x_{2l,n}$.  

Assume first that $x_{2,n}$ (and therefore all $x_{2i,n}$) go to infinity. In this case one sees from Proposition \ref{p.converge} that every element in $\Gamma_G$ belong to the unipotent radical in the stabilizer of $g^\pm_\infty$, which, as explained in Section \ref{s.bdry}, we identify to $\bR^{k}$ associating a vector ${\bf v}\in{\bR}^k$ to the corresponding element in $N$. It follows directly from Proposition \ref{p.converge} that the only $\bf v$ that can occur belong to the $(l+1)$--dimensional subspace generated by the first $l$ odd basis vectors together with the vector ${\bf x}_\infty=\lim_n {\bf x}_n/\|{\bf x}_n\| $. As we are assuming that the geometric limit is discrete, the result follows.

In the general case, let $i$ be the maximal index such that $x_{2i,n}$ is bounded. Then the limit of $\exp(-y_n)x_j$, as $n$ goes to $\infty,$ is equal to $x_j$ for all $j\leq 2i$ and the geometric limit is contained in the product of a $i$--dimensional torus and a $(l-i+1)$--dimensional vector space. Here the torus is contained in the stabilizer of  the geodesic with endpoints the point $g_\infty^\pm$ and the point $\bf w$ with
 $$\left\{\begin{array}{ll}
w_{2j}=\lim_n x_{2j,n} &\text{ if } s=2j, 1\leq j\leq i, \\
w_{s}=0&\text{ otherwise}
\end{array}\right.$$
while the $(l-i+1)$--dimensional vector space is generated by the vectors $e_{2s}$ for $i+1\leq s \leq l$  and the vector ${\bf x}_\infty=\lim_n {\bf x}_n/\|{\bf x}_n\| $. In particular, the rank of $\Gamma_G$ is, in this case, at most $l-i+1$.
\end{proof}
\begin{Remark}\label{parab_rk}
One can run a similar analysis in the case in which $\rho_n(1)$ is parabolic, after developing an analogue theory of geometric data for parabolic elements. In this case the rank of the limit is at most $\lfloor\frac{k-1}{2}\rfloor$, and again the elliptic parts in the Iwasawa decomposition are the sole responsible for new elements in the limit.
\end{Remark}

We can now prove the main result of the section: any discrete, torsion free subgroup  satisfying the condition expressed in Proposition \ref{p.thatsallR} can be obtained as a geometric limit of a suitable sequence.

\begin{Theorem}\label{p.allarisesR}
Let $N=N_\bR$ be the unipotent radical of the stabilizer of a point $p\in\partial_\infty\HH^{k+1}_\bR$ and let $\Delta<N$ be discrete, torsion free, and of rank at most $l+1$. Then there is a sequence of convex cocompact representations $(\rho_n\co\bZ\to \SO(1,k+1))_{n\in \bN}$ converging algebraically and such that the geometric limit $\Gamma_G$ is $\Delta$. 
\end{Theorem}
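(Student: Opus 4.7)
The plan is to generalize the construction of Proposition \ref{hyp_Jor} so as to realize an arbitrary target $\Delta$, and then to extend to all lattices by density, using the closedness and conjugation invariance of the set of achievable limits established in Proposition \ref{l.dense}.

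First I would reduce to the case $\mathrm{rank}(\Delta)=l+1$. Any discrete torsion-free $\Delta'<N$ of lower rank $r$ is a Chabauty limit of the rank-$(l+1)$ lattices $\Delta'+m\bZ w_1+\cdots+m\bZ w_{l+1-r}$ with the $w_i$ chosen outside $\mathrm{span}(\Delta')$: a bounded sequence in such a lattice must have vanishing $w_i$-coefficients for large $m$. Hence by Proposition \ref{l.dense} we may assume $\mathrm{rank}(\Delta)=l+1$. Then, using conjugation invariance, I would rotate by a suitable element of $\sf K$ so that (i) a prescribed rank-$l$ sublattice $\Delta_0<\Delta$ lies in $V_1=\mathrm{span}(e_1,e_3,\ldots,e_{2l-1})$, and (ii) a coset representative $w_l$ of $\Delta/\Delta_0$ has all its even-indexed coordinates $w_l^{(2j)}$ nonzero and of the same (say negative) sign.

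I would then set $\rho_n(1)$ well-positioned with $\mathbf{x}_n=n\mathbf{x}_\infty$, where $\mathbf{x}_\infty=\sum_j\alpha_j e_{2j}$, translation length $y_n=\beta/n^{l+1}$, and rotation angles of the generalized form $\theta_{j,n}=\sum_{s=1}^{l+1}c_{j,s}/n^s$. For each power $\rho_n(n^r)$ with $r=0,\ldots,l$ to converge to a unipotent limit, Proposition \ref{p.converge} forces $c_{j,s}=2\pi M_{j,s}$ with $M_{j,s}\in\bZ$ for $s\leq l$, leaving the top-order coefficients $c_{j,l+1}=\gamma_j$ as free real parameters. A direct computation following the proof of Proposition \ref{hyp_Jor} then yields
\[
u_r = -2\pi\sum_{j=1}^l \alpha_j M_{j,r+1}\, e_{2j-1}\in V_1 \ (r<l),\qquad u_l=-\sum_{j=1}^l\bigl(\alpha_j\gamma_j\,e_{2j-1}+\beta\alpha_j\,e_{2j}\bigr).
\]

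The crux is to solve for the free parameters so that $\{u_0,\ldots,u_l\}$ is the prescribed $\bZ$-basis of $\Delta$. When $\Delta$ has rational coordinates in the chosen basis, one sets $\alpha_j=-w_l^{(2j)}/\beta$, $\gamma_j=-w_l^{(2j-1)}/\alpha_j$, and $M_{j,r+1}=-w_r^{(2j-1)}/(2\pi\alpha_j)$, choosing $\beta>0$ as a common multiple of the denominators so that every $M_{j,r+1}$ becomes an integer. Since rational lattices are Chabauty-dense among rank-$(l+1)$ lattices in $\bR^k$, Proposition \ref{l.dense} propagates the result to arbitrary $\Delta$. I expect the main obstacle to be the verification that the geometric limit is precisely $\Delta$ and not a strictly finer discrete subgroup: this will combine the rank bound of Proposition \ref{p.thatsallR} with a direct analysis, via Proposition \ref{p.converge}, showing that every subsequential limit $\lim_n\rho_n(m_n)$ is an integer combination of $u_0,\ldots,u_l$.
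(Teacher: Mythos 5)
Your construction is essentially the paper's: after normalizing $\Delta$ by an element of ${\sf K}$ and reducing to ``rational'' data via the closedness statement of Proposition \ref{l.dense}, you take well positioned loxodromic elements with fixed point $n\mathbf{x}_\infty$, translation length $\beta/n^{l+1}$ and rotation angles $\sum_s 2\pi M_{j,s}n^{-s}+\gamma_j n^{-(l+1)}$, so that the powers $\rho_n(n^r)$ converge to the prescribed generators; your computation of the limits $u_0,\dots,u_l$ via Proposition \ref{p.converge} is correct. Two genuine differences are worth recording. First, your reduction of the lower-rank case to the full-rank case (realizing a rank-$r$ group as a Chabauty limit of the rank-$(l+1)$ lattices $\Delta'+m\bZ w_1+\cdots$ and invoking Proposition \ref{l.dense}) is a clean argument where the paper only asserts that the lower-rank cases are ``entirely analogous''. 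Second, your normalization places the even part of the last generator along the fixed-point direction and lets $\beta$ clear denominators, whereas the paper puts a triangular basis of the rank-$l$ sublattice in the odd coordinates, places its diagonal entries in the fixed-point direction, and clears denominators with the auxiliary integers $D_i$ while restricting $n$ to an arithmetic progression; these are equivalent parametrizations of the same family of examples. Two details you should make explicit: the rotation achieving your condition (ii) uses that the pointwise stabilizer in $\SO(k)$ of $V_1$ acts transitively on the sphere of $V_1^{\perp}$, whose dimension $k-l$ is at least $l$; and the relevant ``rationality'' is that of the ratios $w_r^{(2j-1)}/w_l^{(2j)}$ (so that a single $\beta$ makes all $M_{j,r+1}$ integers), not of the coordinates of $\Delta$ themselves.

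The genuine gap is the last step, which you name but do not carry out: showing that $\Gamma_G$ equals $\Delta$ rather than a strictly larger discrete subgroup of the $(l+1)$-dimensional space singled out by Proposition \ref{p.thatsallR}. Proposition \ref{p.converge} by itself does not tell you which sequences $(m_n)$ make $\rho_n(m_n)$ converge, and this is where the real work lies. The paper's argument takes an alleged extra element in a strict fundamental domain $F$ for $\Delta$, writes it as $\lim_n\rho_n(m_n)$, uses Proposition \ref{l.convtoU} together with the even-coordinate (translation length) constraint to bound $m_n<D_l n^l$, expands $m_n=\sum_i m_{i,n}n^iD_i$ in ``base $n$'', and then uses the angle constraints $m_n\theta_{j,n}n<1$ inductively from the top digit down to force every digit to vanish, a contradiction. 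Some version of this digit analysis is unavoidable; without it the assertion that every subsequential limit is an integer combination of $u_0,\dots,u_l$ is unsupported, and it is precisely the technical heart of the paper's proof.
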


\begin{proof}
We will deal only with the case in which the rank of $\Delta$ is $l+1$, since the other cases are entirely analogue.  We can assume, up to conjugation in $\sf K$, that $N$ is the unipotent radical of the stabilizer of the point at infinity, and, as discussed in Section \ref{s.bdry}, we identify it with $\bR^k$. 

The first step of the proof consists of showing that, up to conjugation in $\sf K$ and applying Proposition \ref{l.dense}, we can reduce to the case in which the group $\Delta$ has a particularly simple basis. Choose generators $v_1,\ldots, v_{l+1}$ of $\Delta$. Up to conjugation in $\sf K$, we can furthermore assume that, for any $1\leq s\leq l$, the space $\langle v_1,\ldots, v_s\rangle$ is contained in the span $V_{\rm odd}^s$ of the first $s$ odd basis vectors of the standard basis and  $v_{l+1}$ is the sum of a vector $v_{l+1}^{\rm odd}\in V_{\rm odd}^l$ and a vector $v_{l+1}^{\rm even}$ in the span $V_{\rm even}^l$ of the even basis vectors. 

We will first arrange for  $v_1,\ldots, v_l$ to be in a standard form that we now define. Let $w_{i,j}\in\bR$ be such that 
$$v_{i}=-2\pi\sum_{j=1}^{i} w_{i,j}e_{2j-1}.$$ 
Thanks to the first part of Proposition \ref{l.dense}, it is enough to show that $\Delta$ arises as a geometric limit under the further assumption that $w_{i,j}\in\bQ$. Up to choosing a different basis of the same lattice,  we can assume that $w_{i,j}=b_{i,j}w_{j,j}$ with $|b_{i,j}|<1$ for $l\geq i>j\geq 1$, and restrict to the sequence of $n$ such that $nb_{i,j}\in\bZ$ for all $1\leq j < i \leq l$. In fact if $c$ is the least common multiple of the denominators of $b_{i,j}$, over all $l\geq i>j\geq 1$, this amounts to choosing $n$ in $c\bZ$. This implies that, in the orthonormal basis  $\{e_1,e_3,\ldots, e_{2l-1}\}$ for $V_{\rm odd}^l$, we have 
$$\begin{array}{rl}
v_1&=-2\pi (w_{1,1},0,\ldots,0)\\
v_2&=-2\pi (w_{1,1}b_{2,1},w_{2,2},\ldots,0)\\
\vdots\\
v_l&=-2\pi (w_{1,1}b_{l,1},w_{2,2}b_{l,2},\ldots,w_{l,l}).\\
\end{array}$$

We will now deal with the vector $v_{l+1}$. Observe, first, that, up to conjugating by an element of $\sf K$ fixing $v_1,\ldots, v_l$, we can assume that there exists $c_{l+1}\in\bQ$ such that 
$$v_{l+1}^{\rm even}=c_{l+1}(w_{1,1},\ldots, w_{l,l}).$$ 
Here the coordinates are taken with respect to the orthonormal basis $\{e_2,e_4,\ldots, e_{2l}\}$ of $V_{\rm even}^l$.
Furthermore, we let $b_{l+1,j}$ be such that  
$$v_{l+1}^{\rm odd}=-2\pi(w_{1,1}b_{l+1,1},w_{2,2}b_{l+1,2},\ldots,w_{l,l}b_{l+1,l}).$$

As a second step, we set, for every $2\leq i\leq l$, the number $d_i$ to be the least common multiple of the denominators of the rational numbers $b_{i,j}$ where $l\geq i>j\geq 1$, and define
 $$D_i=\left\{\begin{array}{ll}\prod_{k\leq i} d_k &i\geq 2\\ 1& i=0,1.\end{array}\right.$$
We then consider the sequence $(\rho_n\co\bZ\to\SO_0(1, k+1))_{n\in\bN}$ defined by $\rho_n(1) = g_n$ loxodromic element determined by the geometric data 
$ \left(\mathbf{x}_n, y_n, \theta_{1,n}, \ldots, \theta_{l,n}\right) $
such that
$$
\begin{array}{ll}
x_{2i-1,n}=0 &\forall i = 1, \ldots, l,\\
x_{2l+1,n}=0 &\text{if $k$ is odd},\\
\displaystyle{x_{2i,n}=nw_{i,i}} &\forall i = 1, \ldots, l,\\
y_n=\displaystyle{\frac{c_{l+1}}{D_l n^{l+1}}},\\
\displaystyle{\theta_{i,n}=\frac{2\pi }{D_{i-1}n^{i}}+\sum_{j=i+1}^{l+1} b_{j,i}\frac{2\pi }{D_{j-1}n^{j}}}&\forall i = 1, \ldots, l.
\end{array}
$$
We can directly compute that
$$D_rn^r y_{n} = \displaystyle{\frac{c_{l+1}}{d_{r+1}\cdots d_{l} n^{l-r+1}}},$$
and an approximate expression for $D_rn^r\theta_{j,n}$ is given by 
$$D_rn^r\theta_{j,n} = \begin{cases}
\displaystyle{b_{r+1,j}\frac{2\pi}{n}+o\left(\frac{1}{n^2}\right)}&\text{if $j\leq r$,}\\
\displaystyle{\frac{2\pi}{n}+o\left(\frac{1}{n^2}\right)}&\text{if $j=r+1$,}\\
\displaystyle{o\left(\frac{1}{n^2}\right)}&\text{if $j\geq r+2$.}
\end{cases}
$$

Thus, for $r=0,\ldots, l-1$, the limit of $\rho_n(D_rn^r)$ is the element in $N$ associated to the vector $v_{r+1}$. Furthermore, the geometric data associated to the element $\rho(D_ln^l)$ is given by
$$\left({\bf x}_n, \displaystyle{\frac{c_{l+1}}{n}}, b_{l+1,1}\frac{2\pi}{n},\ldots, b_{l+1,l}\frac{2\pi}{n}\right).$$
It follows from Proposition \ref{p.converge} that the limit of $\rho_n(D_ln^l)$ is associated to the vector $v_{l+1}$.

Lastly, it remains  to check that the geometric limit $\Gamma_G$ of the sequence $(\rho_n(\bZ))_{n\in\bN}$ is not bigger than $\Delta$. Assume by contradiction that the geometric limit $\Gamma_G$ of $(\rho_n(\bZ))_{n\in\bN}$ strictly contains $\Delta$.   We know from Proposition \ref{p.thatsallR} that any element in $\Gamma_G$ belongs to the subgroup of the unipotent radical $N$ associated to the subspace $\bR^{l+1}$ spanned by the vectors $v_1,\ldots,v_{l+1}$. We consider the strict fundamental domain $F$  for the action of $\Delta$ on $\bR^{l+1}$, given by the union of the interior of the parallelepiped determined by the vectors $v_1,\ldots, v_{l+1}$ together with the interior of all the faces containing the origin. Since, by assumption, the group $\Gamma_G$ is strictly bigger then $\Delta$, it will necessarily contain an element associated to a vector $f\in F$.

In the basis $v_1,\ldots,v_{l+1}$ we have, by definition of $F$, that 
$$f=\sum_{i=1}^{l+1} a_i v_i$$
for some $0\leq a_i<1$. Let $B_{\epsilon}$ denote a small ball around the vector $f$ chosen in such a way that, for all $i$ and every vector $w\in B_{\epsilon}$, all coordinates in the basis $v_1,\ldots,v_{l+1}$ have absolute value smaller than 1.
 
By definition of geometric limit we can write the element $g\in\Gamma_G$ associated to the vector $f$ as the limit of $\rho_n(m_n)$ for a sequence of integers $(m_n)_{n\in\bN}$.  Proposition \ref{l.convtoU} implies  that, for every $n$ big enough, $\rho_n(m_n)0\in B_{\epsilon}$. Observe that the component in the direction of $v_{l+1}^{even}$ of every vector in $B_{\epsilon}$ is smaller than 1, on the other hand such component for the point  $\rho_n(m_n)0$ is $\|x\|(e^{m_ny_n}-1)$. We deduce from this that $m_n<D_ln^{l}$. 

In particular we can write
$$m_n=\sum_{i=0}^{l-1}m_{i,n}n^iD_i$$
with $m_{i,n}<nd_{i+1}$.

Observe that, if $\rho_n(m_n)0\in B_\epsilon$, then, for every $j$, we have
$$m_n\theta_ {j,n}n < 1.$$ This equation for  $j=l$ yields that $m_{l-1,n}=0$.

Using the same argument we conclude, by induction on $k$,  that $m_{l-k,n}=0$ for every $k$, which leads to the desired contradiction.
\end{proof}

\section{Examples of geometric limits of free groups}\label{s.free}

The goal of the section is to show that the examples constructed in Section \ref{sec_gen_jor} also arise as restriction to cyclic subgroups of geometric limits of sequences of representations of free groups,  thus proving Theorem \ref{it.3}. We will then use this result to conclude the proof of Theorem \ref{t.INTRO1} by showing that the subgroups defined in Propositions \ref{hyp_Jor} and \ref{hyp_Jor2} correspond to the geometric limit of those representations.

For the proof we will use an idea discussed by Thurston \cite{thurston-notes}. See also Kapovich \cite{Kap}. Let $(\rho_n\co\bZ \to \sf G)_{n \in \bN}$ denote the generalized J{\o}rgensen sequence discussed in Proposition \ref{hyp_Jor} and \ref{hyp_Jor2}. We will show that, for sufficiently big $n$, there exist fundamental regions for the action of $\langle \rho_n(1) = g_n \rangle$ on $\partial \HH^k_{\mathbf F}$ containing a fixed ball $B$. This will then allow us to find an element $h \in \sf G$ such that the complement of $B$ is contained in a fundamental region for $h$. The representations $\varrho_n\co F_2 =\langle a, b\rangle \to \sf G$ such that $\varrho_n(a) = g_n$ and $\varrho_n(b) = h$ are then convex cocompact and provide our desired example. As always we will deal with the real and complex case separately.

\begin{Theorem}\label{t.Schotky_real}
There exists a sequence $\left(\varrho_n\co F_2\to \SO_0(1,k+1)\right)_{n \in \bN}$ of convex-cocompact representations such that the restriction $\varrho_n|_{\langle a\rangle}$ is the sequence of representations constructed in Proposition \ref{hyp_Jor}. 
\end{Theorem}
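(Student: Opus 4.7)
The strategy, going back to Thurston's notes, is a Schottky/ping-pong construction: take $\varrho_n(a):=g_n=\rho_n(1)$ and pick $\varrho_n(b):=h\in\SO_0(1,k+1)$ a single loxodromic element independent of $n$, so that for every large enough $n$ the pair $(g_n,h)$ generates a convex-cocompact free Schottky group of rank $2$, whose restriction to $\langle a\rangle$ is tautologically $\rho_n$.

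The first and main step is to exhibit a fixed ball $B\subset \partial\HH^{k+1}_\bR$ such that $g_n^kB\cap B=\emptyset$ for every $k\neq 0$ and every $n\geq N_0$. Working in the upper half-space model and identifying $\partial\HH^{k+1}_\bR\setminus\{\infty\}$ with $\bR^k$ via Proposition \ref{l.param}, a direct computation from Proposition \ref{p.algdata1} shows that $g_n$ acts on $\bR^k$ as the affine similarity $a\mapsto \mathbf{x}_n+\lambda_n A_n(a-\mathbf{x}_n)$, whence $g_n^k(0)=(I-\lambda_n^kA_n^k)\mathbf{x}_n$. The vectors $v_0,\ldots,v_l$ of Table \ref{t.2} are mutually orthogonal (the first $l$ live in odd coordinates, the last one in even coordinates) and have norm at least $1$, so the lattice $\Lambda:=\bigoplus_{i=0}^{l}\bZ v_i$ has covering radius $\geq 1/2$. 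Set $B:=B(0,1/4)\subset\bR^k$. To verify the claim uniformly in $n$ we split by the size of $|k|$. For $|k|\leq K$ fixed, $g_n^k\to g_\infty^k$ uniformly on $B$, and $g_\infty^kB=B+kv_0$ is disjoint from $B$ since $|v_0|=2\pi$. For $K<|k|\lesssim n^{l+1}$, write $k=\sum_{i=0}^{l}k_in^i$ in base $n$; the rotation $A_n^k$ acts on the $(2j-1,2j)$-block by angle $\alpha_{j,k}=2\pi k/n^j\bmod 2\pi$. If some $\alpha_{j,k}$ is bounded away from $0\bmod 2\pi$, then the corresponding block of $(I-\lambda_n^kA_n^k)\mathbf{x}_n$ has norm of order $\|\mathbf{x}_n\|\to\infty$, placing $g_n^k(0)$ far outside $B$. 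Otherwise all $\alpha_{j,k}$ are small, which forces the coefficients $k_j$ to stay bounded as $n\to\infty$; a Taylor expansion as in the proof of Proposition \ref{p.converge} then gives $g_n^k(0)\to\sum_jk_jv_j\in\Lambda\setminus\{0\}$, again outside $B$. Finally, for $|k|\geq cn^{l+1}$ the factor $\lambda_n^{|k|}$ forces $g_n^k(0)$ toward $\infty$ or $\mathbf{x}_n$, both outside $B$ for large $n$ since $\|\mathbf{x}_n\|\to\infty$. Hence $B$ is contained in some fundamental region $F_n$ for $\langle g_n\rangle$.

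For the second step we pick any loxodromic $h\in\SO_0(1,k+1)$, independent of $n$, with both fixed points $h^\pm$ in the interior of $B$ and with translation length so large that there exist open neighborhoods $U^\pm\subset B$ of $h^\pm$ with $h(\partial\HH^{k+1}_\bR\setminus U^-)\subset U^+$. Iterating gives $h^k(\partial\HH^{k+1}_\bR\setminus B)\subset B$ for every $k\neq 0$, so $\partial\HH^{k+1}_\bR\setminus B$ lies inside a fundamental region $F_h$ for $\langle h\rangle$. With exteriors $E_n:=\partial\HH^{k+1}_\bR\setminus F_n\subset \partial\HH^{k+1}_\bR\setminus B$ and $E_h:=\partial\HH^{k+1}_\bR\setminus F_h\subset B$, we have $E_n\cap E_h=\emptyset$; the classical Klein combination theorem then implies that $\langle g_n,h\rangle$ is a free convex-cocompact Schottky group of rank $2$. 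Setting $\varrho_n(a):=g_n$, $\varrho_n(b):=h$ yields the desired representation.

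The delicate point is the intermediate regime $K<|k|\ll n^{l+1}$ of Step 1: in that range $g_n^k$ is neither close to a power of the unipotent $g_\infty$ nor close to being fully contracted onto a fixed point of $g_n$, and the dichotomy between ``some $\alpha_{j,k}$ is large'' and ``$g_n^k(0)$ is near a lattice point of $\Lambda$'' is precisely what the multi-scale choices of translation lengths $1/n^{l+1-r}$ and rotation angles $2\pi/n^{j-r}$ recorded in Table \ref{t.1} are designed to accommodate; the quantitative input is provided by Proposition \ref{p.converge}. Everything else in the argument is standard Schottky theory.
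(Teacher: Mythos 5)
Your overall architecture is exactly the paper's: exhibit a fixed ball $B$ around the origin in $\partial\HH^{k+1}_\bR\setminus\{\infty\}\cong\bR^k$ that lies in a fundamental region for every $\langle g_n\rangle$ with $n$ large, then place a Schottky partner $h$ (independent of $n$) with both fixed points inside $B$ and play ping-pong. Your second step is essentially identical to the paper's. The difference is in the first step: the paper passes to polar coordinates centred at the finite fixed point $\mathbf{x}_n$, observes that $g_n$ acts by the dilation $r_i\mapsto r_ie^{1/n^{l+1}}$ and the rotations $\vartheta_i\mapsto\vartheta_i+2\pi/n^i$, and shows that the product region $R=\{\vartheta_i\in(-\pi-\tfrac{\pi}{n},-\pi+\tfrac{\pi}{n}),\ r_i\in(n-\tfrac12,n+\tfrac12)\}$ meets each orbit at most once: the angle constraints force $N$ to be divisible successively by $n,n^2,\dots,n^l$, and the radial constraint then forces $N=0$. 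This handles all powers $N$ in one clean cascade, with no case split on the size of $N$ and no appeal to the limiting lattice.

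There is one genuine gap in your version of this step. In the regimes $K<|k|\lesssim n^{l+1}$ and $|k|\geq cn^{l+1}$ you only track the single orbit point $g_n^k(0)$ and conclude that $g_n^kB\cap B=\emptyset$ because that point is ``far outside $B$.'' Since $g_n^k$ is an affine similarity of ratio $\lambda_n^k$, for $k>0$ the image $g_n^kB$ is a ball of radius $\lambda_n^k/4$, which can be arbitrarily large; a remote centre does not by itself prevent $g_n^kB$ from containing or meeting $B$. You must either compare the displacement of the centre with the radius $\lambda_n^k/4$ (which does work out here, because the centre moves away at rate $(\lambda_n^k-1)\|\mathbf{x}_n\|\gg\lambda_n^k$), or, more simply, use the symmetry $g_n^kB\cap B\neq\emptyset\iff g_n^{-k}B\cap B\neq\emptyset$ to reduce to $k<0$, where $g_n^k$ is a contraction and the image ball has radius at most $1/4$. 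Two smaller points: the relevant invariant of $\Lambda=\bigoplus_i\bZ v_i$ is the minimal norm of a nonzero vector (which is $\min(2\pi,\sqrt{l})\geq 1$), not the covering radius; and in the intermediate regime the assertion that small $\alpha_{j,k}$ forces bounded base-$n$ digits needs the companion observation that an unbounded digit makes the corresponding coordinate of $g_n^k(0)$ blow up like $n$ times that digit over $n$ -- this is implicit in your appeal to Proposition \ref{p.converge} but should be said. All of these are patchable, and the paper's polar-coordinate divisibility argument is a cleaner way to avoid the case analysis altogether.
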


\begin{proof}
Let $\left(\rho_n\co\bZ\to \SO_0(1,k+1)\right)_{n \in \bN}$ denote the sequence of representations constructed in  Proposition \ref{hyp_Jor}, and let $g_n = \rho_n (1)$. Recall that $g_n$ is loxodromic with fixed points $\infty$ and $\mathbf x = (0,n,0,n \ldots)$, and  its geometric data is given by
$$\left(\mathbf{x}, \frac{1}{n^{l+1}}, \frac{2\pi}{n}, \ldots, \frac{2\pi}{n^{l}}\right).$$

We will show that, for sufficiently big $n$, there exist fundamental regions for the action of $\langle \rho_n(1) = g_n \rangle$ on $\partial \HH^k_{\mathbf R}$ containing a fixed ball $B$. To do that, we introduce  polar coordinates on  $\bR^k = \partial \HH^k \setminus \{\infty\}$ centered at the fixed point $\mathbf x$: 
these are given by
$$\begin{array}{ll}
(r_1,\vartheta_1,\ldots, r_l,\vartheta_l, r_{l+1}) &\text{ if $k=2l+1$},\\
(r_1,\vartheta_1,\ldots, r_l,\vartheta_l) & \text{if $k=2l$.}
\end{array}$$
 We will discuss the case $k=2l+1$ for the rest of the proof. For the case $k=2l$, the reader can just ignore the last coordinate. The advantage of these coordinates  is that the action of $g_n$ has a particularly easy form. In particular 
 the image $g_n (r_1,\vartheta_1,\ldots, r_l,\vartheta_l, r_{l+1})$ is given by 
$$\left(r_1\exp\left({\frac1{n^{l+1}}}\right),\vartheta_1+\frac{2\pi}{n},\ldots, r_l\exp\left({\frac1{n^{l+1}}}\right),\vartheta_l+\frac{2\pi}{n^l}, r_{l+1}\exp\left({\frac1{n^{l+1}}}\right)\right).$$

We claim that the region
$$R := \left\{(r_1,\vartheta_1,\ldots, r_l,\vartheta_l, r_{l+1}) \mid  \vartheta_i\in\left(-\pi-\frac{\pi}{n},-\pi+\frac{\pi}{n}\right), \; r_i\in\left(n-\frac12,n+\frac12\right)\right\}$$ 
is contained in a fundamental domain for the action of $\langle g_n = \rho_n(1) \rangle$ on $\bR^k$. In fact, suppose by contradiction that is not the case, it means we have two points $p, q \in R$ and $N \in \bZ$ such that $\rho_n(N)p = q$. Let $(r_1^p,\vartheta_1^p,\ldots, r_l^p,\vartheta_l^p, r_{l+1}^p)$ and $(r_1^q,\vartheta_1^q,\ldots, r_l^q,\vartheta_l^q, r_{l+1}^q)$ be the coordinates of $p$ and $q$ respectively. The calculation above shows that
$$
\begin{cases}
\displaystyle{r_i^p\exp\left({\frac{N}{n^{l+1}}}\right)}=r_i^q\\
\displaystyle{\vartheta_i^p+\frac{2\pi N}{n^i}}=\vartheta_i^q \end{cases}
$$

Since the angles should be in the range determined by the region $R$, 
we deduce from the  equation on $\vartheta_1$ that there is $d_1\in\bN$ such that $N = d_1 n$, the  equation on $\vartheta_2$ then implies that $d_1$ is also necessarily divisible by $n$, and thus there exists $d_2$ such that $N=d_2n^2$. We then conclude by induction using the other equations that $N=d_{l}n^l$. On the other hand, we can see that if $N=d_ln^l$, then it will violate the constrains on the radii determined by $R$. This proves that $R$ is contained in a fundamental domain for the action of $\langle g_n\rangle$.

Furthermore, for $n$ big enough, the region $R$ contains a ball $B$ of radius $\frac13$ around the origin. Note that the origin corresponds, in the polar  coordinates defined and used above, to the point $(n,-\pi,\ldots,n,-\pi,n)$. We can then choose disjoint open balls $B_+, B_-\subset B$, and fix an hyperbolic element $h\in\SO_0(1,k+1)$  with attractive (resp. repulsive) fixed point in $B_+$ (resp. $B_-$) and such that $h\cdot B_-^c\subset B_+$, where $B_\pm^c$ is the complement of $B_\pm$ in $\partial\HH^k_\bR\setminus\{\infty\}$. For every $n$ the pair $g_n:=\rho_n(1)$ and $h$ form a Schottky pair, and thus the associated representation $\varrho_n\co F_2 = \langle a, b \rangle \to\SO_0(1,k+1)$ defined by $\varrho(a) = g_n$ and $\varrho(b) = h$ is convex cocompact.
\end{proof} 

The proof in the complex hyperbolic case is very similar. 

\begin{Theorem}\label{t.Schotky_cpx}
There exists a sequence $\left(\varrho_n\co F_2\to \SU(1,k+1)\right)_{n \in \bN}$ of convex-cocompact representations such that the restriction $\varrho_n|_{\langle a\rangle}$ is the sequence of representations constructed in  Proposition  \ref{hyp_Jor2}.
\end{Theorem}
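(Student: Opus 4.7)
The argument is meant to parallel the proof of Theorem \ref{t.Schotky_real}, transported to the Heisenberg model $\Heis_k \cong \partial\HH^{k+1}_\bC \setminus \{\infty\}$. The plan is to exhibit, for all sufficiently large $n$, a fixed open ball $B \subset \Heis_k$ around the origin that is contained in a fundamental region for $\langle g_n\rangle := \langle \rho_n(1)\rangle$, and then to select a loxodromic element $h \in \SU(1, k+1)$ making $(g_n, h)$ a Schottky pair.

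First I would work in coordinates adapted to the repulsive fixed point $(\mathbf{x}_n, 0) = ((n,\ldots,n), 0)$ of $g_n$: setting $(z', t') = u_{\mathbf{x}_n}^{-1}(z, t)$ and writing $z'_j = r'_j e^{i\vartheta'_j}$ in polar form, the action of $g_n$ conjugates to the Levi element $L_n$ and becomes
\begin{equation*}
(r'_j, \vartheta'_j, t') \;\longmapsto\; \bigl(\lambda_n r'_j,\; \vartheta'_j + (\theta_{j,n} - \phi_n),\; \lambda_n^2 t'\bigr),
\end{equation*}
with $\lambda_n = e^{y_n}$. A direct calculation then expresses $g_n^N \cdot 0$ in the original Heisenberg coordinates with $j$-th complex component equal to $n(1 - \lambda_n^N e^{iN(\theta_{j,n} - \phi_n)})$ and vertical component equal to $-n^2 \lambda_n^N \sum_j \sin(N(\theta_{j,n} - \phi_n))$.

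The central step is to show that this orbit point is uniformly bounded away from $0$ in the Heisenberg geometry as soon as $N \neq 0$. Using the explicit values of $\theta_{j,n}, \phi_n, y_n$ from Proposition \ref{hyp_Jor2}, I would run an inductive analysis through the successive angular constraints: the constraint from $\vartheta'_1$ forces $N$ to lie within bounded distance of a multiple of $n$, then $\vartheta'_2$ refines this to a multiple of $n^2$, and so on up to $\vartheta'_{k-1}$. Consequently, if $g_n^N \cdot 0$ is close to $0$ then $N$ is close to an integer multiple of $n^{k-1}$, or of $n^{k+1}$ if $|N|$ is of that order. In either case the limit of $g_n^N \cdot 0$ is a non-identity element of the geometric limit lattice $\bZ^{k+1} \subset \Heis_k$ produced in Proposition \ref{hyp_Jor2}, so by discreteness the orbit point is bounded away from $0$ by a positive distance $c_0$. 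This successive divisibility analysis is the main technical obstacle, as one must simultaneously track the lower-order angle corrections and the interplay between the rotation angles and the dilation $\lambda_n^{2N}$ acting on the vertical coordinate $t'$.

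Having established this, for any $r < c_0/4$ and all sufficiently large $n$ the ball $B := B_r(0)$ satisfies $g_n^N B \cap B = \emptyset$ for every $N \neq 0$, so $B$ is contained in a fundamental domain for $\langle g_n\rangle$. The conclusion is the standard ping-pong argument: choose disjoint balls $B_-, B_+ \subset B$ and pick a loxodromic $h \in \SU(1, k+1)$ with repulsive (resp. attractive) fixed point in $B_-$ (resp. $B_+$) sending the complement of $B_-$ into $B_+$. This makes $(g_n, h)$ a Schottky pair, so the representation $\varrho_n \co F_2 = \langle a, b\rangle \to \SU(1, k+1)$ defined by $\varrho_n(a) = g_n$ and $\varrho_n(b) = h$ is convex-cocompact and restricts to $\rho_n$ on $\langle a\rangle$.
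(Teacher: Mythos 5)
Your overall strategy --- exhibit a fixed neighbourhood of the origin in $\Heis_k$ contained in a fundamental domain for $\langle g_n\rangle$, then play ping-pong with an auxiliary loxodromic $h$ --- is exactly the paper's, and your coordinate setup and the formulas for $g_n^N\cdot 0$ are correct. The gap is in how you close the central step. After the angular analysis forces $N=d_Nn^{k-1}$, you dispose of the case $d_N\neq 0$ by asserting that the limit of $g_n^N\cdot 0$ is a non-identity element of ``the geometric limit lattice $\bZ^{k+1}$ produced in Proposition \ref{hyp_Jor2}'' and invoking its discreteness. This is circular in the paper's logical order: Proposition \ref{hyp_Jor2} only shows that the geometric limit \emph{contains} a copy of $\bZ^{k+1}$, and discreteness of that limit is Corollary \ref{c.hyp_Jor}, which the paper deduces \emph{from} Theorem \ref{t.Schotky_cpx}. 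Even if you verify directly that the explicit group $\langle g_0,\ldots,g_{k-1},g_{k+1}\rangle$ is discrete (which is possible), you still owe the proof that for $d_N\neq 0$ (with $d_N$ allowed to vary with $n$) the orbit point does not accumulate at the identity. Note that for $d_N=o(n)$ all the horizontal coordinates $n(1-\lambda^Ne^{iN(\theta_{j,n}-\phi_n)})$ do tend to $0$; the only obstruction lives in the vertical coordinate, where the computation $b(g_n^N p)=b(p)-(k+2)\,d_N\pi+o(1)$ shows the point is displaced by at least $(k+2)\pi$. That vertical computation --- which you flag as part of ``the main technical obstacle'' but do not perform --- is precisely the content of the paper's proof; the appeal to discreteness presupposes its conclusion rather than replacing it.

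Two smaller points. First, the bound $N<n^{k+1}$ already follows from the radial constraint $r_j\mapsto \lambda^N r_j$ together with the requirement that both points lie in $B$, so your alternative case ``$N$ close to a multiple of $n^{k+1}$'' does not arise. Second, you only control the orbit of the origin, whereas a fundamental domain requires $g_n^NB\cap B=\emptyset$; the paper avoids this issue by running the contradiction for an arbitrary pair $p,\,g_n^Np\in B$. Your version can be repaired by observing that $g_n^N$ acts as a Heisenberg similarity of ratio $\lambda^N$, but this needs to be said.
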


\begin{proof}
Let $\left(\rho_n\co\bZ\to \SU(1,k+1)\right)_{n \in \bN}$ denote the sequence of representations constructed in the proof of Proposition  \ref{hyp_Jor2}, and let $g_n = \rho_n (1)$. Recall that $g_n$ is a well positioned loxodromic with fixed points $\infty$ and $(\mathbf x_n; t) = (n, \ldots, n; 0)$, and its geometric data is given by 
$$\left(\mathbf{x}_n, t,  \frac{1}{n^{k+2}}, \theta_1=\frac{2\pi}{n}+\frac{2\pi}{n^{k+1}},\theta_2=\frac{2\pi}{n^2}, \ldots, \theta_{k-1}=\frac{2\pi}{n^{k-1}}, \theta_k=-\sum_{j =1}^{k-1}\frac{2\pi}{n^j}\right)$$

Given a point $p\in\Heis_k$, we denote by $(a(p),b(p))$ its standard coordinates (cfr. Section \ref{s.bdry}).
We will show that, for sufficiently big $n$, the product of a ball $B$ of radius $\frac13$ around the origin in $\bC^k$  and the interval $[-\frac13,\frac 13]$ is a subset of $\Heis_k$ that is contained in a fundamental region for the action of $\langle \rho_n(1) = g_n \rangle$ on $\partial \HH^k_{\mathbf C}$.

Assume by contradiction that there exists $p\in B$ and $N\in\bN$ such that both $p$ and $g_n^N(p)$ belong to $B$. Combining Proposition \ref{p.algdata2} and Proposition \ref{l.param}, we deduce that 
$$\begin{cases}
{\displaystyle a(g_n^N p)=\lambda_N e^{-i\phi_N} A^N (a(p)-{\bf x}_n)+ {\bf x}_n }\;\\
{\displaystyle b(g_n^N p)=\lambda_N^2b(p)+\Im\left(\lambda_N e^{-i\phi_N}\bar{\bf x}_n^TA^Na(p)-\lambda_N^2 \bar {\mathbf x}_n^T  a(p)- \lambda_N e^{-i\phi_N}(\bar{\bf x}_n^TA^N{\bf x}_n)\right)},\\
\end{cases}
$$
where $A$ is the rotational matrix associated to $g_n$, $\lambda_N={\displaystyle\frac{N}{n^{k+2}}}$, and $\phi_N={\displaystyle-\frac{\pi N}{n^{k+1}}}$.

We consider the equation on $a$ first. To simplify the notation, we consider again polar coordinates $(r_1(p),\vartheta_1(p),\ldots, r_k(p), \vartheta_k(p))$ on  $\bC^k$ centered at ${\bf x}_n$. Note that with respect to these coordinates, the origin has coordinates $(n,-\pi,\ldots,n,-\pi)$. Since the angle coordinates of any point in the ball $B$ satisfy  $\vartheta_i(p)\in\left(-\pi-\frac{\pi}{n},-\pi+\frac{\pi}{n}\right)$, and 
$$\vartheta_i(g_n^N p)=\begin{cases} 
{\displaystyle \vartheta_1 (p)+ \frac{2\pi N}{n}+\frac {3\pi N}{n^{k+1}}} & i=1\\
{\displaystyle \vartheta_i (p)+ \frac{2\pi N}{n^i}+\frac {\pi N}{n^{k+1}}}& i=2,\ldots, k-1\\
{\displaystyle \vartheta_k (p)-\sum_{j =1}^{k-1}\frac{2\pi N}{n^j}+\frac {\pi N}{n^{k+1}}} & i=k,
\end{cases}$$
we deduce, with the same argument as in the proof of Theorem \ref{t.Schotky_real}, that $N$ is necessarily a multiple of $n^{k-1}$. Furthermore, the equation on the radii
$$r_i(g_n^N p)=r_i(p)\exp\left(\frac{N}{n^{k+2}}\right)$$
implies that $N<n^{k+1}$. In fact, if $N\geq n^{k+1}$, we deduce from the previous equation that $r_i(g_n^N p)\geq r_i(p)\left(1+\frac 1n\right)$, which contradicts the assumption that both $p$ and $g_n^N p$ belong to $B$, since, for each point $q\in B$, it holds  $r_i(q)\in (n-\frac 13,n+\frac 13)$. The first angle equation, then, gives that $N=o(n^{k+1})$.

We can now consider the equation on $b$. Observe that, using the assumption $N = d_Nn^{k-1}$ and $d_N=o(n^2)$, we obtain 
$$\begin{array}{rl}
\Im(\lambda_N e^{-i\phi_N}\bar{\bf x}_n^TA^Na(p)-\lambda_N^2 \bar {\mathbf x}_n^T  a(p))&=\lambda_Nn\sum_{i=1}^k\Im ((e^{i(N\theta_i-\phi_N)}-\lambda_N)a_{i}(p)) \\
&= o(1)
\end{array}
$$

As a result the equation on $b$ can be simplified to
$$\begin{array}{ll}
b(g_n^Np)&={\displaystyle b(p)\lambda_N^2- n^2\lambda_N \left(\sum_{j=1}^k\sin\left(N(\theta_i-\phi) \right)\right)}+ o(1)\\
&={\displaystyle b(p)\exp\left(\frac{2d_N}{n^{3}}\right)- n^2\exp\left(\frac{d_N}{n^{3}}\right)
\left(\sin\left( \frac{3\pi d_N}{n^{2}}\right)+({k-1}) \sin\left( \frac{d_N\pi}{n^{2}}\right)+ o(1) \right)}\\\\
&={\displaystyle b(p)- 
({k+2}) {d_N\pi}+ o(1)},
\end{array}
$$
which is impossible, unless $d_N=0$, since both $g_n^Np$ and $p$ belong to $B$.
   
   So, for $n$ big enough, the region $R$ contains the product of a ball $B$ of radius $\frac13$ around the origin in $\bC^k$  and the interval $[-\frac13,\frac 13]$. We then conclude as in the case of the real hyperbolic space. 
\end{proof}

As a corollary we deduce extra algebraic information on the limit of the sequences considered in Propositions \ref{hyp_Jor} and \ref{hyp_Jor2}:
\begin{cor}\label{c.hyp_Jor}
The geometric limits of the sequences $(\rho_n^\bR:\bZ\to\SO_0(1,k+1))_{n\in\bN}$ and $(\rho_n^\bC:\bZ\to\SU(1,k+1))_{n\in\bN}$ in Propositions \ref{hyp_Jor}  and \ref{hyp_Jor2} are discrete and torsion free.
\end{cor}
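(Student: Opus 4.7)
The plan is to transfer the structural properties of the convex cocompact $F_2$--extensions provided by Theorems \ref{t.Schotky_real} and \ref{t.Schotky_cpx} back to the cyclic subgroups. Let $(\varrho_n\co F_2\to {\sf G})_{n\in\bN}$ denote the sequence of convex cocompact representations constructed in those theorems (for $\sf G=\SO_0(1,k+1)$ or $\sf G=\SU(1,k+1)$ respectively), so that $\varrho_n(a)=\rho_n(1)$. Since the Chabauty topology on $\mathrm{Sub}_{\sf G}$ is compact and metrizable, after passing to a subsequence we may assume $\varrho_n(F_2)$ converges geometrically to some closed subgroup $\Gamma_G^{F_2}\subset\sf G$.

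For discreteness, I would appeal to Proposition \ref{prop.2.14}: since $F_2$ is non-radical and each $\varrho_n$ is discrete (being convex cocompact with torsion-free domain), the limit $\Gamma_G^{F_2}$ is discrete. Now observe that $\rho_n(\bZ)\subset\varrho_n(F_2)$ for every $n$, so any element $g\in\Gamma_G$ of the geometric limit of the cyclic subgroups is a limit $g=\lim_n\rho_n(m_n)$ of elements lying in $\varrho_n(F_2)$; by definition of Chabauty convergence this forces $g\in\Gamma_G^{F_2}$. Hence $\Gamma_G\subset\Gamma_G^{F_2}$, and discreteness of $\Gamma_G$ follows immediately from discreteness of the ambient group.

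For the torsion-freeness, I would argue that every element of $\Gamma_G$ is actually unipotent. For bounded sequences $m_n$ (up to subsequence, constant), the limit is a power of $\rho_\infty(1)$, which is the unipotent element computed explicitly in Propositions \ref{hyp_Jor} and \ref{hyp_Jor2} (cf.\ Tables \ref{t.2} and \ref{t.4}). For unbounded sequences $m_n$, Propositions \ref{p.converge} and \ref{p.convergeC} combined with the geometric-data analysis of $\rho_n(m_n)$ used in the proofs of Propositions \ref{hyp_Jor} and \ref{hyp_Jor2} force any limit to lie in the unipotent radical $N$ of $\mathrm{Stab}_{\sf G}(\infty)$. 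Since unipotent elements of $\SO_0(1,k+1)$ and $\SU(1,k+1)$ have infinite order, $\Gamma_G$ is torsion-free.

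The main (minor) obstacle is verifying that $\Gamma_G\subset\Gamma_G^{F_2}$ cleanly; this is essentially a tautology from the Chabauty definition, but one must be careful that the \emph{geometric} limit of the cyclic subgroups may a priori be smaller than the intersection of $\Gamma_G^{F_2}$ with the closure of the cyclic sequence. The inclusion above suffices for our purpose, since we only need discreteness and torsion-freeness to be inherited, not equality of limits.
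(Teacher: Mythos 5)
Your proof is correct, and for discreteness it follows exactly the paper's route: pass to the Schottky extensions of Theorems \ref{t.Schotky_real} and \ref{t.Schotky_cpx}, invoke Proposition \ref{prop.2.14} for the $F_2$--limit, and use the (tautological, as you note) Chabauty inclusion $\Gamma_G\subset\Gamma_G^{F_2}$. Where you genuinely diverge is torsion-freeness: the paper simply inherits it from $\Gamma_G^{F_2}$, asserting that Proposition \ref{prop.2.14} gives ``discrete and torsion free'' --- even though that proposition, as stated, only asserts discreteness. Your alternative --- showing every element of $\Gamma_G$ is unipotent, by splitting into bounded $m_n$ (limits are powers of the unipotent $\rho_\infty(1)$) and unbounded $m_n$ (Propositions \ref{p.converge} and \ref{p.convergeC} apply because all coordinates of $\mathbf{x}_n$ equal $n$ and diverge, so any limit lands in $N$) --- is self-contained and actually sidesteps that small imprecision in the paper, at the cost of being specific to these particular sequences rather than to arbitrary convex cocompact $F_2$--limits. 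The only detail worth adding is that for $m_n\to-\infty$ the elements $\rho_n(m_n)$ are not well positioned (their attracting fixed point is $\mathbf{x}_n$, not $\infty$), so one should apply the convergence propositions to the inverses $\rho_n(-m_n)$ and use that $g$ is unipotent if and only if $g^{-1}$ is.
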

\begin{proof}
It follows from Proposition \ref{prop.2.14} that the geometric limit $\Gamma_G^{F_2}$ of a sequence of convex cocompact representations of free groups is discrete and torsion free. The same is thus true for the limit $\Gamma_G$ of the  sequence $\rho_n(\bZ)$ in Proposition \ref{hyp_Jor} (resp.  \ref{hyp_Jor2}), since $\Gamma_G<\Gamma_G^{F_2}$.
\end{proof}
With this at hand we can conclude the proof of Theorem \ref{t.INTRO1}:
\begin{proof}[Proof of Theorem \ref{t.INTRO1}]
We know from Corollary \ref{c.hyp_Jor} that the limit of the sequences is a discrete, free abelian group, its rank is bigger or equal than $l+1$ by Proposition \ref{hyp_Jor} (resp. $k+1$ by Proposition \ref{hyp_Jor2}) and smaller or equal than $l+1$ by Proposition  \ref{p.thatsallR} (resp. $k+1$ because 
the highest rank of a discrete abelian subgroup of ${\sf G} = \SU(1, k + 1)$ is $k + 1$). The result follows. 
\end{proof}

\bibliographystyle{amsalpha}
\bibliography{adsbib}

\end{document}